\title{\bf 
On the self-similarity of the norm one group
\\
of $\bm{p}$-adic division algebras
}
\author{
        Francesco Noseda  
        \\[0.1cm]
        Ilir Snopce 
}
\date{}
\newcommand{\bb}[1]{\mathbb{#1}}
\newcommand{\cl}[1]{\mathcal{#1}}
\newcommand{\mr}[1]{\mathrm{#1}}
\newcommand{\mfr}[1]{\mathfrak{#1}}
\newcommand{\rar}{\rightarrow}
\newcommand{\vep}{\varepsilon}
\newcommand{\les}{\leqslant}
\newcommand{\ges}{\geqslant}
\newcommand{\normal}{\trianglelefteqslant}
\newcommand{\ep}{\hfill $\square$} 
\newtheorem{lemma} {Lemma} [section]
\newtheorem{theorem} [lemma] {Theorem}
\newtheorem{corollary} [lemma] {Corollary}
\newtheorem{definition}[lemma] {Definition}
\newtheorem{remark}[lemma]{Remark}
\newenvironment{proof}{{\sc Proof:}}{
\hfill $\square$}
\newenvironment{proof2}{{\sc Proof:}}{}
\numberwithin{equation}{section}
\newtheorem{theoremx}{Theorem}
\newtheorem{conjx}[theoremx]{Conjecture}
\newtheorem{questionx}[theoremx]{Question}
\begin{document} 

\maketitle

\begin{abstract}
Let $p$ be a prime,
$D$ a finite dimensional noncommutative division $\bb{Q}_p$-algebra,
and $SL_1(D)$ the group of elements of $D$ of reduced norm $1$.
When the center of $D$ is $\bb{Q}_p$, 
we prove that no open subgroup of $SL_1(D)$ 
admits self-similar actions on regular rooted trees. 
Moreover, we prove results on $\bb{Z}_p$-Lie lattices that allow to deal
with the case
where the center of $D$ is bigger than $\bb{Q}_p$, 
and lead to the classification of the torsion-free $p$-adic analytic pro-$p$ groups $G$ 
of dimension less than $p$ with the property that all the nontrivial closed subgroups
of $G$ admit a self-similar action on a $p$-ary tree.
As a consequence, we obtain that 
a nontrivial torsion-free $p$-adic analytic pro-$p$ group $G$ 
of dimension less than $p$ is isomorphic
to the maximal pro-$p$ Galois group of a field that contains
a primitive $p$-th root of unity
if and only if all the nontrivial closed subgroups of $G$
admit a self-similar action on a regular rooted $p$-ary tree.

\end{abstract}

{
\let\thefootnote\relax\footnotetext{\textit{Mathematics Subject Classification (2020):}
primary 20E08, 20E18; secondary 11E95, 11S20, 16K20, 17B20.}
\let\thefootnote\relax\footnotetext{\textit{Key words:} self-similar group, division algebra over $p$-adic field,
$p$-adic analytic group, pro-$p$ group, $p$-adic Lie lattice, maximal pro-$p$ Galois group.}
}


\section{Introduction}

Let $K$ be a $p$-adic field, that is, a finite field extension of $\bb{Q}_p$ for some prime $p$. 
For $D$ a finite dimensional central division algebra over $K$,
the group $SL_1(D)$ of elements of $D$ of reduced norm 1 
is algebraic, in the sense that it is the group of $K$-rational points of an algebraic group,
and it has the structure of compact $p$-adic analytic group.
The groups of type $SL_1(D)$ and their associated $\bb{Q}_p$-Lie algebras $sl_1(D)$
play an important role in the classification of, respectively, simple algebraic groups and 
simple Lie algebras defined over $p$-adic fields. 
More precisely,
if $G$ is the group of $K$-rational points of an anisotropic absolutely simple
simply connected algebraic group defined over $K$
then $G$ is isomorphic to the group $SL_1(D)$ for some $D$ \cite[Proposition 6]{BTgpalgsim}. 
Analogously, if $\cl{L}$ is an anisotropic absolutely simple $K$-Lie algebra then
$\cl{L}$ is isomorphic to $sl_1(D)$ for some $D$; 
see, for instance, \cite[Theorem (2), page 6]{Schoeneberg2014}. 
Due to their prominent role, these objects have been studied thoroughly;
see, for instance, \cite{RiehmNorm1, PRcohSL1, ErsFinPreSL1, Ers2CohN1_arxiv, Schoeneberg2014}; 
see also \cite{KloSL1}.
Moreover, we point out that the groups $SL_1(D)$ have strong connections
with the Morava stabilizer groups in stable homotopy theory;
see, for instance, \cite[Appendix 2]{RavComCob} and \cite{HennDuke98}.
The main goal of the present paper is to study  the groups $SL_1(D)$
from the point of view of their self-similar actions on regular rooted trees.

Groups that admit a faithful self-similar action on some regular 
rooted tree form an interesting class that contains many important examples 
such as the Grigorchuk 2-group \cite{Gri80} and the Gupta-Sidki $p$-groups \cite{GuSi83}.
More recently, there has been an intensive study on the self-similar actions of other 
families of groups, including 
iterated monodromy groups \cite{NekSSgrp},
finitely 
generated nilpotent groups \cite{BeSi07},  arithmetic groups \cite{Ka12}, 
finite $p$-groups \cite{Su11,BaFaFeVa20}, 
groups of type $\mr{FP}_n$ \cite{KoSi20},
and $p$-adic analytic pro-$p$ groups \cite{NS2019, NSGGD22, NS2022JGT}.

We say that a group $G$ is self-similar of index ${d}$,
where $d\ges 1$ is an integer,
if $G$ admits a faithful self-similar action on
a regular rooted $d$-ary tree in such a way that the action is transitive on the first level.
We say that $G$ is self-similar if it is self-similar of 
some index $d$; see, for instance, \cite{NekSSgrp} for a general treatment of self-similar actions. 
It is known that a group is self-similar of index $d$
if and only if it admits a simple virtual endomorphism of index 
$d$.
A {virtual endomorphism} of $G$ of index $d$
is a group homomorphism $\varphi:D\rar G$, where $D\les G$
is a subgroup of index $d$; the virtual endomorphism $\varphi$ is said to be simple
if there are no nontrivial normal subgroups $N$ of $G$ that are $\varphi$-invariant,
that is, such that $N\subseteq D$ and $\varphi(N)\subseteq N$.
\\

Throughout the paper we let $p$ be a prime.
The following theorem proves and significantly generalizes \cite[Conjecture E]{NS2019},
and it is proved in Section \ref{proofAB}.

\begin{theoremx}
\label{tSL1nss}
Let $D$ be a finite dimensional noncommutative central division $\bb{Q}_p$-algebra,
and $G$ an open subgroup of $SL_1(D)$.
Then $G$ is not self-similar. 
\end{theoremx}

\noindent
In particular, the theorem provides families of unsolvable torsion-free
$p$-adic analytic pro-$p$
groups of arbitrarily high dimension
that do not admit a self-similar action on a $p^k$-ary tree for any positive integer $k$. 
For $k=1$ we point out that, to the extent of our knowledge,
such families were only known for solvable groups 
(cf. \cite[Proposition 2.25]{NSGGD22} and \cite[Proposition A]{NS2019}). 
Moreover, we observe that the given proof of Theorem \ref{tSL1nss} 
does not work for the case of division algebras whose center is bigger than $\bb{Q}_p$ 
(see Remark \ref{rnoncen}); for this case we will apply Lie techniques, as explained below.
In doing so, we obtain results that are interesting on their own 
(Theorems \ref{tMMLLZ_p.2}, \ref{taim2.2}, and \ref{tmainalt})
and that may be applied to prove Theorem \ref{tshssL} and Theorem \ref{conjE}.

\bigskip

In \cite{NS2019} we initiated the study of self-similar actions of $p$-adic analytic pro-$p$ groups
by exploiting the Lazard's correspondence, 
an isomorphism of categories between the category
of saturable  finitely generated pro-$p$ groups and the category of saturable $\bb{Z}_p$-Lie lattices;
see \cite[IV (3.2.6)]{Laz65} and, for instance, \cite{GSpsat} 
and \cite{GSKpsdimJGT} for more details.
We recall that the Lazard's correspondence 
restricts to an isomorphism between the category
of uniform pro-$p$ groups and the category of powerful $\bb{Z}_p$-Lie lattices;
see \cite{DixAnaProP}.
A $\bb{Z}_p$-Lie lattice is a finitely generated free $\bb{Z}_p$-module 
endowed with a structure of Lie algebra over $\bb{Z}_p$.
Let $L$ be a $\bb{Z}_p$-Lie lattice and $k\in\bb{N}$.
A virtual endomorphism
of $L$ of index $p^k$ is an algebra  homomorphism $\varphi:M\rar L$
where $M$ is a subalgebra of $L$ of index $p^k$,
where the index is taken with respect to the additive group structure.
An ideal $I$ of $L$ is said to be $\varphi$-invariant if
$I\subseteq M$ and $\varphi(I)\subseteq I$.
We say that a virtual endomorphism $\varphi$ is simple if there are no nonzero ideals $I$
of $L$ that are $\varphi$-invariant.
Finally, 
we say that $L$ is {self-similar of index} ${p^k}$ if there exists
a simple virtual endomorphism of $L$ of index $p^k$,
and we say that $L$ is self-similar if 
it is self-similar of index $p^k$ for some $k$.
\\

The following is a version of Theorem \ref{tSL1nss} for $\bb{Z}_p$-Lie lattices,
and it is proved in Section \ref{proofAB}.

\begin{theoremx}
\label{tsl1nss}
Let $D$ be a finite dimensional noncommutative central division $\bb{Q}_p$-algebra,
$\Delta$ the ring of integers of $D$, and $L$ a $\bb{Z}_p$-Lie subalgebra
 of $sl_1(\Delta)$ of finite index.
Then $L$ is not self-similar.
\end{theoremx}

\bigskip

We now turn to the case where the center of $D$ may be bigger than $\bb{Q}_p$.
More precisely,
let $D$ be a finite dimensional central division $K$-algebra,
where $K$ is a finite field extension of $\bb{Q}_p$.
Let $d$ and $e$ be the degree and ramification index of $K/\bb{Q}_p$, respectively, 
and let $n$ be the index of $D$ over $K$, that is $\mr{dim}_K(D)=n^2$.
Let $\cl{O}_K$ be the integral closure of $\bb{Z}_p$ in $K$, and
$\Delta$ be the unique maximal $\cl{O}_K$-order in $D$.
Let $SL_1(D)$ be the set of elements of 
$D$ (equivalently, of $\Delta$) of reduced norm equal to $1$,
and observe that $SL_1(D)$ is a compact $p$-adic analytic group 
of dimension $d(n^2-1)$.
Let $\mfr{p}$ be the unique maximal two-sided ideal of $\Delta$, and
for $l\in\bb{N}$ define $SL_1^l(D)=SL_1(D)\cap (1+\mfr{p}^l)$,
which is an open subgroup of $SL_1(D)$.
The following theorem is proved in Section \ref{proofC}.

\begin{theoremx}
\label{tmainalt}
Assume that $p\ges  d(n^2-1)$,
and let $m\ges 1$ be an integer. 
Then 
$SL_1^{mne}(D)$
is a uniform pro-$p$ group, and the following holds.
\begin{enumerate}
\item 
\label{tmainalt1}
$SL_1^{mne}(D)$ is not self-similar of index $p$.
\item 
\label{tmainalt2}
Assume that $n\neq 2$, and 
let $H$ be a maximal proper subgroup of $SL_1^{mne}(D)$.
Then $[H,H]=[SL_1^{mne}(D),SL_1^{mne}(D)]$.
\end{enumerate}
\end{theoremx}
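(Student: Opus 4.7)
The plan is to reduce both statements to the Lie-algebraic Theorems \ref{tMMLLZ_p} and \ref{taim2} via the Lazard correspondence. First I would observe that the hypothesis $p \ges d(n^2-1) = \dim_{\bb{Z}_p} sl_1(\Delta)$ is large enough that \cite[Proposition 4.4]{Ers2CohN1_arxiv} applies, so $SL_1^{mne}(D)$ is uniform pro-$p$ with associated powerful $\bb{Z}_p$-Lie lattice $L := p^m sl_1(\Delta)$. Moreover, since $p$ exceeds the dimension of $L$, we are in the $p$-saturable regime, and the Lazard correspondence is an equivalence of categories between saturable pro-$p$ groups and saturable $\bb{Z}_p$-Lie lattices. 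In particular, closed subgroups of $SL_1^{mne}(D)$ correspond bijectively to sub-Lie-lattices of $L$, with indices, normality, and closed commutator subgroups preserved.

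For part \ref{tmainalt1}, suppose for contradiction that there is a simple virtual endomorphism $\varphi : H \to SL_1^{mne}(D)$ with $[SL_1^{mne}(D) : H] = p$. Under Lazard this corresponds to a $\bb{Z}_p$-Lie algebra homomorphism $\tilde\varphi : M \to L$ with $M$ a sub-Lie-lattice of additive index $p$; the correspondence between closed normal subgroups and ideals forces $\tilde\varphi$ to be simple, so $L$ is self-similar of index $p$. But the hypothesis of Theorem \ref{taim2} is satisfied, since $p \ges d(n^2-1)$ rules out $(p,n) = (2,2)$ (which would force $p \ges 3$) and $(p,n) = (3,3)$ (which would force $p \ges 8$). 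This yields a contradiction.

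For part \ref{tmainalt2}, let $H \les SL_1^{mne}(D)$ be a subgroup of index $p$, and $M \subseteq L$ the corresponding sub-Lie-lattice of additive index $p$. Then $M$ contains $pL = p^{m+1} sl_1(\Delta)$ and can be written as $M = p^m N$ for some $\bb{Z}_p$-submodule $N \subseteq sl_1(\Delta)$ of index $p$, i.e.\ a maximal proper submodule. Theorem \ref{tMMLLZ_p} applies---its hypothesis holds because $n \ne 2$ gives $n \ges 3$, and then $p \ges d(n^2-1) \ges 8$ forces $(p,n) \ne (3,3)$---yielding $[N, N] = [sl_1(\Delta), sl_1(\Delta)]$. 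Multiplying by $p^{2m}$ gives $[M, M] = p^{2m}[N,N] = p^{2m}[sl_1(\Delta), sl_1(\Delta)] = [L, L]$, and translating back via Lazard gives $[H, H] = [SL_1^{mne}(D), SL_1^{mne}(D)]$. The main technical delicacy is verifying the fidelity of the Lazard correspondence for the relevant structures---particularly the identification of closed group commutator subgroups with Lie commutator sublattices, and of simple virtual endomorphisms with their Lie-algebraic counterparts---but this is standard in the $p$-saturable regime secured by the dimension hypothesis.
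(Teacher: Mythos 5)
Your overall strategy is exactly the paper's: identify $SL_1^{mne}(D)$ as uniform with associated lattice $p^m sl_1(\Delta)$, transfer self-similarity and commutator statements through the Lazard correspondence, and invoke Theorem \ref{taim2} for item (\ref{tmainalt1}) and Theorem \ref{tMMLLZ_p}, after rescaling by $p^{-m}$, for item (\ref{tmainalt2}); your parity checks that the hypotheses of those theorems exclude $(p,n)\in\{(2,2),(3,3)\}$ are correct, and your part (\ref{tmainalt2}) is essentially word for word the paper's argument (which cites \cite[Theorem 3.1]{NS2019} for the index-$p$ subgroup/sublattice correspondence and \cite[Theorem B]{GSpsat} for $[H,H]=[L_H,L_H]$, $[G,G]=[L_G,L_G]$).

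There is, however, a genuine gap at the boundary case $p=3$. The hypothesis $p\ges d(n^2-1)$ with $n\ges 2$ allows $p=3$ exactly when $d=1$ and $n=2$, and in that case both of your justifications fail as stated: Ershov's \cite[Proposition 4.4]{Ers2CohN1_arxiv} is quoted only for $p\ges 5$, so it does not by itself give uniformity of $SL_1^{3m\cdot 2}(D)$ or identify its lattice as $p^m sl_1(\Delta)$; and since $\mr{dim}(G)=d(n^2-1)=3=p$, your claim that ``$p$ exceeds the dimension of $L$'' is false, so the saturability/subgroup-correspondence machinery you describe (which needs dimension strictly less than $p$, or the uniform/powerful setting) is not automatic from a dimension count. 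The paper deals with this case separately, citing \cite[Section 2]{KloSL1} for the fact that the congruence subgroups of the norm-one group of the quaternion algebra over $\bb{Q}_3$ are uniform with the expected associated lattices; once that is in place, item (\ref{tmainalt1}) for $(p,n)=(3,2)$ follows from Theorem \ref{taim2} as you say (and item (\ref{tmainalt2}) is unaffected since it assumes $n\neq 2$). You also skip the trivial case $n=1$, where $d(n^2-1)=0$ and the group is trivial; this is harmless but should be noted since your dimension-based setup degenerates there. Finally, for item (\ref{tmainalt1}) you rebuild the transfer of a simple virtual endomorphism by hand from a claimed bijection between all closed subgroups and all sublattices; that bijection in this generality is more than is true or needed, and the clean route (taken by the paper) is to quote \cite[Proposition A]{NS2019}, which does precisely this transfer for uniform groups.
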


\bigskip 

Theorem \ref{tSL1nss} and Theorem \ref{tmainalt}
provide evidence for the following conjecture.

\begin{conjx}
\label{conjsl1_v2}
Let $D$ be a finite dimensional noncommutative division $\bb{Q}_p$-algebra,
and $G$ an open subgroup of $SL_1(D)$. 
Then $G$ is not self-similar.
\end{conjx}

\bigskip 

A finitely generated pro-$p$ group $G$ is said to be 
{strongly hereditarily self-similar of index ${p}$}
if $G$ and all of its nontrivial 
finitely generated closed subgroups are self-similar of index 
$p$.
The groups that are strongly hereditarily self-similar of index $p$
were classified inside the class of solvable torsion-free $p$-adic analytic pro-$p$ groups
of dimension strictly less than $p$ \cite[Theorem A]{NSGGD22}.
In an analogous fashion to what is done for groups, we say that
a $\bb{Z}_p$-Lie lattice $L$ is {strongly hereditarily self-similar of index} ${p}$
if $L$ and all of its nonzero subalgebras are self-similar of index $p$.
For $p\ges 3$, the Lie lattices that are  {strongly hereditarily self-similar of index} ${p}$
were classified inside the class of solvable $\bb{Z}_p$-Lie lattices, without restriction
on the dimension \cite[Theorem 2.34]{NSGGD22}.
In this paper we show that the aforementioned classifications work over much wider classes, 
by not restricting to solvable groups and Lie lattices.

\begin{theoremx}
\label{tshssL}
Assume $p\ges 5$,
let $L$ be a $\bb{Z}_p$-Lie lattice, and define $d=\mr{dim}(L)$.
Then $L$ is strongly hereditarily self-similar of index $p$ if  and only if
$L$ is isomorphic to one of the following Lie lattices:
\begin{enumerate}
\item 
the abelian Lie lattice $\bb{Z}_p^d$, with $d\ges 1$;
\item 
the metabelian Lie lattice $\bb{Z}_p\ltimes \bb{Z}_p^{d-1}$,
where the canonical generator of $\bb{Z}_p$ acts on $\bb{Z}_p^{d-1}$
by multiplication by the scalar $p^s$, with $d\ges 2$ and $s\in\bb{N}$. 
\end{enumerate}
\end{theoremx}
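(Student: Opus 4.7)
The proof splits into the two directions of the biconditional.

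For ``$\Leftarrow$'' the two families are solvable (case~(1) is abelian, case~(2) is metabelian), and they coincide with the list of strongly hereditarily self-similar of index $p$ Lie lattices inside the class of solvable $\mathbb{Z}_p$-Lie lattices produced by \cite[Theorem~2.34]{NSGGD22}. Hence this direction is immediate from the cited result.

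For ``$\Rightarrow$'', let $L$ be strongly hereditarily self-similar of index $p$. By \cite[Theorem~2.34]{NSGGD22} it suffices to show that $L$ is solvable; the stated classification then follows. Assume for contradiction that $L$ is non-solvable, and set $\mathfrak{g}:=L\otimes_{\mathbb{Z}_p}\mathbb{Q}_p$. A Levi decomposition of $\mathfrak{g}$ furnishes a nonzero semisimple $\mathbb{Q}_p$-Lie subalgebra $\mathfrak{l}\subseteq\mathfrak{g}$; the sub-Lie-lattice $L\cap\mathfrak{l}$ inherits strong hereditary self-similarity of index $p$, so replacing $L$ by $L\cap\mathfrak{l}$ we may assume $\mathfrak{g}$ is semisimple.

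The plan is to locate inside $L$ a sublattice isomorphic to $p^msl_1(\Delta)$ for some central division $K$-algebra $D$ over a finite extension $K$ of $\mathbb{Q}_p$ and some $m\in\mathbb{N}$, and then to apply Theorem~\ref{taim2}: the hypothesis $(p,n)\notin\{(2,2),(3,3)\}$ holds automatically when $p\geq 5$, so the cited theorem yields that $p^msl_1(\Delta)$ is not self-similar of index $p$, contradicting strong hereditary self-similarity of $L$. To locate such a sublattice, decompose $\mathfrak{g}$ into $\mathbb{Q}_p$-simple ideals and regard each as an absolutely simple Lie algebra over its centroid (a finite extension of $\mathbb{Q}_p$). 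If some such simple factor is anisotropic over its centroid, then it is of the form $sl_1(D)$, and clearing denominators realizes $p^msl_1(\Delta)$ as a subalgebra of $L$, which completes the argument in this case.

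The main obstacle is the isotropic case, in which every $\mathbb{Q}_p$-simple factor of $\mathfrak{g}$ is isotropic over its centroid. Because every isotropic simple factor of classical type contains a three-dimensional split subalgebra of type $sl_2(K)$ over its centroid $K$, this reduces to exhibiting a $\mathbb{Z}_p$-order of $sl_2(K)$ that is not self-similar of index $p$. Here one cannot directly invoke the analog of Theorem~\ref{tMMLLZ_p}: in $sl_2(K)$ the maximal proper $\mathbb{Z}_p$-submodules corresponding to Borel subalgebras fail the identity $[N,N]=[L,L]$, so the short route through commutators is unavailable. The residual split case must therefore be settled by a direct virtual-endomorphism analysis of the full-rank $\mathbb{Z}_p$-orders of $sl_2(K)$, producing a concrete order that admits no simple virtual endomorphism of index $p$; this constitutes the technical heart of the proof and follows the blueprint, but not the letter, of the proof of Theorem~\ref{taim2}.
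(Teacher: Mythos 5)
Your reduction (Levi decomposition, pass to a simple $\bb{Q}_p$-subalgebra, then split into an anisotropic case handled by Theorem~\ref{taim2} and a residual ``split'' case) has the same overall shape as the paper's proof, and the anisotropic branch via Schoeneberg's classification and Theorem~\ref{taim2} is exactly right. The gap is in how you handle the isotropic case, which you correctly flag as incomplete but which you then significantly overcomplicate.

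First, the paper does not parametrize the dichotomy by isotropy over the centroid $K$: it asks whether the simple $\bb{Q}_p$-subalgebra $\cl{M}$ contains a nonzero nilpotent element. When it does, Jacobson--Morozov is applied \emph{over $\bb{Q}_p$}, not over $K$, yielding an $sl_2$-triple $e,f,h\in\cl{M}$ with $\bb{Q}_p$-coefficients; the $\bb{Z}_p$-span of $e,f,h$ is then isomorphic to $sl_2(\bb{Z}_p)$, not to a $\bb{Z}_p$-order of $sl_2(K)$ for an extension $K/\bb{Q}_p$. So the ``split'' subalgebra one must handle is always the three-dimensional $sl_2(\bb{Z}_p)$, independently of the centroid. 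Second --- and this is the real missing ingredient --- the non-self-similarity of suitable subalgebras of $p^m sl_2(\bb{Z}_p)$ was already established in \cite[Proposition 2.41]{NSGGD22}; the proof of Theorem~\ref{tunsnshss} simply cites it. You are right that one cannot argue via $[N,N]=[L,L]$ for all index-$p$ submodules of $sl_2(\bb{Z}_p)$ (that identity fails there), but the ``direct virtual-endomorphism analysis'' you propose as the technical heart is not new work to be done in this paper: it is a reference. As written, your proof has a genuine hole in the split case; the fix is to run Jacobson--Morozov over $\bb{Q}_p$ and invoke \cite[Proposition 2.41]{NSGGD22}, after which the argument closes.
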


\bigskip

The following theorem proves \cite[Conjecture E]{NSGGD22}.

\begin{theoremx}
\label{conjE}
Let $p$ be a prime and 
$G$ a torsion-free $p$-adic analytic pro-$p$ group. 
Define $d=\mr{dim}(G)$, and assume that $p>d$. 
Then $G$ is strongly hereditarily self-similar of index $p$ if and only if 
$G$ is isomorphic to one of the following groups:
  \begin{enumerate}
  \item 
  the abelian pro-$p$ group $\mathbb{Z}_p^d$, with $d\ges 1$;
  \item  
  the metabelian pro-$p$ group
  $\bb{Z}_p\ltimes \bb{Z}_p^{d-1}$, 
  where
  the canonical generator of 
  $\bb{Z}_p$ acts on $\bb{Z}_p^{d-1}$
  by  multiplication by the scalar $1+p^s$,
  with $d\ges 2$ and integer $s\ges 1$. 
  \end{enumerate} 
\end{theoremx}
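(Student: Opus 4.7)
The plan is to translate the problem to the Lie-theoretic setting via the Lazard correspondence, apply Theorem~\ref{tshssL}, and translate the classification back to groups.

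Since $G$ is a torsion-free $p$-adic analytic pro-$p$ group of dimension $d<p$, it is saturable, so the Lazard correspondence associates to it a $\bb{Z}_p$-Lie lattice $L$ of dimension $d$. Every closed subgroup of $G$ is again a torsion-free $p$-adic analytic pro-$p$ group of dimension at most $d<p$, hence also saturable, and corresponds under Lazard to a subalgebra of $L$; moreover this correspondence preserves indices of sub\-groups/sub\-algebras and matches normal subgroups with ideals. It follows, via the virtual-endo\-morphism characterization of self-similarity, that $G$ is self-similar of index $p$ if and only if $L$ is; applying this to $G$ and to each of its nontrivial finitely generated closed subgroups, $G$ is strongly hereditarily self-similar of index $p$ if and only if $L$ is.

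I would then invoke Theorem~\ref{tshssL} (valid when $p\ges 5$; the remaining cases $p\in\{2,3\}$ together with $p>d$ force $d\in\{1,2\}$, which can be treated by a direct inspection) to conclude that $L$ is isomorphic to either the abelian lattice $\bb{Z}_p^d$ or the metabelian lattice $\bb{Z}_p\ltimes\bb{Z}_p^{d-1}$ whose canonical generator acts by the scalar $p^s$ for some $s\in\bb{N}$. In the metabelian case, saturability of $L$ requires $[L,L]\subseteq pL$ for $p$ odd, so $s\ges 1$.

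The abelian lattice corresponds to the abelian pro-$p$ group $\bb{Z}_p^d$. For the metabelian lattice with bracket $[x,y]=p^sy$ and $s\ges 1$, the Baker--Campbell--Hausdorff formula yields a group law in which the generator corresponding to $x$ acts on $\bb{Z}_p^{d-1}$ by the scalar $\exp(p^s)$, which lies in $1+p^s\bb{Z}_p^\ast$. Since the isomorphism class of $\bb{Z}_p\ltimes_\alpha\bb{Z}_p^{d-1}$ with $\alpha\in 1+p\bb{Z}_p$ is determined by the $p$-adic valuation of $\alpha-1$, the resulting group is isomorphic to the one in item~(2) of the statement with $\alpha=1+p^s$.

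The main obstacle will be to verify carefully that strong hereditary self-similarity of index $p$ transfers faithfully between $G$ and $L$ under Lazard: this requires confirming that the equivalence carries simple virtual endomorphisms to simple virtual endomorphisms (with matching indices and matching $\varphi$-invariant normal subgroups/ideals), and that the hypothesis $p>d$ brings every closed subgroup of $G$ into the scope of the correspondence. A secondary technical subtlety is the bookkeeping that turns the Lie-theoretic scalar $p^s$ into the group-theoretic scalar $1+p^s$.
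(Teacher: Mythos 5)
Your approach is the same as the paper's: pass to the Lie lattice via the Lazard correspondence (valid since $p>d$), apply Theorem~\ref{tshssL}, and translate back, handling the small cases by the solvable classification. The paper handles what you flag as the ``main obstacle''---that strong hereditary self-similarity of index $p$ transfers between $G$ and its associated lattice $L$---by citing \cite[Proposition 3.1]{NSGGD22} (enabled by \cite[Remark 3.2]{NSGGD22}), and it treats the $d\les 2$ case by appealing directly to \cite[Theorem A]{NSGGD22} rather than by ad hoc inspection.

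One step in your write-up is actually wrong as stated: you assert that ``saturability of $L$ requires $[L,L]\subseteq pL$ for $p$ odd'' and use this to get $s\ges 1$. Saturability does \emph{not} imply powerfulness; for instance the Heisenberg Lie lattice $L=\bb{Z}_p x\oplus\bb{Z}_p y\oplus\bb{Z}_p z$ with $[x,y]=z$ is the saturable lattice of the torsion-free Heisenberg pro-$p$ group for $p\ges 5$, yet $[L,L]=\bb{Z}_p z\not\subseteq pL$. The conclusion $s\ges 1$ is nonetheless correct, but for a different reason: a saturable $\bb{Z}_p$-Lie lattice arising from a pro-$p$ group is residually nilpotent (\cite[Theorem B]{GSKpsdimJGT}), whereas the lattice $L^d(0)$ with scalar $p^0=1$ has $\gamma_2=\gamma_3=\bb{Z}_p^{d-1}\neq 0$, so it fails residual nilpotence. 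This is exactly the argument the paper gives. Your translation of the Lie scalar $p^s$ into the group scalar $1+p^s$ via the BCH formula is fine and matches \cite[Remark 3.6]{NSGGD22}.
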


\noindent 
The proofs of Theorem \ref{tshssL} and Theorem \ref{conjE} are given in Section \ref{proofGHI}.
The proof of Theorem \ref{conjE} relies on Theorem \ref{tshssL} and \cite[Proposition 3.1]{NSGGD22},
the latter being the key to exploit Lie methods.
We point out that, for the proof of Theorem \ref{tshssL},
one applies classical results
on Lie algebras over fields of characteristic zero to reduce the problem
to the study of the $\bb{Z}_p$-Lie lattices associated with the finite dimensional 
division $\bb{Q}_p$-algebras introduced above;
then, one can apply Theorem \ref{taim2.2}.

\bigskip

The groups that appear in Theorem \ref{conjE} have been object of study
in the past few decades,
since they are connected with several topics, such as,
maximal pro-$p$ Galois groups of fields \cite{Ware92}, 
pro-$p$ groups that have constant generating number 
on open subgroups \cite{KSjalg11},
Bloch-Kato pro-$p$ groups \cite{Quad14},
hereditarily uniform pro-$p$ groups
\cite{KSquart14},
and Frattini-injective pro-$p$ groups
\cite{ST2020frattini}. 
With the aid of Theorem \ref{conjE} we can prove 
a rather strong generalization of \cite[Theorem 3.9]{NSGGD22}.
The proof of Theorem \ref{thBext} is given in Section \ref{proofGHI}.

\begin{theoremx}
\label{thBext}
Let $G$ be a nontrivial 
torsion-free $p$-adic analytic pro-$p$ group, and assume that $p > \textrm{dim}(G)$. 
Then the following are equivalent. 
\begin{enumerate}
\item 
\label{peqc.1}
$G$ is strongly hereditarily self-similar of index $p$.

\item 
\label{peqc.2}
$G$ is isomorphic to the maximal pro-$p$ Galois group of some 
field that contains a primitive $p$-th root of unity.
\item  
\label{peqc.3}
$G$ has constant generating number on open subgroups.

\item 
\label{peqc.4}
$G$ is a Bloch-Kato pro-$p$ group.

\item  
\label{peqc.5}
$G$ is a hereditarily uniform pro-$p$ group.
\item 
\label{peqc.6}
$G$ is a Frattini-injective pro-$p$ group.
\end{enumerate} 
\end{theoremx}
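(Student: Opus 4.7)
My plan is to use Theorem \ref{conjE} as the hub: it explicitly identifies the class of groups in (\ref{peqc.1}) with the two families (a) $\bb{Z}_p^d$ and (b) $\bb{Z}_p\ltimes \bb{Z}_p^{d-1}$ with the canonical generator acting by $1+p^s$, $s\ges 1$. So the strategy is to show that, under the hypothesis $p>\mr{dim}(G)$, each of the properties (\ref{peqc.2})--(\ref{peqc.6}) characterizes exactly the same two families, from which the six-way equivalence follows.

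First I would rewrite the problem as showing that each of (\ref{peqc.2})--(\ref{peqc.6}) is equivalent to $G$ being isomorphic to one of the groups listed in Theorem \ref{conjE}. For the easier direction, I would check by direct computation that both families (a) and (b) possess each of the properties (\ref{peqc.2})--(\ref{peqc.6}); these verifications are standard for abelian and metabelian groups of this explicit form (and, for (\ref{peqc.2}), they are realized as maximal pro-$p$ Galois groups of local fields containing a primitive $p$-th root of unity by classical work of Ware \cite{Ware92} and Labute). For the converse direction I would invoke the known classification results already cited in the paper: Klopsch--Snopce \cite{KSjalg11, KSquart14} give (\ref{peqc.3})$\Rightarrow$family and (\ref{peqc.5})$\Rightarrow$family in the torsion-free $p$-adic analytic setting under $p>\mr{dim}(G)$; Quadrelli \cite{Quad14} gives (\ref{peqc.4})$\Rightarrow$family; Snopce--Tanushevski \cite{ST2020frattini} gives (\ref{peqc.6})$\Rightarrow$family; and for (\ref{peqc.2}) the combination of Ware's theorem with the dimension restriction forces $G$ into the same list. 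The implication (\ref{peqc.1})$\Rightarrow$family is precisely Theorem \ref{conjE}.

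To assemble the full equivalence, I would organize the argument as a cycle passing through the explicit family, so the structure is: (\ref{peqc.1})$\Leftrightarrow$family by Theorem \ref{conjE}, and (\ref{peqc.2})$\Leftrightarrow$family, $\ldots$, (\ref{peqc.6})$\Leftrightarrow$family by the cited characterizations together with the explicit verifications. Since the pre-existing generalization \cite[Theorem 3.9]{NSGGD22} already established the equivalence of (\ref{peqc.2})--(\ref{peqc.6}) under the assumption that $G$ be solvable, the real content contributed here is removing the solvability assumption. This is precisely what Theorem \ref{conjE} allows: it shows unconditionally that (\ref{peqc.1}) forces $G$ into the list, which in particular makes $G$ (meta)abelian, hence solvable, so the prior equivalences apply.

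The main obstacle is making sure the dimension hypothesis $p>\mr{dim}(G)$ is consistently invoked where it is needed to rule out non-solvable groups in characterizations (\ref{peqc.2})--(\ref{peqc.6}); some of the cited results are stated with slightly different hypotheses, so I would need to check that the dimension bound suffices to quote each of them, or alternatively route the non-solvable cases through Theorem \ref{conjE} (equivalently, through Theorem \ref{tshssL}) by first deducing enough self-similarity of subgroups from the given property and then appealing to the classification. Once this bookkeeping is done, the six-way equivalence drops out.
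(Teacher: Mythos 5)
Your overall architecture matches the paper's: both arguments pivot on the explicit family (abelian $\bb{Z}_p^d$ or $\bb{Z}_p\ltimes\bb{Z}_p^{d-1}$ with action by $1+p^s$), quoting Theorem \ref{conjE} for condition (\ref{peqc.1}) and the results of \cite{KSjalg11}, \cite{KSquart14}, \cite{ST2020frattini} for (\ref{peqc.3}), (\ref{peqc.5}), (\ref{peqc.6}), and Quadrelli \cite{Quad14} for (\ref{peqc.4}). The genuine gap is in your treatment of the Galois condition (\ref{peqc.2}). You assert that ``Ware's theorem with the dimension restriction forces $G$ into the same list,'' but the hypothesis $p>\mr{dim}(G)$ plays no role in any such Galois-theoretic statement, and Ware's results do not by themselves handle a possibly non-solvable maximal pro-$p$ Galois group. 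The ingredient you are missing is the Norm Residue Theorem (Rost--Voevodsky), which is exactly what the paper invokes: a maximal pro-$p$ Galois group of a field containing a primitive $p$-th root of unity is Bloch--Kato, giving (\ref{peqc.2})$\Rightarrow$(\ref{peqc.4}); then (\ref{peqc.4})$\Rightarrow$family follows from \cite[Theorem B, Proposition 3.4]{Quad14} together with the fact that a $p$-adic analytic pro-$p$ group contains no nonabelian free pro-$p$ subgroup (plus a separate trivial argument for $p=2$, where $p>d$ forces $G\simeq\bb{Z}_2$). Without routing (\ref{peqc.2}) through the Bloch--Kato property in this way, your cycle does not close.

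A second, smaller inaccuracy: for the converse direction family$\Rightarrow$(\ref{peqc.2}) you claim the groups are realized as maximal pro-$p$ Galois groups of \emph{local} fields. That is false as stated: for instance $\bb{Z}_p^d$ with $d\ges 3$ is not the maximal pro-$p$ Galois group of any finite extension of $\bb{Q}_\ell$ (those are free pro-$p$ or Demushkin). The realization over \emph{some} field containing a primitive $p$-th root of unity is what is needed, and the paper obtains it from \cite[Theorem A, Theorem B]{NSGGD22}, using that the groups in the family are solvable. Finally, note that your remark that Theorem \ref{conjE} ``removes the solvability assumption'' only helps for implications starting from (\ref{peqc.1}); to start from (\ref{peqc.2})--(\ref{peqc.6}) you must quote characterizations that are themselves unconditional, which is fine for (\ref{peqc.3}), (\ref{peqc.5}), (\ref{peqc.6}), but for (\ref{peqc.2}) again requires the Norm Residue Theorem route described above.
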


\bigskip

Finally,
we say that a finitely generated profinite group is 
hereditarily self-similar of index $d$ if all
of its open subgroups are self-similar of index $d$. 
In \cite[Section 4]{NSGGD22} a few problems on the subject were raised;
we close the introduction by adding another relevant question.

\begin{questionx}
Is there an unsolvable compact $p$-adic analytic group which
is hereditarily self-similar of some index?
\end{questionx}

\vspace{5mm}

\noindent
\textbf{Notation.} 
The set $\bb{N}=\{0,1,2,...\}$ of natural numbers is assumed to contain $0$.
Throughout the paper we let $p$ be a prime,
and we denote by $\bb{Z}_p$ the ring of $p$-adic integers, by $\bb{Q}_p$ the field
of $p$-adic numbers, by $\bb{F}_p$ the residue field of $\bb{Z}_p$,
and by $v_p:\bb{Q}_p\rar \bb{Z}\cup\{\infty\}$ the $p$-adic valuation. 
The expression $x\equiv_a y$ means that $x$ is equivalent to $y$ modulo $a$,
where the context where the equivalence is applied will always be clear.
The characteristic of a field $K$ is denoted by $\mr{ch}(K)$.
When $R$ is a commutative ring, 
we denote by $R^*$ the associated multiplicative group;
moreover, when $L$ is an $R$-Lie algebra
and $M$ is an $R$-submodule of $L$, $[M,M]$ denotes the
set of finite sums of brackets of elements of $M$,
and we observe that $[M,M]$ coincides with the 
$R$-submodule of $L$ generated by such brackets.

\vspace{5mm}

\noindent
\textbf{Acknowledgments.}
The authors thank Gopal Prasad for feedback on a question on division algebras. 
The first author also thanks José J. Ramón Marí for a technical suggestion.


\section{Proof of Theorem \ref{tSL1nss} and Theorem \ref{tsl1nss}}
\label{proofAB}

\textbf{Proof of Theorem \ref{tSL1nss}}.
Let $H$ be an open subgroup of $G$, 
and let $\varphi : H\rar G$ be a group homomorphism.
We have to show that there exists a nontrivial normal subgroup $N$
of $G$ such that $N\subseteq H$ and $\varphi(N)\subseteq N$.
We first prove the claim that the restriction $\varphi_U$ of $\varphi$ to an appropriate 
open subgroup $U$ of $H$ is either injective or it has an open kernel. 
Indeed, let $U$ be an open uniform pro-$p$ subgroup of $H$,
and assume that $\mr{ker}(\varphi_U)$ is not open. 
Observe that the $\bb{Q}_p$-Lie algebra associated with $U$
is canonically isomorphic to $sl_1(D)$,
which is a simple $\bb{Q}_p$-Lie algebra; see Remark \ref{rsimple}.
Moreover, the dimension $r$ of $\mr{ker}(\varphi_U)$ is strictly less than $\mr{dim}(U)$.
If $r=0$ then $\mr{ker}(\varphi_U)$ is trivial, since $U$ is torsion free.
If $r>0$ then we get a contradiction as follows. 
Let $V$ be a characteristic open uniform pro-$p$ subgroup of $\mr{ker}(\varphi_U)$.
Then the $\bb{Q}_p$-Lie algebra associated with $V$ gives a nontrivial proper ideal of
$sl_1(D)$, a contradiction, and the claim is proved. 
Now, we proceed by considering the two cases.
First, assume that $\mr{ker}(\varphi_U)$ is open.
Since $G$ is profinite,
there exists $N\normal_o G$ such that $N\subseteq \mr{ker}(\varphi_U)$.
Since $D$ is noncommutative, the groups involved have strictly positive dimension,
so that  $N$ is nontrivial; moreover, 
$N \subseteq H$ and $\varphi(N)=\{1\}\subseteq N$.
Now, assume that $\varphi_U$ is injective. 
Then $\varphi_U$ is a virtual automorphism of $G$,
and we can consider its class in the commensurator of $G$.
By \cite[Lemma XI.14]{KGPlinearppg} and \cite[Theorem 3.12]{BEW2011},
there exist $U'\les_o U$ and $g\in D^*$ 
such that, for all $x\in U'$, we have $\varphi(x)=gxg^{-1}$.
There exists $m\in\bb{N}$ such that $N:= SL_1^m(D)\subseteq U'$,
and we observe that $N$ is normal in $D^*$.
Hence, $N$ is a nontrivial normal subgroup of $G$ and $N\subseteq H$.
Moreover, for all $x\in N$, we have $\varphi(x)= gxg^{-1}\in
gNg^{-1}= N$, so that $\varphi(N)\subseteq N$. \ep
\\

\noindent 
\textbf{Proof of Theorem \ref{tsl1nss}}.
Let $\varphi :M\rar L$ be a virtual endomorphism of $L$. 
We have to show that $\varphi$ is not simple.
Observe that we have identifications
$M\otimes_{\bb{Z}_p}\bb{Q}_p=sl_1(D)$ 
and 
$L\otimes_{\bb{Z}_p}\bb{Q}_p=sl_1(D)$;
in particular, $L$ is just infinite 
and we can assume that $\varphi$ is injective;
see Remark \ref{rsimple}, Lemma \ref{lsjipid2}, and Lemma \ref{lsiminj2}.
We will show that there exists a nontrivial ideal $I$ of $L$ such that $I\subseteq M$ and 
$\varphi(I)\subseteq I$.
Observe that 
one can view $\varphi$ as the restriction to $M$ of an automorphism 
$\widehat{\varphi}$ of the $\bb{Q}_p$-Lie algebra $sl_1(D)$.
By \cite[Lemma XI.14]{KGPlinearppg}, there exists $a\in D^*$ such that,
for all $x\in sl_1(D)$, one has $\widehat{\varphi}(x)=axa^{-1}$. 
Since $M$ has finite index in $sl_1(\Delta)$, there exists $m\in\bb{N}$
such that $I:=p^msl_1(\Delta)\subseteq M$.
Hence, $I$ is a nontrivial ideal of $L$ that is included in $M$.
Let $x\in I$. We have to show that $\varphi(x)\in I$.
There exists $y\in sl_1(\Delta)$ such that $x = p^m y$.
We have $\varphi(x)= axa^{-1}= p^m aya^{-1}$.
By the properties of the reduced norm and trace, 
and by recalling that $\Delta=\{b\in D: N(b)\in\bb{Z}_p\}$,
where $N:D\rar \bb{Q}_p$ is the reduced norm,
one can see that $aya^{-1}$ is an element of $sl_1(\Delta)$,
so that $\varphi(x)\in I$, as desired.
\ep

\begin{remark}
\label{rnoncen}
We argue that the proofs of Theorem \ref{tSL1nss} and Theorem \ref{tsl1nss}
do not work when the center $K$ of $D$ is not $\bb{Q}_p$.
Consider the diagram 
$$
PGL_1(D) 
\rar
\mr{Aut}_{K}(sl_1(D))
\rar
\mr{Aut}_{\bb{Q}_p}(sl_1(D))
\rar
\mr{Comm}(SL_1(D)),
$$ 
where $\mr{Comm}$ denotes the commensurator,
the left arrow is the isomorphism given by \cite[Lemma XI.14]{KGPlinearppg},
the middle arrow is the inclusion, and 
the right arrow 
is the isomorphism given by \cite[Theorem 3.12]{BEW2011}.
The inclusion in the middle is in general strict
when $K\neq \bb{Q}_p$; see, for instance, \cite[Lemma XI.13]{KGPlinearppg}.
Hence, not all elements of 
$\mr{Aut}_{\bb{Q}_p}(sl_1(D))$
and 
$\mr{Comm}(SL_1(D))$
are given by conjugations by elements of $D^*$,
as it was used in the proofs.
\end{remark}


\section{Proof of Theorem \ref{tmainalt}}
\label{proofC}

The proof of Theorem \ref{tmainalt} relies on 
Theorem \ref{tMMLLZ_p.2} and Theorem \ref{taim2.2}, which are stated below
and have independent interest.
The proof of these theorems is rather long,
so that the present section is divided into several subsections.
Sections \ref{sliecyc}, \ref{sdvr}, and \ref{sedvr} 
contain general results and technical lemmas that
do not depend on the division algebra $D$ under consideration.
The structure of $D$ is recalled in Section \ref{divalg}.
The two main results of these preliminary sections, Theorem \ref{tn2} and Theorem \ref{tMMLLO_K},
are proved in Section \ref{sn2} and Section \ref{sO_K}, respectively.
Finally, Theorems  \ref{tMMLLZ_p.2}, \ref{taim2.2}, and \ref{tmainalt}
are proved in Section \ref{proofCDEss}.
\\

In the context of Theorem \ref{tmainalt} stated in the Introduction,
let $f$ be the inertia degree of $K/\bb{Q}_p$,
and let $sl_1(\Delta)$ be the set of elements of $\Delta$ of reduced trace equal to $0$.
Observe that $sl_1(\Delta)$ is a $\bb{Z}_p$-Lie lattice of dimension $d(n^2-1)$.

\begin{theorem}
\label{tMMLLZ_p.2}
Assume that $f\ges 2$ holds or both $n\ges 3$ and $(p,n)\neq (3,3)$ hold.
Let $N$ be a maximal proper $\bb{Z}_p$-submodule of $sl_1(\Delta)$.
Then $[N,N]=[sl_1(\Delta),sl_1(\Delta)]$.
\end{theorem}

\begin{theorem}
\label{taim2.2}
Assume that $f\ges 2$ or $(p,n)\not\in \{(2,2),(3,3)\}$, and
let $m\in\bb{N}$. 
Then $p^msl_1(\Delta)$ is not self-similar of index $p$.
\end{theorem}

\subsection{Lemmas in cyclic extensions}
\label{sliecyc}

For an arbitrary field $K$,
let $F/K$ be a finite cyclic Galois field extension, let $n$ be the degree of $F/K$,
and let $\vartheta$ be a generator of $\mr{Gal}(F/K)$.
Let $T:F\rar K$ be the trace map, and let $F^0=\mr{ker}(T)$.
Since $F/K$ is finite and separable,
the trace map is surjective, and the trace form,
$(\alpha,\beta)\in F\times F\mapsto T(\alpha\beta)$,
is nondegenerate.
In particular, $F^0$ is
a $K$-vector subspace of $F$ of dimension $n-1$.
We will prove some results
that will be applied, in Section \ref{sO_K},
to the residue field extension $\kappa_F/\kappa_K$ that will be introduced 
in Section \ref{divalg}.

\begin{lemma}
\label{ltaneqa}
Let $0<j<n$, and assume that $(\mr{ch}(K),n)\neq (2,2)$. 
Then there exists $\alpha\in F^0$ such that $\vartheta^j(\alpha)\neq \alpha$.
\end{lemma}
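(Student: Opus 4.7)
The plan is to translate the condition $\vartheta^j(\alpha)=\alpha$ into membership in a fixed field. Setting $E = F^{\gen{\vartheta^j}}$, Galois correspondence gives $\vartheta^j(\alpha)=\alpha$ if and only if $\alpha\in E$, so the task reduces to exhibiting $\alpha\in F^0$ with $\alpha\notin E$; equivalently, to showing $F^0 \not\subseteq E$.

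Let $d=\gcd(j,n)$. Since $\vartheta$ has order $n$, the element $\vartheta^j$ has order $n/d$, hence $[F:E]=n/d$ and $[E:K]=d$. As $0<j<n$ and $d\mid j$, one has $d\les j<n$, so $\dim_K E = d < n$. Suppose, aiming for a contradiction, that $F^0\subseteq E$. Then $n-1 = \dim_K F^0 \les d$; combined with $d\mid n$ and $d<n$, this forces $d=n-1$ and $n-1\mid n$, which can only happen when $n=2$ and $d=1$, in which case $E=K$. In particular, the conclusion is immediate for $n\ges 3$, with no use of the characteristic hypothesis.

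It remains to treat $n=2$ under the assumption $\mr{ch}(K)\neq 2$. Here $\vartheta$ is the unique nontrivial $K$-automorphism of $F$ and $F^0 = \{\alpha\in F : \vartheta(\alpha)=-\alpha\}$. Any $\alpha\in K\cap F^0$ satisfies $\alpha=-\alpha$ and hence $\alpha=0$ since $\mr{ch}(K)\neq 2$; but $F^0$ has $K$-dimension $1$, so any nonzero element of $F^0$ lies outside $K=E$, providing the desired $\alpha$. The excluded case $\mr{ch}(K)=n=2$ is precisely the one where $-\alpha=\alpha$ collapses $F^0$ onto $K$, so no such $\alpha$ can exist there; this confirms that the hypothesis in the statement is sharp. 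The only mild subtlety is isolating this corner case, which is not an obstacle so much as a reminder to separate it from the dimension-count argument.
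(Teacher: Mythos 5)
Your proof is correct and essentially follows the paper's argument: for $n\ges 3$ you use the same fixed-field dimension count (that $F^0\subseteq F^{\gen{\vartheta^j}}$ would force $n-1$ to divide $n$), and your treatment of the remaining case via $K\cap F^0=0$ when $\mr{ch}(K)\neq 2$ is the same idea as the paper's use of $F=K\oplus F^0$ when $\mr{ch}(K)\nmid n$, just organized by a slightly different case split ($n=2$ versus $\mr{ch}(K)\nmid n$).
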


\begin{proof}
Observe that $\vartheta^j\neq \mr{id}$.
For $\mr{ch}(K)\nmid n$ the lemma is clear, since $F=K\oplus F^0$
and $\vartheta^j$ fixes the elements of $K$, so that $\vartheta^j$ does not
fix all the elements of $F^0$.
Below we will prove the lemma for $n\ges 3$. Since, under the assumptions,
$\mr{ch}(K)\,|\, n$ implies $n\ges 3$, the lemma is proven in all the cases.

Assume that $n\ges 3$. Let $S=\vartheta^j-\mr{id}$, a $K$-linear map $F\rar F$.
Let $F_1=\mr{ker}(S)$, and observe that $F_1$ is the fixed field of $\vartheta^j$.
Hence, $F_1$ is an intermediate field of the extension $F/K$, and $F_1\neq F$. 
Assume by contradiction that $\vartheta^j$ fixes the elements of $F^0$, in other words,
$F^0\subseteq F_1$. Then $F^0=F_1$, which implies that $n-1$ divides $n$, a contradiction
since $n\ges 3$.
\end{proof}

\begin{lemma}
\label{lcyc0}
Let 
$\alpha\in F$ and $\alpha\neq 0$. 
Then  
$\{\alpha\vartheta(\beta)-\beta\vartheta^{-1}(\alpha):\, \beta \in F\}= F^0$.
\end{lemma}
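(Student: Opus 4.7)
The plan is to perform a change of variables that rewrites the set as the image of a familiar $K$-linear endomorphism of $F$. Put $\mu := \alpha\vartheta(\beta)$. Since $\alpha \neq 0$ and $\vartheta$ is a field automorphism, the assignment $\beta \mapsto \mu$ is a $K$-linear bijection of $F$ onto itself. Applying $\vartheta^{-1}$ gives $\vartheta^{-1}(\mu) = \vartheta^{-1}(\alpha)\beta$, hence $\beta\vartheta^{-1}(\alpha) = \vartheta^{-1}(\mu)$, and therefore
\[
\{\alpha\vartheta(\beta) - \beta\vartheta^{-1}(\alpha) : \beta \in F\} \;=\; \{\mu - \vartheta^{-1}(\mu) : \mu \in F\} \;=\; \mr{ran}(\mr{id} - \vartheta^{-1}).
\]

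It then remains to identify this image with $F^0$. The inclusion $\mr{ran}(\mr{id} - \vartheta^{-1}) \subseteq F^0$ is immediate from the $\mr{Gal}(F/K)$-invariance of the trace, since $T(\mu - \vartheta^{-1}(\mu)) = T(\mu) - T(\vartheta^{-1}(\mu)) = 0$. For the reverse inclusion I would give a short dimension count rather than invoke the additive form of Hilbert's Theorem 90 as a black box: the kernel of $\mr{id} - \vartheta^{-1}$ is the fixed field of $\vartheta^{-1}$, which equals $K$ because $\vartheta^{-1}$ also generates $\mr{Gal}(F/K)$. Thus $\dim_K \ker(\mr{id} - \vartheta^{-1}) = 1$, so by rank--nullity $\dim_K \mr{ran}(\mr{id} - \vartheta^{-1}) = n - 1 = \dim_K F^0$, and the inclusion forces equality.

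I expect no serious obstacle: the entire content is spotting the substitution $\mu = \alpha\vartheta(\beta)$, after which the assertion becomes either the classical additive Hilbert 90 or a one-line dimension argument. The only mild subtlety is verifying that $\vartheta^{-1}$ fixes exactly $K$ (so that the kernel is one-dimensional), but this is immediate from the fact that $\langle \vartheta^{-1} \rangle = \mr{Gal}(F/K)$ together with the Galois correspondence.
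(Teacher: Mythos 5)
Your proof is correct, but it takes a different route from the paper's. The paper works directly with the $K$-linear map $f(\beta)=\alpha\vartheta(\beta)-\beta\vartheta^{-1}(\alpha)$: it first notes $\mr{ran}(f)\subseteq F^0$, picks a nonzero $\beta_0\in\ker(f)$ (possible since the range has dimension less than $n$), and then shows $\beta\in\ker(f)$ if and only if $\vartheta(\beta/\beta_0)=\beta/\beta_0$, so that $\ker(f)=\beta_0K$ and the dimension count finishes the argument. Your substitution $\mu=\alpha\vartheta(\beta)$, which is legitimate since $\beta\mapsto\alpha\vartheta(\beta)$ is a $K$-linear bijection of $F$ and $\vartheta^{-1}(\mu)=\vartheta^{-1}(\alpha)\beta$, reduces the statement in one stroke to the assertion $\mr{ran}(\mr{id}-\vartheta^{-1})=F^0$, i.e.\ to the additive Hilbert 90 for the generator $\vartheta^{-1}$ (the paper's Lemma \ref{lcyc1}, which it proves separately by exactly your kernel/rank--nullity argument). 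What your route buys is economy and transparency: the lemma is exposed as a disguised form of Lemma \ref{lcyc1}, the kernel is manifestly the fixed field $K$ without having to choose an auxiliary element $\beta_0$, and no separate verification that the kernel is one-dimensional is needed beyond the Galois correspondence. The paper's route buys nothing extra here beyond being self-contained at this point in the text; both arguments are complete and elementary.
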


\begin{proof}
Let  
$V=\{\alpha\vartheta(\beta)-\beta\vartheta^{-1}(\alpha):\, \beta \in F\}$.
Since $\vartheta$ preserves the trace, one can see that $V\subseteq F^0$.
Let $h:F\rar F$ be the $K$-linear map defined by 
$h(\beta)= \alpha\vartheta(\beta)-\beta\vartheta^{-1}(\alpha)$,
and observe that $V$ is the range of $h$.
Since the range of $h$ has dimension less than $n$, 
there exists $\beta_0\in \mr{ker}(h)$ such that $\beta_0\neq 0$.
An easy computation shows that, for all $\beta\in F$,
$\beta\in\mr{ker}(h)$ if and only if $\vartheta(\beta/\beta_0)=\beta/\beta_0$.
Since the fixed field of $\vartheta$ is $K$, we have $\mr{ker}(h)=\beta_0K$.
Hence, $V$ has dimension $n-1$ over $K$, and the lemma follows.
\end{proof}

\begin{remark}
\label{rcyc0}
By applying Lemma \ref{lcyc0} with the generator $\vartheta^{-1}$ in place of
$\vartheta$, one sees that if $\beta\in F$ and $\beta\neq 0$ then 
$\{\alpha\vartheta(\beta)-\beta\vartheta^{-1}(\alpha):\, \alpha \in F\}= F^0$.
\end{remark}

\bigskip

The following is the additive form Hilbert's Theorem 90.

\begin{lemma}
\label{lcyc1}
We have  
$\{\vartheta(\alpha)-\alpha:\, \alpha \in F\}= F^0$.
\end{lemma}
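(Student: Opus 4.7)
The plan is to mimic the strategy already used for Lemma \ref{lcyc0}: establish one inclusion by direct computation, and the other by a dimension count that forces equality.

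First I would prove the inclusion $\{\vartheta(\alpha)-\alpha:\, \alpha\in F\}\subseteq F^0$. For any $\alpha\in F$, since the trace $T$ is Galois-invariant we have $T(\vartheta(\alpha))=T(\alpha)$, hence $T(\vartheta(\alpha)-\alpha)=0$, that is $\vartheta(\alpha)-\alpha\in F^0$. This gives the inclusion for free.

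For the reverse inclusion, I would consider the $K$-linear map $g:F\to F$ defined by $g(\alpha)=\vartheta(\alpha)-\alpha$, so that the left-hand side of the statement is exactly $\mr{ran}(g)$. The kernel of $g$ is the fixed field of $\vartheta$, which equals $K$ because $\vartheta$ generates $\mr{Gal}(F/K)$. Since $\dim_K K=1$, the rank-nullity theorem gives $\dim_K \mr{ran}(g)=n-1$. On the other hand, $\dim_K F^0=n-1$ as recalled in the paragraph preceding Lemma \ref{ltaneqa} (the trace is surjective because $F/K$ is separable). Combining the inclusion $\mr{ran}(g)\subseteq F^0$ with the equality of dimensions forces $\mr{ran}(g)=F^0$, which is what we wanted.

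There is really no serious obstacle here; the only point that requires any care is the identification of $\ker(g)$ with $K$, which is immediate from the fact that $\vartheta$ generates the Galois group, together with the standard fact that the fixed field of the whole Galois group is $K$. Note also that, in contrast to Lemma \ref{ltaneqa}, no characteristic hypothesis is needed, because the dimension count works verbatim in every characteristic.
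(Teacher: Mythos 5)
Your proof is correct and is essentially the same as the paper's: both compute the kernel of the $K$-linear map $\vartheta-\mr{id}$ (the fixed field $K$, of dimension $1$), note that the range lies in $F^0$, and conclude by a dimension count. Your write-up just spells out the details that the paper leaves implicit.
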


\begin{proof}
The $K$-linear map $\vartheta-\mr{id}:F\rar F$ has kernel of
dimension $1$ and range included in $F^0$. 
\end{proof}

\begin{lemma}
\label{lcyc2}
Let $k$ be an integer such that $k+1\not\equiv_n 0$.
Then 
$\mr{span}_K(\alpha\vartheta(\beta)-\beta\vartheta^k(\alpha):\, \alpha,\beta \in F)= F$.
\end{lemma}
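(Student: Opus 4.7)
The plan is to prove Lemma \ref{lcyc2} by contradiction, leveraging the nondegeneracy of the trace form $(\alpha,\beta)\mapsto T(\alpha\beta)$ on $F$ twice. Let $V=\mr{span}_K(\alpha\vartheta(\beta)-\beta\vartheta^k(\alpha):\alpha,\beta\in F)$, which is a $K$-subspace of $F$; suppose for contradiction that $V\neq F$. Then the orthogonal complement of $V$ with respect to the trace form is nontrivial, so there exists a nonzero $c\in F$ such that $T(cv)=0$ for every $v\in V$. Explicitly,
\[
T\bigl(c\alpha\vartheta(\beta)\bigr)=T\bigl(c\beta\vartheta^k(\alpha)\bigr)\qquad\text{for all }\alpha,\beta\in F.
\]

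Next I would rewrite the left-hand side using the $\mr{Gal}(F/K)$-invariance of the trace: applying $\vartheta^{-1}$ inside $T$ gives $T(c\alpha\vartheta(\beta))=T(\vartheta^{-1}(c)\vartheta^{-1}(\alpha)\beta)$. Thus, for every fixed $\alpha\in F$, the $K$-linear functional
\[
\beta\longmapsto T\bigl((\vartheta^{-1}(c)\vartheta^{-1}(\alpha)-c\vartheta^k(\alpha))\beta\bigr)
\]
is identically zero on $F$. By nondegeneracy of the trace form, this forces
\[
\vartheta^{-1}(c)\vartheta^{-1}(\alpha)=c\vartheta^k(\alpha)\qquad\text{for all }\alpha\in F.
\]

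Finally, setting $\alpha=1$ yields $\vartheta^{-1}(c)=c$, so $c$ lies in the fixed field of $\vartheta^{-1}$, which is $K$ (since $\vartheta^{-1}$ also generates $\mr{Gal}(F/K)$); in particular $c\in K^*$. The displayed identity then simplifies, after dividing by $c$, to $\vartheta^{-1}(\alpha)=\vartheta^k(\alpha)$ for all $\alpha\in F$, that is, $\vartheta^{k+1}=\mr{id}$. Since $\vartheta$ has order $n$, this means $k+1\equiv_n 0$, contradicting the hypothesis. Hence $V=F$. The argument is essentially bookkeeping once the two applications of nondegeneracy are set up; the only subtle point is the shift from $\vartheta$ to $\vartheta^{-1}$ via $T\circ\vartheta^{-1}=T$, which is needed to isolate $\beta$ and invoke nondegeneracy for the second time.
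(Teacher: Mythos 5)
Your proof is correct. It differs from the paper's in a modest but genuine way: the paper first notes (taking $\alpha=1$ and invoking Lemma \ref{lcyc1}, the additive Hilbert 90) that $F^0\subseteq V$, so that $V\neq F$ would force $V=F^0$ and the contradiction hypothesis becomes simply that every generator $\alpha\vartheta(\beta)-\beta\vartheta^k(\alpha)$ has trace zero; it then substitutes $\alpha\mapsto\vartheta(\alpha)$, uses $T\circ\vartheta=T$, and applies nondegeneracy once to get $\vartheta^{k+1}=\mr{id}$. You instead skip Lemma \ref{lcyc1} entirely and run a pure duality argument: nondegeneracy gives a nonzero $c$ in the orthogonal complement of $V$, a second application of nondegeneracy (after shifting by $\vartheta^{-1}$ inside the trace) gives the pointwise identity $\vartheta^{-1}(c)\vartheta^{-1}(\alpha)=c\vartheta^k(\alpha)$, and the extra step $\alpha=1$ shows $c\in K^*$, after which the same conclusion $\vartheta^{k+1}=\mr{id}$ contradicts $k+1\not\equiv_n 0$. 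In effect the paper's argument is your argument specialized to $c=1$, with Lemma \ref{lcyc1} doing the work of reducing to that case; your version is self-contained and slightly more general (it never needs $F^0\subseteq V$), at the cost of one more application of nondegeneracy and the step identifying $c$ as a nonzero scalar, while the paper's is shorter given that Lemma \ref{lcyc1} is already on hand.
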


\begin{proof}
Define $V=\mr{span}_K(\alpha\vartheta(\beta)-\beta\vartheta^k(\alpha):\, \alpha,\beta \in F)$,
and observe that $\{\vartheta(\beta)-\beta:\beta\in F\}= F^0$, by Lemma \ref{lcyc1}. 
By taking $\alpha=1$ one sees that $F^0\subseteq V$. 
It is enough to show that there exists $\gamma\in V$ such that $T(\gamma)\neq 0$.
Assume by contradiction that $T(\alpha\vartheta(\beta)-\beta\vartheta^k(\alpha))=0$
for all $\alpha,\beta \in F$. By substituting $\alpha$ with $\vartheta(\alpha)$, we get 
$T(\vartheta(\alpha\beta)-\beta\vartheta^{k+1}(\alpha))=0$ for all $\alpha,\beta$.
Hence, for all $\alpha,\beta$, we have 
$T(\alpha\beta)=T(\vartheta(\alpha\beta))=T(\beta\vartheta^{k+1}(\alpha))$,
so that $T([\vartheta^{k+1}(\alpha)-\alpha]\beta)=0$.
Since the trace form is nondegenerate, we get $\vartheta^{k+1}(\alpha)=\alpha$
for all $\alpha\in F$. This is a contradiction since $\vartheta^{k+1}\neq \mr{id}$.
\end{proof}

\bigskip

For the applications of Section \ref{sO_K}
we need a basis of $F$ over $K$
with the properties given in the following lemma.
The ensuing remarks will also be applied in that section.

\begin{lemma}
\label{lbascyc}
There exists a basis $(\tau_i)_{0\les i < n}$ of $F$ over $K$
such that the following holds.
\begin{enumerate}
\item 
$(\tau_i)_{1\les i < n}$ is a basis of $F^0$ over $K$.

\item 
If $\mr{ch}(K)\nmid n$ then $\tau_0=1$.

\item 
If $\mr{ch}(K) \,|\, n$ then  $\tau_1=1$.

\item 
\label{lbascyc4}
If $n\ges 2$ and $(\mr{ch}(K),n)\neq (2,2)$ then 
$\tau_{n-1}\not\in K$.
\end{enumerate}
\end{lemma}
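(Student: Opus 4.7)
The plan is to split the construction into two cases according to whether $\mr{ch}(K)$ divides $n$, and in each case exploit the fact that $T$ is surjective (so $F^0$ has codimension one in $F$) together with the explicit formula $T(\alpha)=n\alpha$ for $\alpha\in K$.

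Suppose first that $\mr{ch}(K)\nmid n$. Then $n$ is invertible in $K$, so $T(1)=n\neq 0$, giving $1\notin F^0$ and the direct sum decomposition $F=K\oplus F^0$. I would set $\tau_0=1$. Note that if $\alpha\in F^0\cap K$ then $T(\alpha)=n\alpha=0$ forces $\alpha=0$, so every nonzero element of $F^0$ lies outside $K$. Consequently any basis $\tau_1,\ldots,\tau_{n-1}$ of $F^0$ works: condition (1) holds by construction, (2) by the choice of $\tau_0$, (3) is vacuous, and (4) holds since whenever $n\ges 2$ the element $\tau_{n-1}$ is a nonzero element of $F^0$ and hence not in $K$.

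Suppose next that $\mr{ch}(K)\mid n$. Then $T(\alpha)=n\alpha=0$ for every $\alpha\in K$, so $K\subseteq F^0$; in particular $1\in F^0$, so I set $\tau_1=1$. Since $T$ is surjective, $F^0\neq F$, so I may pick any $\tau_0\in F\setminus F^0$. The assumption $(\mr{ch}(K),n)\neq(2,2)$ combined with $\mr{ch}(K)\mid n$ forces $n\ges 3$, because $n=2$ would force $\mr{ch}(K)=2$. Therefore $\dim_K F^0=n-1\ges 2>1=\dim_K K$, so $F^0\supsetneq K$, and I can choose $\tau_{n-1}\in F^0\setminus K$. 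I then extend $\{\tau_1,\tau_{n-1}\}=\{1,\tau_{n-1}\}$ to a basis $\tau_1,\tau_2,\ldots,\tau_{n-1}$ of $F^0$, which is possible since these two elements are $K$-linearly independent (as $\tau_{n-1}\notin K$). Together with $\tau_0\in F\setminus F^0$, this produces a basis of $F$ because $F^0$ has codimension one.

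There is no real obstacle; the only slightly delicate point is the verification that $n\ges 3$ in the second case, which is exactly where the hypothesis $(\mr{ch}(K),n)\neq(2,2)$ is used to guarantee the existence of $\tau_{n-1}\in F^0\setminus K$. The remaining checks are elementary consequences of the surjectivity and nondegeneracy of the trace.
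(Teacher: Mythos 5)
Your construction is correct and runs along essentially the same elementary lines as the paper's: both rest on the surjectivity of the trace and the codimension-one position of $F^0$ in $F$, together with $T(\alpha)=n\alpha$ for $\alpha\in K$. The paper does it uniformly, starting from an arbitrary basis $(\gamma_i)$ with $\gamma_0=1$, picking $\gamma_{i_0}$ of nonzero trace (with $i_0=0$ when $\mr{ch}(K)\nmid n$), and subtracting the appropriate trace multiples of $\gamma_{i_0}$ from the remaining $\gamma_i$; you instead split into the two characteristic cases and choose the distinguished elements ($1$, a nonzero-trace complement $\tau_0$, and $\tau_{n-1}\in F^0\setminus K$) by hand, which has the small advantage that property (4) is built in rather than verified after the fact. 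One point to patch: the lemma asserts the existence of a basis satisfying (1)--(3) in \emph{all} cases, but your second case is carried out only under the hypothesis $(\mr{ch}(K),n)\neq(2,2)$, which is merely the hypothesis of item (4). When $\mr{ch}(K)=2$ and $n=2$ one has $K\subseteq F^0$ and $\dim_K F^0=1$, so $F^0=K$, and it suffices to take $\tau_1=1$ and any $\tau_0\in F\setminus F^0$; with this one-line addition your argument is complete.
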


\begin{proof}
There exists a basis $(\gamma_i)_{0\les i <n}$ of $F$ over $K$ such that $\gamma_0=1$.
There exists $i_0$ such that $T(\gamma_{i_0})\neq 0$.
In case $\mr{ch}(K)\nmid n$, take $i_0=0$.
We define 
$$
\begin{array}{ll}
\tau_{0} = \gamma_{i_0}
& 
\\[3pt]
\tau_{i}  = \gamma_{i-1}- T(\gamma_{i-1})T(\gamma_{i_0})^{-1}\gamma_{i_0}
&
\mbox{if } 0< i \les i_0
\\[3pt]
\tau_{i}  = \gamma_{i}- T(\gamma_{i})T(\gamma_{i_0})^{-1}\gamma_{i_0}
&
\mbox{if } i > i_0,
\end{array}
$$
and one can see that $(\tau_i)_{0\les i <n}$ satisfies the desired properties.
\end{proof}

\begin{remark}
\label{rxisigma}
Let $(\tau_i)_{0\les i < n}$ be a basis of $F$ over $K$
with the properties stated in Lemma \ref{lbascyc}.
Define $\xi_i=\tau_i-\vartheta(\tau_i)$ for $0\les i < n$, 
and recall Lemma \ref{lcyc1}.
Define $i_0=0$ when 
$\mr{ch}(K)\nmid n$,
and  
$i_0=1$ when $\mr{ch}(K)\,|\, n$.
Observe that $\xi_{i_0}=0$.
Then the set 
$(\xi_i)_{0\les i < n,\, i\neq i_0}$
is linearly independent over $K$.
\end{remark}

\begin{remark}
\label{rcyc2}
Let $V$ be a $K$-vector subspace of $F$
of dimension $n-1$.
Observe that if $\alpha\in F$ and $\alpha V=V$ then
$\alpha\in K$. In fact, $V$ is a vector space over 
the subfield 
$K(\alpha)$ generated by $\alpha$ over $K$, and we have
$\mr{dim}_K(V)=\mr{dim}_{K(\alpha)}(V)\cdot [K(\alpha):K]$,
from which we get $[K(\alpha):K]=1$.
Now, 
let $(\xi_0, \xi_1)$ be an ordered set of elements of $F$ that is 
linearly independent over $K$. 
Since $\xi_0/\xi_1\not\in K$, by the above observation we have 
$\xi_0 V\neq \xi_1 V$. 
Hence, $\xi_0 V+\xi_1 V=F$.

\end{remark}

\subsection{Lemmas over discrete valuation rings}
\label{sdvr}

This section is divided into three parts. 
In the first, we discuss the notion of just-infinite Lie-lattice;
in the second, we introduce the convenient terminology of 
relative invariant exponents and $s$-invariants;
and in the third, we introduce a notation for the parametrization of 
maximal proper submodules of a lattice.

Let $R$ be a commutative ring. An $R$-lattice is a finitely generated free $R$-module.
When an $R$-lattice is endowed with the structure of an $R$-Lie algebra, we call it an $R$-Lie lattice.
For our purposes, even when proving or recalling general properties, 
it is natural to work over a principal ideal domain, so that a submodule of a lattice is a lattice.
So, we assume that $R$ is a principal ideal domain, and we denote by $F$ the field of fractions of $R$.
When $V$ is a finite dimensional $F$-vector space,
a full $R$-lattice in $V$ is a finitely generated $R$-submodule of $V$
whose span over $F$ is $V$.
Observe that, 
since $R$ is a principal ideal domain, a full $R$-lattice is an $R$-lattice.
Finally, if $L$ is an $R$-lattice then one can identify
$L$ with a full $R$-lattice in $L\otimes_R F$.

\bigskip

For the first part of the section, we say that an $R$-Lie lattice $L$ is just infinite if 
it is nonzero and, for all nonzero ideals $I$ of $L$,
we have $\mr{dim}_R(I)=\mr{dim}_R(L)$.
Also, $L$ is said to be hereditarily just infinite
if all subalgebras $M$ of $L$ with $\mr{dim}_R(M)=\mr{dim}_R(L)$
are just infinite.

\begin{remark}
\label{rsubz}
Let $L$ be a $ \bb{Z}_p$-lattice and $M$ be a $\bb{Z}_p$-submodule of $L$.
Then $\mr{dim}_{\bb{Z}_p}(M)=\mr{dim}_{\bb{Z}_p}(L)$ if and only if $[L:M]$ is finite,
where the index is taken with respect to the additive group structure. 
Also, $M$ is a maximal proper $\bb{Z}_p$-submodule of $L$ if and only if $[L:M]=p$.
\end{remark}

\begin{lemma}
\label{lsjipid2}
Let $L$ be an $R$-Lie lattice, and let 
$\cl{L}:=L\otimes_{R}F$ be the associated
$F$-Lie algebra. 
Then the following are equivalent.
\begin{enumerate}
\item
\label{lsjipid2.1}
$L$ is just infinite.

\item
\label{lsjipid2.2}
$\cl{L}$ is nonzero and it has no nonzero proper ideals.

\item
\label{lsjipid2.3}
$L$ is hereditarily just infinite.
\end{enumerate}
\end{lemma}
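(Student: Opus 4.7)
The plan is to route everything through the $F$-Lie algebra $\cl{L}$, exploiting the standard dictionary between full $R$-sublattices of $\cl{L}$ and $F$-subspaces of $\cl{L}$. The key fact I will use repeatedly is that for an $R$-submodule $N$ of a lattice $L$ over the principal ideal domain $R$, one has $\dim_R(N)=\dim_R(L)$ if and only if $N\otimes_R F = L\otimes_R F = \cl{L}$, i.e.\ $N$ has full rank precisely when its $F$-span is everything. Dually, for any $F$-subspace $J$ of $\cl{L}$, the intersection $J\cap L$ is an $R$-submodule with $(J\cap L)\otimes_R F = J$, and if $J\neq 0$ then $J\cap L\neq 0$ (clear denominators). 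Moreover, taking $\otimes_R F$ sends Lie ideals to Lie ideals and Lie subalgebras to Lie subalgebras, and conversely intersecting with $L$ preserves both structures.

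First I would prove (\ref{lsjipid2.1}) $\Leftrightarrow$ (\ref{lsjipid2.2}). For (\ref{lsjipid2.1}) $\Rightarrow$ (\ref{lsjipid2.2}), note that $L\neq 0$ forces $\cl{L}\neq 0$; then given any nonzero ideal $J$ of $\cl{L}$, the ideal $J\cap L$ of $L$ is nonzero, hence has full $R$-rank by just-infiniteness, so $J\supseteq (J\cap L)\otimes_R F = \cl{L}$ and $J=\cl{L}$. For (\ref{lsjipid2.2}) $\Rightarrow$ (\ref{lsjipid2.1}), given a nonzero ideal $I$ of $L$, the ideal $I\otimes_R F$ of $\cl{L}$ is nonzero, hence equal to $\cl{L}$ by hypothesis, so $\dim_R(I)=\dim_F(\cl{L})=\dim_R(L)$.

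Next, (\ref{lsjipid2.3}) $\Rightarrow$ (\ref{lsjipid2.1}) is immediate by choosing $M=L$ in the definition of hereditarily just infinite. It remains to prove (\ref{lsjipid2.2}) $\Rightarrow$ (\ref{lsjipid2.3}). Let $M$ be a subalgebra of $L$ with $\dim_R(M)=\dim_R(L)$. Then $M\otimes_R F$ is an $F$-Lie subalgebra of $\cl{L}$ of full $F$-dimension, hence $M\otimes_R F=\cl{L}$ as Lie algebras. Applying the already-established equivalence (\ref{lsjipid2.1}) $\Leftrightarrow$ (\ref{lsjipid2.2}) to $M$ in place of $L$, and using that $M\otimes_R F=\cl{L}$ still has no nonzero proper ideals by (\ref{lsjipid2.2}), we conclude that $M$ is just infinite, which is what was needed.

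No step looks genuinely hard; the only mild subtlety is making sure that the natural operations (intersecting with $L$, tensoring with $F$) commute with the formation of Lie ideals and subalgebras and preserve the rank/dimension bookkeeping correctly, in particular that a nonzero ideal of $\cl{L}$ always meets $L$ nontrivially. Once that dictionary is laid down cleanly at the start of the proof, the three implications fall out in a few lines each.
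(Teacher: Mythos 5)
Your proof is correct and follows essentially the same route as the paper's: identify $L$ inside $\cl{L}$, use the dictionary between full-rank $R$-submodules and their $F$-spans to get (\ref{lsjipid2.1})$\Leftrightarrow$(\ref{lsjipid2.2}), and then deduce (\ref{lsjipid2.3}) by observing that a full-rank subalgebra $M$ satisfies $M\otimes_R F=\cl{L}$ and applying the equivalence to $M$. You merely spell out the details (nonzero ideals of $\cl{L}$ meet $L$, tensoring preserves ideals) that the paper leaves to the reader.
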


\begin{proof}
By identifying $L\subseteq \cl{L}$,
one can prove that $\cl{I}=\cl{L}$ for all nonzero ideals $\cl{I}$ of $\cl{L}$
if and only if
$\mr{dim}_R(I)=\mr{dim}_R(L)$ for all nonzero ideals $I$ of $L$.
This proves the equivalence between (\ref{lsjipid2.1}) and (\ref{lsjipid2.2}).
The proof may be completed after observing that
if $M$ is an $R$-submodule of $L$ such that $\mr{dim}_R(M)=\mr{dim}_R(L)$
then the $F$-Lie algebras $M\otimes_R F$ and $\cl{L}$ may
be identified.
\end{proof}

\bigskip

The following lemma is a version of \cite[Lemma 10]{NS2022JGT}.

\begin{lemma}
\label{lsiminj2}
Let $L$ be a just-infinite $\bb{Z}_p$-Lie lattice, 
and let $\varphi$ be a virtual endomorphism of $L$
that is not injective.
Then $\varphi$ is not simple. 
\end{lemma}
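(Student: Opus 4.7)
The plan is to exhibit a nonzero $\varphi$-invariant ideal of $L$ that is contained in the domain of $\varphi$. The natural candidate is a sufficiently divisible power $p^k L$, which is an ideal of $L$ sitting inside $\ker(\varphi)$; the key observation is that any such ideal is automatically $\varphi$-invariant because $\varphi$ sends it to zero.

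Concretely, let $N = \ker(\varphi)$, which is nonzero by hypothesis, and note that $N$ is an ideal of the subalgebra $M$. The first step is to show that $N$ has full rank in $L$. Since $M$ has finite index in $L$, we have $\mr{dim}_{\bb{Z}_p}(M) = \mr{dim}_{\bb{Z}_p}(L)$, so by the equivalence of (\ref{lsjipid2.1}) and (\ref{lsjipid2.3}) in Lemma \ref{lsjipid2}, $M$ is itself just infinite. Applying just-infiniteness of $M$ to the nonzero ideal $N \nors M$, we get $\mr{dim}_{\bb{Z}_p}(N) = \mr{dim}_{\bb{Z}_p}(M) = \mr{dim}_{\bb{Z}_p}(L)$, so $[L:N]$ is finite by Remark \ref{rsubz}.

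The second step extracts an ideal of $L$ inside $N$. Since $L/N$ is a finite abelian $p$-group, there exists $k \in \bb{N}$ with $p^k L \subseteq N$. Set $I = p^k L$. Then $I$ is an ideal of $L$ (being a scalar multiple of $L$), and it is nonzero because $L$ is a nonzero free $\bb{Z}_p$-module. We have $I \subseteq N \subseteq M$, and $\varphi(I) \subseteq \varphi(N) = \{0\} \subseteq I$. Thus $I$ is a nonzero $\varphi$-invariant ideal of $L$, which shows that $\varphi$ is not simple.

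There is essentially no obstacle: the whole argument rests on the observation that torsion in $L/N$ is killed by a power of $p$, which produces a genuine ideal of $L$ inside $\ker(\varphi)$ for free. The only subtle point to state carefully is the use of Lemma \ref{lsjipid2} to pass just-infiniteness from $L$ to the finite-index subalgebra $M$, so that the nonzero ideal $N$ of $M$ is forced to have full rank rather than merely to be nonzero.
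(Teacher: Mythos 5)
Your proof is correct. The paper itself provides no argument for this lemma, instead citing it as a version of Lemma 10 of [Nos22b] ({\it J. Group Theory} \textbf{25} (2022)), so there is no proof in this paper to compare against. Your argument is the natural one and almost certainly matches what is done in the cited reference: kill the kernel $N$ by noting it is a nonzero ideal of the full-rank (hence, by Lemma~\ref{lsjipid2}, just-infinite) subalgebra $M$, so $[L:N]$ is finite; then $p^kL\subseteq N$ for some $k$ gives a nonzero ideal of $L$ that is $\varphi$-invariant since it maps to $0$. Every step is justified by exactly the tools available in the paper (Lemma~\ref{lsjipid2} and Remark~\ref{rsubz}), and I see no gaps.
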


\begin{lemma}
\label{lMMLLnss2}
Let $L$ be a just-infinite $\bb{Z}_p$-Lie lattice with $\mr{dim}_{\bb{Z}_p}(L)\ges 2$, and let $k\in\bb{N}$.
Assume that for all $\bb{Z}_p$-submodules $M$ of $L$ of index $p^k$ we have $[M,M]=[L,L]$.
Then $p^mL$ is not self-similar of index $p^k$ for all $m\in\bb{N}$.
\end{lemma}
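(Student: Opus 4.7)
The plan is to argue by contradiction. Suppose some $p^mL$ is self-similar of index $p^k$, witnessed by a simple virtual endomorphism $\varphi:N\to p^mL$ with $[p^mL:N]=p^k$.

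First, I would reduce to the case $m=0$. Since $p^mL$ is a subalgebra of $L$ of the same $\bb{Z}_p$-dimension as $L$, Lemma \ref{lsjipid2} (hereditary just-infiniteness) shows that $p^mL$ is itself just-infinite. Moreover, every $\bb{Z}_p$-submodule of $p^mL$ of index $p^k$ has the form $p^mM$ for a unique submodule $M$ of $L$ of index $p^k$, and for such an $M$ the hypothesis gives
\[
[p^mM,p^mM]=p^{2m}[M,M]=p^{2m}[L,L]=[p^mL,p^mL].
\]
Hence the hypotheses of the lemma pass from $L$ to $p^mL$, and it is enough to rule out a simple virtual endomorphism $\varphi:M\to L$ of index $p^k$ under the given hypothesis on $L$.

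The main step is then to exhibit a nonzero $\varphi$-invariant ideal, contradicting simplicity. I would take $I:=[L,L]$. By the hypothesis on $L$ we have $[M,M]=[L,L]$, so $I=[M,M]\subseteq M$; and since $\varphi$ is a Lie homomorphism,
\[
\varphi(I)=\varphi([M,M])=[\varphi(M),\varphi(M)]\subseteq[L,L]=I,
\]
so $I$ is $\varphi$-invariant. It therefore remains to verify that $I\neq 0$, i.e.\ that $L$ is not abelian. If $L$ were abelian, every $\bb{Z}_p$-submodule of $L$ would be an ideal; combining just-infiniteness with $\mr{dim}_{\bb{Z}_p}(L)\ges 2$ (and Remark \ref{rsubz}), any rank-one summand of $L$ would yield a nonzero ideal of smaller dimension, a contradiction.

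I do not expect any real obstacle here: the hereditary passage to $p^mL$ is routine once Lemma \ref{lsjipid2} is invoked, and the contradiction reduces to the simple observation that the derived subalgebra is automatically $\varphi$-invariant whenever the hypothesis forces $[M,M]=[L,L]$. The only thing to watch is ruling out the abelian case, which is where the assumption $\mr{dim}_{\bb{Z}_p}(L)\ges 2$ is used.
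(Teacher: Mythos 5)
Your argument is correct, and at the decisive step it is genuinely leaner than the paper's. Both proofs exhibit the derived subalgebra as a nonzero $\varphi$-invariant ideal: non-abelianness comes from just-infiniteness plus $\mr{dim}\ges 2$ (via Lemma \ref{lsjipid2}), and the hypothesis $[M,M]=[L,L]$ gives $I:=[L,L]\subseteq M$. The difference is in verifying $\varphi(I)\subseteq I$. The paper first invokes Lemma \ref{lsiminj2} to reduce to injective $\varphi$, then uses index stability \cite[Corollary 8]{NSindexstable2021} to see that $M'=\varphi(M)$ again has index $p^k$, and applies the hypothesis a second time to get $[M',M']=[p^mL,p^mL]$, i.e.\ $\varphi(I)=I$. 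You observe instead that $\varphi(I)=\varphi([M,M])=[\varphi(M),\varphi(M)]\subseteq[L,L]=I$ holds for any virtual endomorphism simply because $\varphi(M)\subseteq L$; this dispenses with both the injectivity reduction and the index-stability input, at the harmless cost of proving only the containment rather than equality, which is all the definition of $\varphi$-invariance requires. Your preliminary rescaling to $m=0$ (using that submodules of index $p^k$ in $p^mL$ are exactly $p^mM$ with $[L:M]=p^k$, and $[p^mM,p^mM]=p^{2m}[M,M]$) is equivalent to the paper's device of passing to $p^{-m}M$, and is carried out correctly.
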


\begin{proof}
Let $m\in\bb{N}$ and $\varphi:M\rar p^m L$ be a virtual endomorphism of $p^mL$ of index $p^k$. 
We have to show that $\varphi$ is not simple.
Observe that $p^mL$ is just infinite (Lemma \ref{lsjipid2})
and $\mr{dim}_{\bb{Z}_p}(p^mL)\ges 2$;
as a consequence, $p^mL$ is not abelian.
By Lemma \ref{lsiminj2},
we can assume that $\varphi$ is injective.
Let $M'=\varphi(M)$, so that $\varphi$ induces an isomorphism
of $\bb{Z}_p$-Lie algebras $M\simeq M'$.
By \cite[Corollary 8]{NSindexstable2021},
$M'$ has index $p^k$ in $p^mL$.
Hence, $p^{-m}M$ and $p^{-m}M'$ are submodules of $L$ of index $p^k$.
By assumption, 
$[p^{-m}M, p^{-m}M]=[L,L]=[p^{-m}M',p^{-m}M']$.
Hence, 
$[M, M]=[p^mL,p^mL]=[M',M']$.
It follows that $[p^mL,p^mL]$ is a nonzero ideal of $p^mL$ that
is $\varphi$-invariant, and we conclude that $\varphi$ is not simple.
\end{proof}

\bigskip

For the second part of the section,
we assume that $R$ is a discrete valuation ring, and we let $\varrho\in R$ be a uniformizing parameter. 
For the basic properties of the Hermite and Smith canonical forms,
the reader may consult, for instance, \cite[Section 5.2]{AWalgmod}.
Recall that $F$ denotes the field of fractions of $R$.

Let $V$ be a finite dimensional $F$-vector space,
and let $n=\mr{dim}_F(V)$.
Also, let $L$ and $M$ be two full $R$-lattices in $V$.
We will define the invariant exponents $s_R(L,M)$
of $M$ relative to $L$ over $R$ as a multiset of integers.
We abuse the notation by denoting multisets with the same notation
as for ordered sets, for instance $(s_0,...,s_{n-1})$, possibly adding exponents
to indicate multiplicities, for instance $(s_0^{f_0},..., s_{n-1}^{f_{n-1}})$.
The definition of $s_R(L,M)$ goes as follows. 
There exist a basis $(e_0,...,e_{n-1})$ of $L$ over $R$ and
$s_0,...,s_{n-1}\in\bb{Z}$ such that $(\varrho^{s_0}e_0, ..., \varrho^{s_{n-1}}e_{n-1})$
is a basis of $M$ over $R$.
We define $s_R(L,M)=(s_0,....,s_{n-1})$, where one can prove that the multiset
$(s_0,....,s_{n-1})$ only depends on the pair $(L,M)$ and not on the chosen basis. 

Let $V$ be endowed with the structure of $F$-Lie algebra,
and assume that $[V,V]=V$.
Let $L$ be a full $R$-lattice in $V$.
Then $[L,L]$ is a full $R$-lattice in $V$.
We define the $s$-invariants $s_R(L)$ of $L$ over $R$ to be the invariant
exponents $s_R(L, [L,L])$ of $[L,L]$ relative to $L$ over $R$.

\bigskip

For the third part of the section, we keep the assumptions of the second part,
and we introduce some notation for the classification of maximal proper submodules.
Let $L$ be an $R$-lattice, and let $(x_\lambda)_{0\les \lambda < n}$ be a basis
of $L$ over $R$, where $n=\mr{dim}_R(L)$.
We consider pairs $(\mu, b)$ 
where $\mu$ is an integer with $0\les \mu<n$
and $b=(b_\lambda)_{\lambda\neq\mu}$ is a sequence of elements of $R$.
With any such pair we associate a maximal proper $R$-submodule $N_{\mu,b}$ of $L$ defined
to be the span over $R$ of $(y_\lambda)$, where
\begin{eqnarray*}
y_\mu 
&=& 
\varrho x_\mu 
\\
y_\lambda 
&=& 
x_\lambda + b_\lambda x_\mu\quad\mbox{if }\lambda\neq \mu . 
\end{eqnarray*}
Any maximal proper $R$-submodule of $L$ is equal to $N_{\mu,b}$ for some $(\mu,b)$.
The proof of the following lemma is left to the reader.

\begin{lemma}
\label{ldvr1}
Consider pairs $(\mu, b)$ and $(\nu, c)$.
The following holds.
\begin{enumerate}
\item 
Assume that $\mu =\nu$.
Then $N_{\mu,b}=N_{\mu, c}$ if and only if $b_\lambda\equiv_\varrho c_\lambda$ 
for all $\lambda\neq \mu$. 

\item 
Assume that $\mu\neq \nu$. 
Then $N_{\mu,b}=N_{\nu, c}$ if and only if
$b_\nu\not\equiv_\varrho 0$, $c_\mu \equiv_\varrho b_\nu^{-1}$,
and $c_\lambda \equiv_\varrho -b_\nu^{-1}b_\lambda$ for all $\lambda\neq \mu,\nu$.
\end{enumerate}
\end{lemma}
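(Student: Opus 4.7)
The plan is to reduce the problem to linear algebra over the residue field $k:=R/\varrho R$ by passing to the quotient $L/\varrho L$. The key initial observation is that every maximal proper $R$-submodule $N$ of $L$ contains $\varrho L$, since $L/N$ is a simple $R$-module, hence isomorphic to $k$ and annihilated by $\varrho$. Consequently, the assignment $N\mapsto N/\varrho L$ is a bijection between the maximal proper $R$-submodules of $L$ and the hyperplanes of the $k$-vector space $L/\varrho L$, so that the equality $N_{\mu,b}=N_{\nu,c}$ becomes equivalent to proportionality of the two linear functionals cutting out the corresponding hyperplanes.

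The next step is to write these functionals down explicitly. Denote by $(\bar x_\lambda)$ the induced basis of $L/\varrho L$, and by $\bar r\in k$ the image of $r\in R$. Reducing the defining generators of $N_{\mu,b}$ modulo $\varrho L$, the subspace $N_{\mu,b}/\varrho L$ is spanned by $\bar x_\lambda+\bar b_\lambda\bar x_\mu$ for $\lambda\ne\mu$, and this is exactly the kernel of the $k$-linear functional $f_{\mu,b}\colon L/\varrho L\to k$ determined by $f_{\mu,b}(\bar x_\mu)=1$ and $f_{\mu,b}(\bar x_\lambda)=-\bar b_\lambda$ for $\lambda\ne\mu$; the analogous formula defines $f_{\nu,c}$.

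Part (1) is then immediate: with $\mu=\nu$, both functionals take value $1$ at $\bar x_\mu$, so proportionality forces the scalar to be $1$ and the remaining equalities $\bar b_\lambda=\bar c_\lambda$ for $\lambda\ne\mu$ are precisely the congruences $b_\lambda\equiv_\varrho c_\lambda$. For part (2), with $\mu\ne\nu$, I would look for $\alpha\in k^{\ast}$ satisfying $f_{\nu,c}=\alpha f_{\mu,b}$: evaluation at $\bar x_\nu$ yields $1=-\alpha\bar b_\nu$, forcing both $\bar b_\nu\ne 0$ and $\alpha=-\bar b_\nu^{-1}$; evaluation at $\bar x_\mu$ then gives $\bar c_\mu=\bar b_\nu^{-1}$; and evaluation at the remaining $\bar x_\lambda$ gives $\bar c_\lambda=-\bar b_\nu^{-1}\bar b_\lambda$. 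Since each of these steps is reversible, the three listed congruences are equivalent to $N_{\mu,b}=N_{\nu,c}$. I do not expect any genuine obstacle: once the reduction to hyperplanes in $L/\varrho L$ is in place, the rest is bookkeeping with linear functionals, and only the necessary condition $b_\nu\not\equiv_\varrho 0$ deserves a brief comment, as it merely reflects the fact that the hyperplane $N_{\nu,c}/\varrho L$ does not contain $\bar x_\nu$.
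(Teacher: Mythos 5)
Your proposal is correct. Note that the paper itself gives no argument here (the proof is explicitly ``left to the reader''), so there is nothing to match against; your route is a clean way to fill the gap. The two facts you rely on are both sound in this setting: over a discrete valuation ring every simple module is isomorphic to $R/\varrho R$, so every maximal proper submodule contains $\varrho L$ and the problem descends to hyperplanes in $L/\varrho L$; and the explicit functionals $f_{\mu,b}$, $f_{\nu,c}$ cut out exactly the reduced submodules (the $n-1$ reduced generators are visibly linearly independent and lie in the kernel, so dimension count gives equality). The evaluations at $\bar x_\nu$, $\bar x_\mu$, and the remaining $\bar x_\lambda$ then reproduce precisely the congruences in the statement, including the unit condition $b_\nu\not\equiv_\varrho 0$, and every step reverses. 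The only alternative worth mentioning is the purely hands-on one---checking directly which generators of $N_{\nu,c}$ lie in $N_{\mu,b}$ and conversely---which yields the same congruences with a bit more bookkeeping; your formulation via proportional functionals is tidier and makes the asymmetric-looking conditions in part (2) transparent.
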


\subsection{Lemmas over the ring of integers of a $p$-adic field}
\label{sedvr}

\noindent
Let $K$ be a finite extension of $\bb{Q}_p$, and let $\cl{O}_K$ be the integral closure
of $\bb{Z}_p$ in $K$. Let $d$, $e$, and $f$ be the degree, ramification index, and inertia degree
of $K/\bb{Q}_p$, respectively. Let $\pi\in\cl{O}_K$ be a uniformizing parameter.

\begin{remark}
\label{redvr1}
Let $B$ be maximal proper $\bb{Z}_p$-submodule of $\cl{O}_K$. 
Since $[\cl{O}_K:B]=p$ and $[\cl{O}_K: \pi\cl{O}_K]=p^f$,
we see that the condition $B\subseteq \pi\cl{O}_K$ 
implies $f=1$.
Hence,
the conditions
$B\subseteq \pi\cl{O}_K$
and 
$B=\pi\cl{O}_K$ are equivalent.
\end{remark}

\medskip 

We recall the following known property; see, for instance, \cite[Theorem 5.6, page 71]{ReiMaxOrd}.

\begin{lemma}
\label{ledvr1}
Let $(\alpha_i)_{0\les i <f}$ be a sequence of elements of $\cl{O}_K$ whose reduction modulo 
$\pi$ is a basis of $\cl{O}_K/\pi\cl{O}_K$ over $\bb{F}_p$. 
Let $(\beta_j)_{0\les j < e}$ be a sequence of elements of $\cl{O}_K$ such that
$v_K(\beta_j)= j$ for all $j$,
where $v_K:K\rar \bb{Z}\cup\{\infty\}$ is the valuation of $K$.
Then $(\alpha_i\beta_j)_{0\les i < f,\,0\les j < e}$
is a basis of $\cl{O}_K$ over $\bb{Z}_p$.
\end{lemma}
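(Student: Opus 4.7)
The plan is to invoke Nakayama's lemma over the local ring $\bb{Z}_p$. Since $\cl{O}_K$ is a free $\bb{Z}_p$-module of rank $d=ef$, and the proposed family $(\alpha_i\beta_j)_{0\les i<f,\,0\les j<e}$ contains exactly $ef$ elements, it suffices to prove that any $\bb{Z}_p$-linear combination $\sum_{i,j} c_{ij}\alpha_i\beta_j$ which lies in $p\cl{O}_K=\pi^e\cl{O}_K$ must have every coefficient in $p\bb{Z}_p$; indeed, this is linear independence modulo $p$, which by a count of dimensions upgrades to $\bb{Z}_p$-linear independence, and spanning then follows from Nakayama.

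To this end I would group the combination by the index $j$, writing it as $\sum_j S_j\beta_j$ with $S_j:=\sum_i c_{ij}\alpha_i$, and analyse the valuation $v_K$. For each $j$ with $S_j\neq 0$, set $m_j:=\min_i v_p(c_{ij})$ and factor out $p^{m_j}$; since the reductions of the $\alpha_i$ form an $\bb{F}_p$-basis of $\cl{O}_K/\pi\cl{O}_K$, the cofactor $\sum_i p^{-m_j}c_{ij}\alpha_i$ is a unit modulo $\pi$, so $v_K(S_j)=e m_j$ (using $v_K(p)=e$) and hence $v_K(S_j\beta_j)=e m_j+j$.

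The decisive observation is that the integers $e m_j+j$, as $j$ ranges over $\{0,\ldots,e-1\}$, have pairwise distinct residues modulo $e$; by the strict ultrametric inequality,
\[
v_K\!\Bigl(\sum_{j}S_j\beta_j\Bigr)\;=\;\min_{j}(e m_j+j).
\]
Requiring this minimum to be at least $e$ forces $e m_j+j\ges e$ for every $j$, which, since $m_j\in\bb{N}$ and $0\les j<e$, forces $m_j\ges 1$, so every $c_{ij}$ is divisible by $p$, as required. I do not foresee any real difficulty: the only steps requiring care are the identity $v_K(S_j)=em_j$, which needs the $\alpha_i$-hypothesis combined with $v_K(p)=e$, and the strict form of the ultrametric bound, which needs the valuations $em_j+j$ to be pairwise distinct — precisely guaranteed by the fact that the $v_K(\beta_j)$ realise a full system of residues modulo $e$.
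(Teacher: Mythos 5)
Your proof is correct. Note that the paper does not supply its own proof of this lemma; it records it as a known fact with the citation \cite[Theorem 5.6, page 71]{ReiMaxOrd}, so there is no in-paper argument to compare against. Your argument --- reduce to injectivity modulo $p$ by a dimension count plus Nakayama, write the combination as $\sum_j S_j\beta_j$ with $S_j=\sum_i c_{ij}\alpha_i$, observe that $v_K(S_j)=e\,m_j$ because the $\overline\alpha_i$ form an $\bb F_p$-basis of $\kappa_K$ and $v_K(p)=e$, and then use that the valuations $e\,m_j+j$ are pairwise distinct modulo $e$ to turn the ultrametric inequality into an equality --- is a clean, self-contained proof of the statement.

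One small point to tidy: you define $m_j$ and conclude $m_j\ges 1$ only for those $j$ with $S_j\neq 0$, since the minimum in the ultrametric step ranges over those $j$. For the remaining indices you should add the (routine) remark that $S_j=0$ already forces $c_{ij}=0$ for all $i$, because the $\alpha_i$ are $\bb Z_p$-linearly independent --- they lift an $\bb F_p$-basis of $\kappa_K$, so a nontrivial $\bb Z_p$-relation, after dividing by the maximal power of $p$, would give a nontrivial $\bb F_p$-relation modulo $\pi$. With that sentence added, the argument is complete.
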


\medskip 

We fix a sequence $(\alpha_i)_{0\les i <f}$ of elements of $\cl{O}_K$ whose reduction modulo 
$\pi$ is a basis of $\cl{O}_K/\pi\cl{O}_K$ over $\bb{F}_p$. 
Moreover, we assume that $\alpha_0=1$.
We define $\varepsilon_{ij} = \alpha_i\pi^j$ for $0\les i < f$ and $0\les j <e$,
so that $(\varepsilon_{ij})$ is a basis of $\cl{O}_K$ over $\bb{Z}_p$. 
Observe that $\varepsilon_{00}=1$.
The reader should recall the definition of $s$-invariants
from Section \ref{sdvr}.

\begin{lemma}
\label{ldvr3}
Let $s\in\bb{Z}$, and let $s= qe+r$, where $q,r\in\bb{Z}$ and $0\les r <e$.
Then $s_{\bb{Z}_p}(\cl{O}_K, \pi^s\cl{O}_K) = (q^{d-rf},(q+1)^{rf})$.
\end{lemma}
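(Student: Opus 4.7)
The plan is to write down an explicit basis of $\cl{O}_K$ over $\bb{Z}_p$, scale it by appropriate powers of $p$, and verify that (up to units in $\cl{O}_K$) we obtain a basis of $\pi^s\cl{O}_K$. Since rescaling basis vectors by units of $\cl{O}_K$ does not affect the resulting $s$-invariants, this will compute $s_{\bb{Z}_p}(\cl{O}_K,\pi^s\cl{O}_K)$ directly. I will use the basis $(\varepsilon_{ij})_{0\les i<f,\,0\les j<e}$ of $\cl{O}_K$ from Lemma \ref{ledvr1}, together with the identity $\pi^e = pu$ for some unit $u\in\cl{O}_K^*$, which follows from the fact that $e$ is the ramification index of $K/\bb{Q}_p$.

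The main step is to expand $\pi^s\varepsilon_{ij} = \alpha_i\pi^{s+j}$. Writing $s+j = qe+r+j$ and splitting into the two cases $r+j<e$ and $r+j\ges e$, we get
\[
\pi^s\varepsilon_{ij} \;=\;
\begin{cases}
p^q\, u^q\, \varepsilon_{i,\,r+j}, & \text{if } 0\les j <e-r,\\[2pt]
p^{q+1}\, u^{q+1}\, \varepsilon_{i,\,r+j-e}, & \text{if } e-r\les j<e.
\end{cases}
\]
Hence $(\pi^s\varepsilon_{ij})$, which is a $\bb{Z}_p$-basis of $\pi^s\cl{O}_K$, is of the form $(p^{s_{ij}} u_{ij} \varepsilon_{\sigma(i,j)})$ for units $u_{ij}\in\cl{O}_K^*$, some permutation $\sigma$ of the index set, and exponents $s_{ij}\in\{q,q+1\}$. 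Replacing each $\varepsilon_{\sigma(i,j)}$ by $u_{ij}\varepsilon_{\sigma(i,j)}$ yields another $\bb{Z}_p$-basis $(e_{ij})$ of $\cl{O}_K$ such that $(p^{s_{ij}}e_{ij})$ is a $\bb{Z}_p$-basis of $\pi^s\cl{O}_K$.

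It then only remains to count multiplicities. The exponent $q$ occurs for those pairs $(i,j)$ with $0\les j <e-r$ and $0\les i<f$, giving $f(e-r)=d-rf$ occurrences; the exponent $q+1$ occurs for $e-r\les j<e$ and $0\les i<f$, giving $fr=rf$ occurrences. By the well-definedness of the invariant exponents, this proves $s_{\bb{Z}_p}(\cl{O}_K,\pi^s\cl{O}_K)=(q^{d-rf},(q+1)^{rf})$. No real obstacle is expected: the only mild subtlety is the harmless presence of the unit factors $u^q$ and $u^{q+1}$, which are absorbed into a change of $\bb{Z}_p$-basis of $\cl{O}_K$ and hence do not alter the multiset of exponents.
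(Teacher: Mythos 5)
Your expansion of $\pi^s\varepsilon_{ij}$ and the final multiplicity count are correct, but the step that carries all the content of the lemma is asserted rather than proved. You claim that the family $e_{ij}=u_{ij}\varepsilon_{\sigma(i,j)}$, obtained from the $\bb{Z}_p$-basis $(\varepsilon_{\lambda})$ of $\cl{O}_K$ by multiplying its members by the units $u^{q}$, $u^{q+1}$ of $\cl{O}_K$, is again a $\bb{Z}_p$-basis of $\cl{O}_K$, on the grounds that ``rescaling basis vectors by units of $\cl{O}_K$ does not affect the resulting $s$-invariants'' and that the unit factors ``are absorbed into a change of $\bb{Z}_p$-basis''. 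That general principle is false: the invariant exponents here are taken over $R=\bb{Z}_p$, and a unit of $\cl{O}_K$ is not a unit of $\bb{Z}_p$, so multiplying individual basis vectors by units of $\cl{O}_K$ need not preserve the $\bb{Z}_p$-span. For instance, if $f\ges 2$, replacing the basis vector $\varepsilon_{00}=1$ by the unit $\alpha_1$ yields a family in which $\alpha_1=\varepsilon_{10}$ occurs twice, so it is not even $\bb{Z}_p$-linearly independent. Thus, as written, your argument has a hole exactly at the point where one must check that the rescaled family still spans $\cl{O}_K$; what you have honestly established is only that $(\pi^s\varepsilon_{ij})$ is a basis of $\pi^s\cl{O}_K$ adapted, up to $\cl{O}_K$-units, to the powers $p^{q}$, $p^{q+1}$.

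The gap is easy to close, and closing it brings you back to the paper's own argument. Your unit multipliers depend only on the $\pi$-exponent: after re-indexing, the family $(e_{ij})$ is $u^{q}\alpha_i\pi^{k}$ for $r\les k<e$ together with $u^{q+1}\alpha_i\pi^{k}$ for $0\les k<r$. Setting $\beta_k=u^{q+1}\pi^{k}$ for $0\les k<r$ and $\beta_k=u^{q}\pi^{k}$ for $r\les k<e$, one has $v_K(\beta_k)=k$, so Lemma \ref{ledvr1} shows that $(\alpha_i\beta_k)$ is a $\bb{Z}_p$-basis of $\cl{O}_K$, which is exactly the missing statement. (Alternatively, for $s\ges 0$ one can compare indices: the span $L'$ of the $e_{ij}$ satisfies $\pi^s\cl{O}_K\subseteq L'\subseteq\cl{O}_K$ and $[L':\pi^s\cl{O}_K]=p^{q(d-rf)+(q+1)rf}=p^{fs}=[\cl{O}_K:\pi^s\cl{O}_K]$, so $L'=\cl{O}_K$.) Note that the paper runs the same computation in the opposite order: it first uses Lemma \ref{ledvr1} to certify that $(\alpha_i p^{q}\pi^{j-s},\ \alpha_i p^{q+1}\pi^{j-s})$ is a basis of $\cl{O}_K$ and then multiplies by $\pi^s$, landing exactly on $p$-power multiples of the fixed basis $(\varepsilon_{ij})$ and thereby avoiding the unit bookkeeping you tried to wave away.
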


\begin{proof}
Since $v_{K}(p)=e$, by Lemma \ref{ledvr1} we have that 
$(\alpha_i p^q\pi^{j-s}:\,0\les i< f,\,r\les j<e; \,\alpha_i p^{q+1}\pi^{j-s}:\,0\les i< f, \,0\les j<r)$
is a basis of $\cl{O}_K$ over $\bb{Z}_p$.
By multiplying these basis elements by $\pi^s$, we get that 
$(p^q\alpha_i \pi^{j}:\,0\les i< f,\,r\les j<e; \, p^{q+1}\alpha_i \pi^{j}:\,0\les i< f, \,0\les j<r)$
is a basis of $\pi^s\cl{O}_K$ over $\bb{Z}_p$.
The result follows.
\end{proof}

\begin{remark}
\label{rdvr3}
For $0\les i< f$ and $0\les j<e$,
define 
$$
\gamma_{ij} =
\left\{
\begin{array}{ll}
p\vep_{ij}
& 
\mbox{if}\quad j=0
\\
\vep_{ij}
& 
\mbox{if}\quad  j\neq 0. 
\end{array}
\right.
$$
By applying the argument in the proof of Lemma \ref{ldvr3} with $s=1$, it follows that $(\gamma_{ij})$
is a basis of $\pi\cl{O}_K$ over $\bb{Z}_p$.
\end{remark}

\begin{corollary}
\label{cdvr3}
Let $V$ be a finite dimensional $K$-vector space, and let $n=\mr{dim}_K(V)$.
Let $L$ and $M$ be full $\cl{O}_K$-lattices in $V$, and let 
$s_{\cl{O}_K}(L,M)=(s_0,...,s_{n-1})$.
For $0\les l < n$, let $s_l = q_le+r_l$, where $q_l,r_l\in\bb{Z}$ and $0\les r_l <e$.
Then $s_{\bb{Z}_p}(L,M) = (q_0^{d-r_0f},(q_0+1)^{r_0f}, ..., q_{n-1}^{d-r_{n-1}f},(q_{n-1}+1)^{r_{n-1}f})$.
\end{corollary}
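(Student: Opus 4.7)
The plan is to reduce the statement to the one-dimensional case already handled in Lemma \ref{ldvr3} by exploiting a direct-sum decomposition adapted to the pair $(L,M)$. First I would invoke the definition of the invariant exponents $s_{\cl{O}_K}(L,M)=(s_0,\ldots,s_{n-1})$ to fix a basis $(e_0,\ldots,e_{n-1})$ of $L$ over $\cl{O}_K$ such that $(\pi^{s_0}e_0,\ldots,\pi^{s_{n-1}}e_{n-1})$ is a basis of $M$ over $\cl{O}_K$. This yields compatible $\cl{O}_K$-module decompositions $L=\bigoplus_{l} \cl{O}_K e_l$ and $M=\bigoplus_l \pi^{s_l}\cl{O}_K e_l$, which localizes the problem to each rank-one summand.

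Next, for each index $l$, I would apply the construction used in the proof of Lemma \ref{ldvr3} to the rank-one summand $\cl{O}_K e_l$. Writing $s_l=q_l e + r_l$ with $0\les r_l<e$, this produces a $\bb{Z}_p$-basis of $\cl{O}_K e_l$ of size $d$ which, upon termwise multiplication by $p^{q_l}$ on $d-r_lf$ of the vectors and by $p^{q_l+1}$ on the remaining $r_lf$ vectors, becomes a $\bb{Z}_p$-basis of $\pi^{s_l}\cl{O}_K e_l$. Concretely, these are obtained from the basis $(\alpha_i \pi^j)$ of $\cl{O}_K$ given by Lemma \ref{ledvr1}, multiplied through by $e_l$ and rescaled by the appropriate power of $p$ in each coordinate.

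Finally, I would concatenate these rank-$d$ bases over $l=0,\ldots,n-1$ to obtain a $\bb{Z}_p$-basis of $L$ of size $nd$ together with a compatible $\bb{Z}_p$-basis of $M$ exhibited as termwise scalings by powers of $p$. Reading off the exponents yields the multiset claimed. There is no substantial obstacle here: the only input beyond Lemma \ref{ldvr3} is the well-definedness of the $\bb{Z}_p$-invariant exponents as a multiset, which allows one to declare the concatenated data to be a witness for $s_{\bb{Z}_p}(L,M)$. The argument is thus essentially a bookkeeping combination of the per-summand computations.
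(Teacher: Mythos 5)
Your proposal is correct and follows essentially the same route as the paper: choose an $\cl{O}_K$-basis $(e_l)$ of $L$ adapted to $M$ so that $(\pi^{s_l}e_l)$ is an $\cl{O}_K$-basis of $M$, and then apply Lemma \ref{ldvr3} summand by summand. The paper merely states this more tersely, leaving implicit the concatenation of the rank-$d$ bases that you spell out.
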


\begin{proof}
There exist a basis $e_0,..., e_{n-1}$ of $L$ over $\cl{O}_K$ and integers $s_0,...,s_{n-1}$ such that
$(\pi^{s_l}e_l)$ is a basis of $M$ over $\cl{O}_K$. By definition, $S_{\cl{O}_K}(L,M)=(s_0,....,s_{n-1})$.
Now the corollary follows from Lemma \ref{ldvr3}.
\end{proof}

\begin{lemma}
\label{ldvr2}
Let $B$ be a maximal proper $\bb{Z}_p$-submodule of $\cl{O}_K$.
Then the following holds. 
\begin{enumerate}
\item 
If $\pi\cl{O}_K\subseteq B$ then $s_{\bb{Z}_p}(B, \pi\cl{O}_K) = (0^{d-f+1}, 1^{f-1})$.

\item 
If $\pi\cl{O}_K\not\subseteq B$ then $s_{\bb{Z}_p}(B, \pi\cl{O}_K) = (-1, 0^{d-f-1}, 1^{f})$.
\end{enumerate}
\end{lemma}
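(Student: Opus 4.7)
The plan is to treat the two cases separately, using throughout the fact that $\cl{O}_K/B\cong\bb{F}_p$ is annihilated by $p$, so $p\cl{O}_K\subseteq B$ unconditionally.

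For part (1), the argument is essentially index counting. Since $\pi\cl{O}_K\subseteq B$ and $[\cl{O}_K:B]=p$, one has $[B:\pi\cl{O}_K]=p^{f-1}$. The quotient $B/\pi\cl{O}_K$ embeds into $\cl{O}_K/\pi\cl{O}_K\cong\bb{F}_{p^f}$, so it is annihilated by $p$ and hence isomorphic as an abelian group to $(\bb{Z}/p\bb{Z})^{f-1}$. Since the $s$-invariants $s_{\bb{Z}_p}(B,\pi\cl{O}_K)$ are precisely the exponents in the elementary divisor decomposition of $B/\pi\cl{O}_K$, exactly $f-1$ of them equal $1$ and the remaining $d-f+1$ equal $0$, giving the stated formula.

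For part (2), the strategy is to construct a basis $(u_1,\ldots,u_d)$ of $\cl{O}_K$ that simultaneously diagonalises both lattices: $(pu_1,u_2,\ldots,u_d)$ will be a basis of $B$, and $(u_1,pu_2,\ldots,pu_{f+1},u_{f+2},\ldots,u_d)$ a basis of $\pi\cl{O}_K$. Expressing the second basis in terms of the first then immediately yields the multiset $(-1,1^f,0^{d-f-1})$, as claimed. The construction proceeds in two rounds. First, I would pick $u_1\in\pi\cl{O}_K\setminus B$ that is primitive in $\cl{O}_K$, i.e., $u_1\notin p\cl{O}_K$. This is possible because Case (2) forces $e\ges 2$ (otherwise $\pi\cl{O}_K=p\cl{O}_K\subseteq B$, contradicting the hypothesis), whence $p\cl{O}_K\subsetneq\pi\cl{O}_K$, and no $\bb{Z}_p$-module is the union of two proper submodules. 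Extend $u_1$ to a basis of $\cl{O}_K$; using $\cl{O}_K=B+\bb{Z}_pu_1$, replace each subsequent basis vector $u_i$ by a $\bb{Z}_p$-congruent element of $B$. Then the span of $(pu_1,u_2,\ldots,u_d)$ has index $p$ in $\cl{O}_K$ and must equal $B$.

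In the second round, I would reduce modulo $\pi\cl{O}_K$. Because $u_1\in\pi\cl{O}_K$, the images of $u_2,\ldots,u_d$ span the $f$-dimensional $\bb{F}_p$-vector space $\cl{O}_K/\pi\cl{O}_K$. After relabelling I may assume $\bar u_2,\ldots,\bar u_{f+1}$ is a basis; then for $i\ges f+2$ I replace $u_i$ by $u_i-\sum_{j=2}^{f+1}c_{ij}u_j$ with $c_{ij}\in\bb{Z}_p$ chosen so that $u_i\in\pi\cl{O}_K$. These replacements keep the $u_i$ inside $B$ and preserve the basis property of $(u_1,\ldots,u_d)$. The $\bb{Z}_p$-span of $(u_1,pu_2,\ldots,pu_{f+1},u_{f+2},\ldots,u_d)$ then lies in $\pi\cl{O}_K$ and has index $p^f$ in $\cl{O}_K$, so it coincides with $\pi\cl{O}_K$. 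The most delicate point in the whole argument is the existence of the primitive $u_1$; once that is granted, everything else is the kind of basis adjustment underlying the Smith normal form.
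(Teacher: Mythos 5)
Your proof is correct, but it takes a different route from the paper's. The paper parametrizes $B$ in coordinates with respect to the fixed basis $(\varepsilon_{ij})$ of $\cl{O}_K$ (as $B_{\mu,b}$ in the notation of Section \ref{sedvr}), explicitly rewrites a basis of $\pi\cl{O}_K$ in terms of the basis of $B$ in two cases distinguished by conditions on $(\mu,b)$, reads off the Smith form, and only at the end matches the two outcomes to the dichotomy $\pi\cl{O}_K\subseteq B$ versus $\pi\cl{O}_K\not\subseteq B$ via the observation that all exponents are nonnegative exactly in the first case. You instead argue directly in each case: for (1) you note that when $\pi\cl{O}_K\subseteq B$ the relative exponents are nonnegative and coincide with the elementary divisors of $B/\pi\cl{O}_K$, which is an index-$p$ subgroup of $\cl{O}_K/\pi\cl{O}_K\cong\bb{F}_{p^f}$, hence $(\bb{Z}/p)^{f-1}$, giving $(0^{d-f+1},1^{f-1})$; for (2) you build a simultaneous adapted basis, and the two key points -- that $\pi\cl{O}_K\not\subseteq B$ forces $e\ges 2$ (since $p\cl{O}_K\subseteq B$ always), and that one can pick $u_1\in\pi\cl{O}_K$ lying neither in $B$ nor in $p\cl{O}_K$ because a module is never the union of two proper submodules -- are both sound, and the subsequent basis adjustments (pushing $u_2,\dots,u_d$ into $B$, then pushing $u_{f+2},\dots,u_d$ into $\pi\cl{O}_K$) are legitimate elementary operations that preserve the relevant spans, with the index counts $[\cl{O}_K:B]=p$ and $[\cl{O}_K:\pi\cl{O}_K]=p^f$ pinning down both lattices. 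Your version is more structural and coordinate-free, and in particular explains conceptually why the exponent $-1$ appears (a primitive element of $\pi\cl{O}_K$ outside $B$); the paper's computation is heavier but stays within the $(\mu,b)$-parametrization that it reuses in neighboring lemmas (e.g.\ Lemma \ref{ledvr2} and Lemma \ref{ln2sinvN}), so the two proofs trade conceptual economy against uniformity of bookkeeping. The only point you leave implicit, which deserves a line, is that $u_1\notin p\cl{O}_K$ indeed lets you extend $u_1$ to a $\bb{Z}_p$-basis of $\cl{O}_K$ (Nakayama), but this is standard.
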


\begin{proof2}
We will show that the relative invariant exponents $s_{\bb{Z}_p}(B, \pi\cl{O}_K)$ are either equal to
$(0^{d-f+1}, 1^{f-1})$ or to $(-1, 0^{d-f-1}, 1^{f})$.
Since the invariant exponents are all nonnegative if and only if $\pi\cl{O}_K\subseteq B$, the result follows.
We use the letters $\lambda$ and $\mu$ for pairs of integers such as 
$\lambda = (\lambda_0,\lambda_1)$,
where $0\les \lambda_0<f$ and $0\les \lambda_1 <e$.
Recall the basis $(\vep_\lambda)$ of $\cl{O}_K$ over $\bb{Z}_p$ introduced above,
and consider the basis $(\gamma_\lambda)$ of $\pi\cl{O}_K$ over $\bb{Z}_p$ given in
Remark \ref{rdvr3}.
There exist $\mu$ and a sequence $(b_\lambda)_{\lambda \neq \mu}$ of elements of $\bb{Z}_p$
such that $(\beta_\lambda)$ is a basis of $B$ over $\bb{Z}_p$, where
$$
\beta_\lambda =
\left\{
\begin{array}{ll}

p\vep_\mu
& 
\mbox{if}\quad  \lambda=\mu
\\

\vep_\lambda +b_\lambda \vep_\mu 
& 
\mbox{if}\quad \lambda\neq \mu. 
\\
\end{array}
\right.
$$
The idea of the proof is to write the basis $(\gamma_\lambda)$
in coordinates with respect to $(\beta_\lambda)$ over $\bb{Q}_p$,
and then compute the Smith canonical form of the resulting 
change-of-basis matrix. 
We divide the proof into two cases.
\begin{enumerate}
\item 
Assume that $\mu_1 = 0$
and, for all $\lambda$, if $\lambda\neq \mu$ and $\lambda_1\neq 0$ then $b_\lambda\equiv_p 0$.
By defining 
$$
\delta_\lambda =
\left\{
\begin{array}{ll}
\gamma_\mu 
& 
\mbox{if}\quad \lambda = \mu 
\\

\gamma_\lambda + b_\lambda \gamma_\mu 
& 
\mbox{if}\quad \lambda\neq \mu \mbox{ and }\lambda_1=0 
\\

\gamma_\lambda + p^{-1}b_\lambda \gamma_\mu 
& 
\mbox{if}\quad \lambda\neq \mu \mbox{ and }\lambda_1\neq 0,
\\
\end{array}
\right.
$$
one proves that $(\delta_\lambda)$ is a basis of $\pi\cl{O}_K$ over $\bb{Z}_p$
and 
$$
\delta_\lambda =
\left\{
\begin{array}{ll}
\beta_\mu
& 
\mbox{if}\quad \lambda = \mu 
\\

p\beta_\lambda
& 
\mbox{if}\quad \lambda\neq\mu \mbox{ and }\lambda_1=0 
\\

\beta_\lambda
& 
\mbox{if}\quad \lambda\neq \mu \mbox{ and }\lambda_1\neq 0, 
\\
\end{array}
\right.
$$
from which the invariant exponents are computed
to be $(0^{d-f+1}, 1^{f-1})$.

\item 
Assume that $\mu_1\neq 0$ or there exists $\lambda$ such that
$\lambda\neq \mu$, $\lambda_1\neq 0$, and $b_\lambda\not\equiv_p 0$. 
The proof is similar to the one of the first case after defining
$$
\delta_\lambda =
\left\{
\begin{array}{ll}
\gamma_\mu 
& 
\mbox{if}\quad \lambda = \mu 
\\

\gamma_\lambda + pb_\lambda \gamma_\mu 
& 
\mbox{if}\quad \lambda\neq\mu \mbox{ and }\lambda_1=0 
\\

\gamma_\lambda + b_\lambda \gamma_\mu 
& 
\mbox{if}\quad \lambda\neq\mu \mbox{ and }\lambda_1\neq 0,
\\
\end{array}
\right.
$$
and the invariant exponents are $(-1, 0^{d-f-1}, 1^{f})$.\ep 
\end{enumerate}
\end{proof2}

\bigskip

In the following lemma, the indices take values $0\les \lambda_0,\mu_0<f$
and 
$0\les \lambda_1,\mu_1<e$.

\begin{lemma}
\label{ledvr2}
Given $(\mu_0,\mu_1)$ and a sequence 
$(b_{\lambda_0\lambda_1})_{(\lambda_0,\lambda_1)\neq (\mu_0,\mu_1)}$
of elements of $\bb{Z}_p$,
define
$\beta_{\mu_0\mu_1}
=p\varepsilon_{\mu_0\mu_1}$
and 
$\beta_{\lambda_0\lambda_1}
= 
\varepsilon_{\lambda_0\lambda_1} +b_{\lambda_0\lambda_1} \varepsilon_{\mu_0\mu_1}$ 
if  
$(\lambda_0,\lambda_1)\neq (\mu_0,\mu_1)$.
Let $B$ be the span of $(\beta_{\lambda_0\lambda_1})$ over $\bb{Z}_p$.
Then the following are equivalent.
\begin{enumerate}
\item 
\label{ledvr2.1}
$B\subseteq \pi\cl{O}_K$.
\item 
\label{ledvr2.2}
$f=1$, $\mu_1 =0$, and 
for all $(\lambda_0,\lambda_1)$ with $(\lambda_0,\lambda_1)\neq (\mu_0,\mu_1)$
and $\lambda_1\neq 0$ we have $b_{\lambda_0\lambda_1}\equiv_p 0$.
\end{enumerate}
\end{lemma}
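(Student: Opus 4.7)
The plan is to exploit the two numerical constraints $[\cl{O}_K:B]=p$ (which follows from the description of $B$ via the basis $(\beta_{\lambda_0\lambda_1})$ together with Lemma~\ref{ldvr1} applied to the double-indexed basis $(\vep_{\lambda_0\lambda_1})$) and $[\cl{O}_K:\pi\cl{O}_K]=p^f$ to force $f=1$, and then to verify the remaining conditions by a direct coordinate calculation in the resulting totally ramified situation.

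For $(\ref{ledvr2.1})\Rightarrow(\ref{ledvr2.2})$: first, $B\subseteq \pi\cl{O}_K$ together with $[\cl{O}_K:B]=p$ and $[\cl{O}_K:\pi\cl{O}_K]=p^f$ gives $p^f\,|\,p$, so $f=1$ (this also matches Remark~\ref{redvr1}). With $f=1$, the only value of the first index is $0$, and $\vep_{0,j}=\pi^j$ for $0\les j<e=d$. Next I would rule out $\mu_1\neq 0$ by inspecting the basis element $\beta_{0,0}=1+b_{0,0}\pi^{\mu_1}$ (which exists precisely because $(0,0)\neq (0,\mu_1)=\mu$): its reduction mod $\pi$ equals $1$, so $\beta_{0,0}\notin \pi\cl{O}_K$, a contradiction. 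Hence $\mu_1=0$. Finally, for each $\lambda_1\neq 0$ the basis element is $\beta_{0,\lambda_1}=\pi^{\lambda_1}+b_{0,\lambda_1}$, which lies in $\pi\cl{O}_K$ only if $b_{0,\lambda_1}\in \pi\cl{O}_K\cap \bb{Z}_p=p\bb{Z}_p$ (using $v_K(p)=e\ges 1$), i.e.\ $b_{0,\lambda_1}\equiv_p 0$. This yields condition (\ref{ledvr2.2}).

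For $(\ref{ledvr2.2})\Rightarrow(\ref{ledvr2.1})$: assume the three conditions. Under $f=1$ and $\mu_1=0$ one has $\beta_{0,0}=p\vep_{0,0}=p\in p\cl{O}_K\subseteq \pi\cl{O}_K$, while for every $\lambda_1\neq 0$ both $\pi^{\lambda_1}$ and $b_{0,\lambda_1}\in p\bb{Z}_p$ lie in $\pi\cl{O}_K$, so $\beta_{0,\lambda_1}\in \pi\cl{O}_K$. Since the $\beta$'s span $B$ over $\bb{Z}_p$, we conclude $B\subseteq \pi\cl{O}_K$.

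There is no serious obstacle here; the only thing to keep in mind is the bookkeeping between valuations in $\bb{Z}_p$ and in $\cl{O}_K$ after the reduction to $f=1$, and the fact that $\mu_1=0$ is precisely what prevents the ``leading'' basis element $\beta_{0,0}$ (with $\lambda=(0,0)\neq \mu$) from having nonzero reduction modulo $\pi$.
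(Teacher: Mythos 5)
Your proof is correct, and the direction (\ref{ledvr2.2}) $\Rightarrow$ (\ref{ledvr2.1}) coincides with the paper's argument. For (\ref{ledvr2.1}) $\Rightarrow$ (\ref{ledvr2.2}) you take a slightly different, more direct route: the paper argues by contrapositive, exhibiting $\beta_{00}=1+b_{00}\pi^{\mu_1}\notin\pi\cl{O}_K$ when $\mu_1\neq 0$, and, in the remaining case where $\mu_1=0$ but some $b_{\lambda_0\lambda_1}\not\equiv_p 0$ with $\lambda_1\neq 0$, invoking Lemma \ref{ldvr1} to re-parametrize $B$ so that the new $\mu_1$ is nonzero, reducing to the first case. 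You instead observe that, once $f=1$ is forced by the index comparison (exactly Remark \ref{redvr1}), the hypothesis $B\subseteq\pi\cl{O}_K$ applies to every generator: $\beta_{00}\equiv 1 \pmod{\pi}$ rules out $\mu_1\neq 0$, and then $\beta_{0\lambda_1}=\pi^{\lambda_1}+b_{0\lambda_1}\in\pi\cl{O}_K$ gives $b_{0\lambda_1}\in\pi\cl{O}_K\cap\bb{Z}_p=p\bb{Z}_p$ for $\lambda_1\neq 0$. This bypasses the re-parametrization step and the use of Lemma \ref{ldvr1} altogether, a small but genuine simplification; the intersection $\pi\cl{O}_K\cap\bb{Z}_p=p\bb{Z}_p$ is indeed correct since $v_K$ restricts to $e\,v_p$ on $\bb{Q}_p$. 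One citation quibble: the equality $[\cl{O}_K:B]=p$ that you feed into the argument for $f=1$ is not what Lemma \ref{ldvr1} provides (that lemma compares two parametrizations of the same submodule); it follows from the construction itself, since the change of basis from $(\vep_{\lambda_0\lambda_1})$ to $(\beta_{\lambda_0\lambda_1})$ has determinant $p$ up to a unit, so that $B$ is a maximal proper $\bb{Z}_p$-submodule of $\cl{O}_K$, which is precisely the situation of Remark \ref{redvr1} that you also cite; so nothing essential is missing.
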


\begin{proof}
Observe that when $f=1$ the index $\lambda_0$ only takes the value $0$.
We first prove that (\ref{ledvr2.2}) implies (\ref{ledvr2.1}).
Assume (\ref{ledvr2.2}).
Since $f=1$, we can write a generic element $\gamma$ of $B$ as
$\sum_{\lambda_1} c_{\lambda_1}\beta_{0\lambda_1}$
for some sequence $(c_{\lambda_1})$ of elements of $\bb{Z}_p$. 
Since $\mu_1=0$, 
we have
$$
\gamma 
=
 c_0\beta_{00} +
\sum_{\lambda_1\neq 0}c_{\lambda_1}\beta_{0\lambda_1}
=
c_0p+
\sum_{\lambda_1\neq 0}c_{\lambda_1} (\pi^{\lambda_1}+b_{0\lambda_1}).
$$
Since $b_{0\lambda_1}\equiv_p 0$ for $\lambda_1\neq 0$
and since $\pi$ divides $p$ in $\cl{O}_K$, we see that $\gamma$
is a multiple of $\pi$ in $\cl{O}_K$.

We now prove that (\ref{ledvr2.1}) implies (\ref{ledvr2.2}) by contrapositive.
By Remark \ref{redvr1}, if $f\neq 1$ we have $B\not\subseteq \pi\cl{O}_K$.
Assume that $f=1$. If $\mu_1\neq 0$ then $\beta_{00}=1+b_{00}\pi^{\mu_1}$
is in $B$ but not in $\pi\cl{O}_K$. 
If there exists $(\lambda_0,\lambda_1)$ such that $(\lambda_0,\lambda_1)\neq (\mu_0,\mu_1)$, 
$\lambda_1\neq 0$, and $b_{\lambda_0\lambda_1}\not\equiv_p 0$
then, by Lemma \ref{ldvr1}, one can reduce the proof to the case where
$\mu_1\neq 0$.
\end{proof}

\vspace{7mm}

Let $L$ be an $\cl{O}_K$-lattice, and let $n=\mr{dim}_{\cl{O}_K}(L)$.

\begin{remark}
\label{rf2noO_K}
If $M$ is a proper $\cl{O}_K$-submodule of $L$ then $[L:M]\ges p^f$.
It follows that,
for $f\ges 2$, no maximal proper $\bb{Z}_p$-submodule of $L$ is an $\cl{O}_K$-submodule of $L$.
Moreover, let $N$ be a maximal proper $\bb{Z}_p$-submodule of $L$.
If $N$ is an $\cl{O}_K$-submodule of $L$ then $f=1$ and $N$ is a maximal proper $\cl{O}_K$-submodule of $L$.
\end{remark}

We adapt the general notation presented in Section \ref{sdvr} in order to 
treat  the maximal proper $\bb{Z}_p$-submodules 
of $L$. 
We consider indices $\lambda=(\lambda_0,\lambda_1,\lambda_2)$ 
with $0\les \lambda_0<f$, $0\les \lambda_1 <e$, and $0\les \lambda_2 <n$,
and similarly for $\mu$ and $\nu$.
Let $(e_{\lambda_2})$ be a basis of $L$ over $\cl{O}_K$. 
We define $x_\lambda = \varepsilon_{\lambda_0\lambda_1}e_{\lambda_2}$,
so that $(x_\lambda)$ is a basis of $L$ over $\bb{Z}_p$.
Given an index $\mu$ and a sequence $b=(b_\lambda)_{\lambda\neq \mu}$
of elements of $\bb{Z}_p$,
we define 
$$
\begin{array}{rcl}
y_\mu 
&=& 
px_\mu
\\

y_\lambda 
&=& 
x_\lambda +b_\lambda x_\mu \quad\mbox{ if } \lambda \neq \mu,
\end{array}
$$
and 
$N_{\mu,b}$ to be the span  of $(y_\lambda)$ over $\bb{Z}_p$.
Hence, $N_{\mu,b}$ is a maximal proper $\bb{Z}_p$-submodule of $L$,
and any maximal proper $\bb{Z}_p$-submodule of $L$ is equal to $N_{\mu,b}$ for some pair $(\mu, b)$.
Given a pair $(\mu, b)$, we define 
$$
\begin{array}{rcl}
\beta_{\mu_0\mu_1}
&=& 
p\varepsilon_{\mu_0\mu_1}
\\

\beta_{\lambda_0\lambda_1}
&=& 
\varepsilon_{\lambda_0\lambda_1} +b_{\lambda_0\lambda_1\mu_2} \varepsilon_{\mu_0\mu_1} 
\quad\mbox{ if } (\lambda_0,\lambda_1) \neq (\mu_0,\mu_1),
\end{array}
$$
and 
$B_{\mu,b}$ to be the span over $\bb{Z}_p$ of $(\beta_{\lambda_0\lambda_1})$.
Hence, $B_{\mu,b}$ is a maximal proper $\bb{Z}_p$-submodule of $\cl{O}_K$.

\begin{lemma}
\label{ledvr5}
Let $N$ be a maximal proper $\bb{Z}_p$-submodule of $L$.
Consider the following conditions.
\begin{enumerate}
\item 
\label{ledvr5.1}
For all $(\mu, b)$ such that $N=N_{\mu,b}$ we have $B_{\mu,b}\subseteq \pi\cl{O}_K$.
\item 
\label{ledvr5.2}
$f=1$ and for all $(\mu, b)$ such that $N=N_{\mu,b}$
the following holds:
$\mu_1 =0$ and 
for all $\lambda$ with $\lambda_1\neq 0$ we have $b_\lambda\equiv_p 0$.
\item 
\label{ledvr5.3}
$f=1$ and there exists $(\mu, b)$ such that $N=N_{\mu,b}$,
$\mu_1 =0$, and
for all $\lambda$ with $\lambda_1\neq 0$ we have $b_\lambda\equiv_p 0$.
\item 
\label{ledvr5.4}
$N$ is an $\cl{O}_K$-submodule of $L$. 
\end{enumerate}
Then (\ref{ledvr5.1}) $\Rightarrow$ (\ref{ledvr5.2}) $\Rightarrow$ (\ref{ledvr5.3}) $\Rightarrow$ (\ref{ledvr5.4}).
\end{lemma}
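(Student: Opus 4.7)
The plan is to traverse the three implications in order, leveraging the parametrization $(\mu,b) \mapsto N_{\mu,b}$ established above together with Lemma \ref{ldvr1} and Lemma \ref{ledvr2}.

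For (\ref{ledvr5.1}) $\Rightarrow$ (\ref{ledvr5.2}), I would fix a representation $N = N_{\mu, b}$ and apply Lemma \ref{ledvr2} to $B_{\mu, b}$; this immediately yields $f = 1$, $\mu_1 = 0$, and $b_{0 \lambda_1 \mu_2} \equiv_p 0$ whenever $\lambda_1 \neq 0$. This controls only the coefficients $b_\lambda$ with $\lambda_2 = \mu_2$, so the main point is to extend the vanishing to all $\lambda$ with $\lambda_1 \neq 0$. I would argue by contradiction: if $b_\lambda \not\equiv_p 0$ for some $\lambda$ with $\lambda_1 \neq 0$, Lemma \ref{ldvr1}(2) permits switching the pivot from $\mu$ to $\lambda$, producing a new pair $(\nu, c)$ with $\nu = \lambda$ and $N = N_{\nu, c}$. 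By hypothesis (\ref{ledvr5.1}) we have $B_{\nu, c} \subseteq \pi \cl{O}_K$, and Lemma \ref{ledvr2} applied to this new representation forces $\nu_1 = 0$, contradicting $\nu_1 = \lambda_1 \neq 0$.

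The implication (\ref{ledvr5.2}) $\Rightarrow$ (\ref{ledvr5.3}) is immediate: every maximal proper $\bb{Z}_p$-submodule of $L$ admits at least one presentation $N = N_{\mu, b}$, and the universal condition (\ref{ledvr5.2}) specializes to (\ref{ledvr5.3}) for any such pair.

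For (\ref{ledvr5.3}) $\Rightarrow$ (\ref{ledvr5.4}), since $f = 1$ gives $\cl{O}_K = \bb{Z}_p[\pi]$, it suffices to show $\pi N \subseteq N$. The case $e = 1$ is trivial because then $\pi = pu$ for a unit $u$ and thus $\pi L \subseteq pL \subseteq N$, so assume $e \geq 2$. Since $y_\mu = p x_\mu$ we have $\pi y_\mu \in pL \subseteq N$, and for $\lambda \neq \mu$ I need $\pi y_\lambda = \pi x_\lambda + b_\lambda \pi x_\mu \in N$. Using $x_\lambda = \pi^{\lambda_1} e_{\lambda_2}$ (since $f = 1$), if $\lambda_1 + 1 < e$ then $\pi x_\lambda = x_{\lambda'}$ with $\lambda' = (0, \lambda_1 + 1, \lambda_2)$ having $\lambda_1' \neq 0$; the hypothesis $b_{\lambda'} \equiv_p 0$ lets me write $b_{\lambda'} = p c$, whence $x_{\lambda'} = y_{\lambda'} - c y_\mu \in N$. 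Otherwise $\lambda_1 + 1 = e$ and $\pi x_\lambda = \pi^e e_{\lambda_2} \in pL \subseteq N$. The same reasoning applied to $\mu$ (using $\mu_1 = 0$, so $\pi x_\mu = x_{(0, 1, \mu_2)}$, whose coefficient also vanishes mod $p$) shows $\pi x_\mu \in N$. Hence $\pi y_\lambda \in N$ and the conclusion follows. The mildly subtle step throughout is the pivot-switching in the first implication, which is what turns the $\lambda_2 = \mu_2$ information of Lemma \ref{ledvr2} into a statement about all $\lambda_2$.
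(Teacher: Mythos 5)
Your proof is correct, and while the (\ref{ledvr5.1}) $\Rightarrow$ (\ref{ledvr5.2}) step is essentially the paper's argument (both rely on switching the pivot via Lemma~\ref{ldvr1} and then invoking Lemma~\ref{ledvr2}, you phrasing it as a contradiction and the paper as a contrapositive), your treatment of (\ref{ledvr5.3}) $\Rightarrow$ (\ref{ledvr5.4}) is genuinely different. The paper explicitly constructs the maximal proper $\cl{O}_K$-submodule $M = \mr{span}_{\cl{O}_K}(z_{\lambda_2})$ with $z_{\mu_2}=\pi e_{\mu_2}$ and $z_{\lambda_2}=e_{\lambda_2}+b_{00\lambda_2}e_{\mu_2}$, shows all generators $y_\lambda$ of $N$ lie in $M$, and concludes $N=M$ by maximality. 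You instead prove $\pi N\subseteq N$ directly, using $\cl{O}_K=\bb{Z}_p[\pi]$ when $f=1$: $\pi y_\mu\in pL\subseteq N$; $\pi x_\lambda$ is either $x_{\lambda'}$ with $\lambda'_1\neq 0$ (so the hypothesis $b_{\lambda'}\equiv_p 0$ gives $x_{\lambda'}=y_{\lambda'}-c\,y_\mu\in N$) or lands in $\pi^e L\subseteq pL\subseteq N$; and $\pi x_\mu=x_{(0,1,\mu_2)}$ is handled the same way. Your approach avoids naming the ambient $\cl{O}_K$-module and is somewhat more elementary; the paper's approach yields the concrete identity $N=M$, which is more informative but requires a couple of extra observations ($\pi\mid p$ in $\cl{O}_K$, $\pi L\subseteq M$, and the maximality comparison). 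Both are valid; a minor polish in your write-up would be to make explicit that the fixed pair $(\mu,b)$ in the first implication is arbitrary, so the conclusion applies to every representation of $N$, as required by condition (\ref{ledvr5.2}), and to note that $\lambda'\neq\mu$ since $\lambda'_1\neq 0=\mu_1$.
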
 

\begin{proof}
Observe that (\ref{ledvr5.2}) $\Rightarrow$ (\ref{ledvr5.3}) is immediate.
We start by proving (\ref{ledvr5.1}) $\Rightarrow$ (\ref{ledvr5.2}).
The claim that $f=1$, under assumption (\ref{ledvr5.1}), follows from Remark \ref{redvr1}.
Assume by contrapositive that there exists $(\mu,b)$ such that $N=N_{\mu,b}$ and either
$\mu_1 \neq 0$ or there exists $\nu$ with $\nu_1\neq 0$ and $b_\nu\not\equiv_p 0$. 
In case $\mu_1\neq 0$, Lemma \ref{ledvr2} implies that 
$B_{\mu,b}\not\subseteq \pi\cl{O}_K$.
In case there exists $\nu_1$ with $\nu_1\neq 0$ and $b_\nu\not\equiv_p 0$, Lemma \ref{ldvr1}
implies that $N=N_{\nu, c}$ for some $c$, and we have 
$B_{\nu,c}\not\subseteq \pi\cl{O}_K$ again by Lemma \ref{ledvr2}.

We now prove that (\ref{ledvr5.3}) $\Rightarrow$ (\ref{ledvr5.4}).
Assume (\ref{ledvr5.3}), and 
observe that, since $f=1$, we have $\lambda_0=0$ for all $\lambda$.
Given $(\mu,b)$ with the properties of assumption (\ref{ledvr5.3}),
we define $z_{\mu_2}=\pi e_{\mu_2}$ and 
$z_{\lambda_2}= e_{\lambda_2}+b_{00\lambda_2}e_{\mu_2}$ for $\lambda_2\neq \mu_2$.
Moreover, we define $M$ to be the span of $(z_{\lambda_2})$ over $\cl{O}_K$,
so that $M$ is a maximal proper $\cl{O}_K$-submodule of $L$.
We will show that $N=M$.
Since $M$ is a proper $\bb{Z}_p$-submodule of $L$, it is enough to show
that the generators $y_\lambda$ of $N$ are in $M$. We have
$$ 
\begin{array}{rcll}
y_\mu &=& pe_{\mu_2}
&
\\
y_\lambda &=& z_{\lambda_2} &\mbox{if }\lambda_1=0\mbox{ and }\lambda_2\neq\mu_2
\\
y_\lambda &=& \pi^{\lambda_1}e_{\lambda_2}+b_{0\lambda_1\lambda_2}e_{\mu_2}&\mbox{if }\lambda_1\neq 0.
\end{array}
$$
By assumption, $\pi$ divides $b_{0\lambda_1\lambda_2}$ for $\lambda_1\neq 0$;
also, observe that $\pi$ divides $p$ in $\cl{O}_K$ and $\pi L\subseteq M$. 
Hence, any generator of $N$ is in $M$, since
it is either a generator of $M$ as well, or a multiple of $\pi$ in $L$.
\end{proof}

\begin{remark}
The conditions in the statment of Lemma \ref{ledvr5} are equivalent.
Since we will not need this fact, we leave the proof to the reader.
\end{remark}

\begin{lemma}
\label{lnotO_KNNLL}
Let $L$ be an $\cl{O}_K$-Lie lattice, and let $N$ be a maximal proper $\bb{Z}_p$-submodule of $L$.
If $N$ is not an $\cl{O}_K$-submodule of $L$ then $[N,N]=[L,L]$.
\end{lemma}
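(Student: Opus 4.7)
\emph{Plan.} The starting point is Lemma~\ref{ledvr5}: since $N$ is not an $\cl{O}_K$-submodule of $L$, the contrapositive of the implication (\ref{ledvr5.1}) $\Rightarrow$ (\ref{ledvr5.4}) yields a pair $(\mu,b)$ with $N=N_{\mu,b}$ and $B_{\mu,b}\not\subseteq\pi\cl{O}_K$. Because $\cl{O}_K$ is a local ring with maximal ideal $\pi\cl{O}_K$, every element of $\cl{O}_K\setminus\pi\cl{O}_K$ is a unit, so $B_{\mu,b}$ contains a unit $\alpha$. Inspecting the definitions of $y_\lambda$ and $\beta_{\lambda_0\lambda_1}$ one sees that, for every $(\lambda_0,\lambda_1)$, the generator $y_{(\lambda_0,\lambda_1,\mu_2)}$ of $N$ equals $\beta_{\lambda_0\lambda_1}\cdot e_{\mu_2}$; consequently $B_{\mu,b}\cdot e_{\mu_2}\subseteq N$, and in particular $\alpha' e_{\mu_2}\in N$ for every $\alpha'\in B_{\mu,b}$.

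With this in hand, I would run two bracket computations, using the $\cl{O}_K$-bilinearity of $[\cdot,\cdot]$. For any $\lambda=(\lambda_0,\lambda_1,\lambda_2)$ with $\lambda_2\neq\mu_2$ (so $\lambda\neq\mu$),
\[
[\alpha' e_{\mu_2},\, y_\lambda]\;=\;\alpha'\,\vep_{\lambda_0\lambda_1}\,[e_{\mu_2},e_{\lambda_2}]\quad\text{for every } \alpha'\in B_{\mu,b}.
\]
As $\alpha'$ ranges over $B_{\mu,b}$ and $(\lambda_0,\lambda_1)$ over all pairs, the $\bb{Z}_p$-span of the coefficients $\alpha'\vep_{\lambda_0\lambda_1}$ equals $\alpha\cl{O}_K=\cl{O}_K$, since $\alpha$ is a unit. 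Hence
\[
\cl{O}_K\cdot [e_{\mu_2},e_k]\;\subseteq\;[N,N]\quad\text{for every } k\neq\mu_2,
\]
which already gives $[L,L]=[N,N]$ when $n\les 2$.

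For $n\ges 3$, choose distinct $k,l\neq\mu_2$ and any $(\sigma_0,\sigma_1)$, and set $\lambda=(\sigma_0,\sigma_1,k)$, $\lambda'=(0,0,l)$; both differ from $\mu$. Expanding $[y_\lambda,y_{\lambda'}]$, the cross-terms involve $[e_k,e_{\mu_2}]$ and $[e_{\mu_2},e_l]$ and hence lie in $[N,N]$ by the previous step, leaving $\vep_{\sigma_0\sigma_1}[e_k,e_l]\in[N,N]$. Letting $(\sigma_0,\sigma_1)$ vary yields $\cl{O}_K[e_k,e_l]\subseteq [N,N]$. Combining the two steps, $\cl{O}_K[e_i,e_j]\subseteq [N,N]$ for all $i,j$, and since $[L,L]$ is generated as an $\cl{O}_K$-module by such brackets, $[L,L]\subseteq [N,N]$; the reverse inclusion is obvious. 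The only slightly subtle point is extracting the unit $\alpha\in B_{\mu,b}$ from the local-ring structure of $\cl{O}_K$ and then using it to $\cl{O}_K$-saturate the scalar coefficients in the first bracket computation; once this is done, the remaining manipulations with the generators $y_\lambda$ are routine.
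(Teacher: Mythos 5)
Your proposal is correct and follows essentially the same route as the paper's proof: it invokes Lemma~\ref{ledvr5} to produce $(\mu,b)$ with $B_{\mu,b}\not\subseteq\pi\cl{O}_K$, uses a unit of $B_{\mu,b}$ together with the identity $y_{(\lambda_0,\lambda_1,\mu_2)}=\beta_{\lambda_0\lambda_1}e_{\mu_2}$ and $[e_{\mu_2},e_{\mu_2}]=0$ to get $\cl{O}_K[e_{\mu_2},e_k]\subseteq[N,N]$, and then the same $[y_{(\sigma_0,\sigma_1,k)},y_{(0,0,l)}]$ expansion (subtracting cross-terms already in $[N,N]$) for brackets avoiding $\mu_2$. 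The only cosmetic difference is that you work with a single unit $\alpha$ and $\alpha\cl{O}_K=\cl{O}_K$, where the paper expands a unit $\delta\in B\cap\cl{O}_K^*$ and an arbitrary $\gamma\in\cl{O}_K$ in the respective bases; the content is identical.
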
 

\begin{proof2}
Assume that $N$ is not an $\cl{O}_K$-submodule of $L$,
and let $(e_{\lambda_2})$ be a basis of $L$ over $\cl{O}_K$.
By Lemma \ref{ledvr5}, there exists $(\mu, b)$ such that $N=N_{\mu,b}$ and 
$B:=B_{\mu,b}\not\subseteq \pi\cl{O}_K$.
We have $[L,L]=\sum_{\lambda_2,\nu_2: \lambda_2\neq \nu_2} \cl{O}_K[e_{\lambda_2},e_{\nu_2}]$.
Since it is clear that $[N,N]\subseteq [L,L]$, it is enough to prove that 
$\cl{O}_K[e_{\lambda_2},e_{\nu_2}]\subseteq [N,N]$ for all $\lambda_2$ and $\nu_2$ with $\lambda_2\neq \nu_2$.
We take $\lambda_2$ and $\nu_2$ with $\lambda_2\neq \nu_2$, and 
we divide the proof into two cases.
\begin{enumerate}
\item 
Assume that $\lambda_2 = \mu_2$ or $\nu_2= \mu_2$. 
Without loss of generality, we can assume that $\lambda_2=\mu_2$.
Let $z\in \cl{O}_K[e_{\mu_2},e_{\nu_2}]$,
and let $\delta\in B\cap \cl{O}_K^*$.
Then there exists $\gamma\in\cl{O}_K$ such that $z = [\delta e_{\mu_2}, \gamma e_{\nu_2}]$.
We have 
$\gamma= \sum_{\nu_0,\nu_1} c_{\nu_0\nu_1} \varepsilon_{\nu_0\nu_1}$
and 
$\delta =\sum_{\lambda_0,\lambda_1} d_{\lambda_0\lambda_1} \beta_{\lambda_0\lambda_1}$
for some indexed sets $(c_{\nu_0\nu_1})$ and $( d_{\lambda_0\lambda_1})$ of elements of $\bb{Z}_p$.
Hence,
$$z
=
\sum_{\lambda_0,\lambda_1,\nu_0,\nu_1} 
d_{\lambda_0\lambda_1}c_{\nu_0\nu_1}
[\beta_{\lambda_0\lambda_1}e_{\mu_2},  \varepsilon_{\nu_0\nu_1}e_{\nu_2}].
$$ 
Since $[\beta_{\lambda_0\lambda_1}e_{\mu_2},  \varepsilon_{\nu_0\nu_1}e_{\nu_2}]=
[\beta_{\lambda_0\lambda_1}e_{\mu_2},  \varepsilon_{\nu_0\nu_1}e_{\nu_2}+b_{\nu_0\nu_1\nu_2} \varepsilon_{\mu_0\mu_1}e_{\mu_2}]
=[y_{\lambda_0\lambda_1\mu_2}, y_{\nu_0\nu_1\nu_2}]$,
we have $z\in [N,N]$.

\item 
Assume that $\lambda_2 \neq \mu_2$ and $\nu_2\neq \mu_2$. 
Let $z\in \cl{O}_K[e_{\lambda_2},e_{\nu_2}]$, so that
$z=\gamma [e_{\lambda_2},e_{\nu_2}]$ for some $\gamma\in\cl{O}_K$. 
We have 
$\gamma= \sum_{\lambda_0,\lambda_1} c_{\lambda_0\lambda_1} \varepsilon_{\lambda_0\lambda_1}$
for some indexed set $(c_{\lambda_0\lambda_1})$ of elements of $\bb{Z}_p$.
Observe that 
$[y_{\lambda_0\lambda_1\lambda_2}, y_{00\nu_2}]= 
\varepsilon_{\lambda_0\lambda_1}[e_{\lambda_2},e_{\nu_2}]
+
b_{00\nu_2}\varepsilon_{\lambda_0\lambda_1}\varepsilon_{\mu_0\mu_1}[e_{\lambda_2},e_{\mu_2}]
+
b_{\lambda_0\lambda_1\lambda_2}\varepsilon_{\mu_0\mu_1}[e_{\mu_2},e_{\nu_2}]
$.
Hence,
\begin{eqnarray*}
\sum_{\lambda_0,\lambda_1}c_{\lambda_0\lambda_1}[y_{\lambda_0\lambda_1\lambda_2}, y_{00\nu_2}]
&=&
\gamma[e_{\lambda_2},e_{\nu_2}]+
b_{00\nu_2}\gamma\varepsilon_{\mu_0\mu_1}[e_{\lambda_2},e_{\mu_2}]+
\\
&&
\left(\sum_{\lambda_0,\lambda_1}c_{\lambda_0\lambda_1}b_{\lambda_0\lambda_1\lambda_2}\right)
\varepsilon_{\mu_0\mu_1}[e_{\mu_2},e_{\nu_2}].
\end{eqnarray*}
By definition, the sum on the left-hand side is in $[N,N]$;
also, by the previous item, the last two terms on the right-hand side are in $[N,N]$.
Since the first term on the right-hand side is $z$, it follows that $z\in[N,N]$.\ep 
\end{enumerate}
\end{proof2}


\subsection{Comments on division algebras}
\label{divalg}

By keeping the notation of the context of Theorem \ref{tmainalt} 
which is stated in the Introduction,
we let $D$ be a finite dimensional central division $K$-algebra,
where $K$ is a finite field extension of $\bb{Q}_p$.
We recall that $n$ denotes the index of $D$ over $K$,
$\cl{O}_K$ denotes the integral closure of $\bb{Z}_p$ in $K$, and
$\Delta$ denotes the unique maximal $\cl{O}_K$-order in $D$.
Moreover, we let $\pi\in \cl{O}_K$ be a uniformizing parameter.
We are going review some of the structure properties of $D$
and collect some lemmas for later use.
Specific results for $n=2$ and $n\ges 3$ will be given in Section 
\ref{sn2} and Section \ref{sO_K}, respectively. 

From the general theory
we know that there exist $F$, $\theta$, and $\Pi$ with the following properties; 
see, for instance, \cite[Section 14]{ReiMaxOrd}.

\begin{enumerate}
\item 
$K\subseteq F\subseteq D$ and $F$ is a unital $K$-subalgebra of $D$
and an unramified field extension of $K$ of degree $n$; 
moreover, $F/K$ is Galois and cyclic.

\item 
$\theta$ is a generator of $\mr{Gal}(F/K)$,
$\Pi\in D$, $\Pi^n=\pi$, and $\Pi\alpha=\theta(\alpha)\Pi$ for all 
$\alpha\in F$.

\item 
$D$ is generated by $F$ and $\Pi$ as a unital $K$-algebra.
More precisely,
$$
D = 
\bigoplus_{0\les j <n} F\Pi^j.
$$

\item 
Let $\cl{O}_F$ be the integral closure of $\cl{O}_K$ in $F$.
Then  
$$
\Delta = 
\bigoplus_{0\les j <n} \cl{O}_F\Pi^j.
$$

\item 
Let $t_{D/K}:D\rar K$ be the reduced-trace map, and let $T_{F/K}:F\rar K$ be the trace map.
Let $x =\sum_{j} \alpha_j\Pi^j$, where $\alpha_j\in F$, be an element of $D$.
Then $t_{D/K}(x)=T_{F/K}(\alpha_0)$. 
\end{enumerate}

\noindent 
We denote the residue fields of $\cl{O}_K$ and $\cl{O}_F$
by $\kappa_K=\cl{O}_K/\pi\cl{O}_K$ and $\kappa_F =\cl{O}_F/\pi\cl{O}_F$,
respectively,
and by 
$v_K:K\rar \bb{Z}\cup\{\infty\}$
and $v_F:F\rar \bb{Z}\cup\{\infty\}$
the respective valuations.
We denote by $\mr{pr}:\cl{O}_F\rar \kappa_F$ the projection,
and we adopt the notation $\overline{\alpha}=\mr{pr}(\alpha)$
when convenient. 
The extension $\kappa_F/\kappa_K$ has degree $n$. 
Let $F^0$ be the kernel of the trace map $F\rar K$,
and define $\cl{O}_F^0=\cl{O}_F\cap F^0$. 
Let $\kappa_F^0$ be the kernel of the trace map $\kappa_F\rar\kappa_K$.
Observe that $F^0$ and $\kappa_F^0$ have dimension
$n-1$ over $K$ and $\kappa_K$, respectively.

The following lemma is an immediate consequence of the properties listed above,
and it will be used constantly in Section \ref{sO_K}.
From the lemma we see that $D$ is graded over 
$\bb{Z}/n\bb{Z}$ as a $K$-Lie algebra.

\begin{lemma}
\label{lcomm3}
For all $j,k\in \bb{Z}$ and $\alpha,\beta\in F$
we have 
$
[\alpha\Pi^j, \beta\Pi^k]=(\alpha\theta^j(\beta)-\beta\theta^k(\alpha))\Pi^{j+k}
$,
where $[\,\cdot\, , \cdot\,]$ denotes the commutator in $D$. 
\end{lemma}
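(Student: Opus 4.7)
The plan is to reduce everything to the basic commutation relation $\Pi\alpha = \theta(\alpha)\Pi$ from property (2), which is the single structural input we need. First I would establish by induction on $|j|$ that $\Pi^{j}\alpha = \theta^{j}(\alpha)\Pi^{j}$ for every integer $j$ and every $\alpha\in F$. The non-negative case is an immediate induction: $\Pi^{j+1}\alpha = \Pi^{j}\Pi\alpha = \Pi^{j}\theta(\alpha)\Pi = \theta^{j+1}(\alpha)\Pi^{j+1}$. For negative $j$, I would note that $\Pi$ is invertible in $D$ (since $D$ is a division algebra and $\Pi^{n}=\pi\neq 0$, so $\Pi^{-1} = \pi^{-1}\Pi^{n-1}$); applying $\Pi^{-1}$ on both sides of $\Pi\theta^{-1}(\alpha) = \alpha\Pi$ gives $\theta^{-1}(\alpha)\Pi^{-1} = \Pi^{-1}\alpha$, and the full negative range follows by iterating.

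Once the twisted commutation is available for all integer powers, the lemma becomes a direct two-line computation: since the $F$-coefficient can be pulled through the powers of $\Pi$ only at the cost of applying $\theta^{j}$ or $\theta^{k}$, I would compute
\[
(\alpha\Pi^{j})(\beta\Pi^{k}) = \alpha\bigl(\Pi^{j}\beta\bigr)\Pi^{k} = \alpha\theta^{j}(\beta)\Pi^{j+k},
\]
and symmetrically $(\beta\Pi^{k})(\alpha\Pi^{j}) = \beta\theta^{k}(\alpha)\Pi^{j+k}$. Subtracting these two expressions and factoring out $\Pi^{j+k}$ on the right yields exactly the claimed formula.

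There is no real obstacle here; the only subtlety worth flagging is the need to justify the extension of the commutation relation to negative exponents, which requires invoking that $D$ is a division algebra so that $\Pi^{-1}$ exists. Everything else is formal manipulation inside the decomposition $D=\bigoplus_{0\le j<n}F\Pi^{j}$ (with indices read modulo $n$ after multiplying by $\pi$ as needed), and the resulting identity is precisely what expresses the $\bb{Z}/n\bb{Z}$-grading of $D$ as a $K$-Lie algebra, as the authors point out right before the statement.
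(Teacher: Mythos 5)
Your argument is correct and is exactly the computation the paper has in mind: the lemma is stated there as an immediate consequence of the relation $\Pi\alpha=\theta(\alpha)\Pi$, and your verification via $\Pi^{j}\alpha=\theta^{j}(\alpha)\Pi^{j}$ followed by subtracting the two products is the intended (and only natural) route. Your extra care with negative exponents, using invertibility of $\Pi$ in the division algebra, is a fine touch but not a departure from the paper's approach.
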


Let $sl_1(D)$ be the kernel of the reduced-trace map $D\rar K$,
so that $sl_1(\Delta)=\Delta\cap sl_1(D)$.
We have 
$$
sl_1(D) = 
F^0\oplus \left( 
\bigoplus_{1\les j <n} F\Pi^j
\right)
\qquad\mbox{and}\qquad 
sl_1(\Delta) = 
\cl{O}_F^0\oplus \left( 
\bigoplus_{1\les j <n} \cl{O}_F\Pi^j
\right).
$$

\begin{remark}
\label{rsimple}
Assume that $n\ges 2$.
Since $sl_1(D)$ is simple both as a $K$-Lie algebra and as a $\bb{Q}_p$-Lie algebra,
$sl_1(\Delta)$ is just infinite both
as an $\cl{O}_K$-Lie lattice and as a $\bb{Z}_p$-Lie lattice; see Lemma \ref{lsjipid2}.
\end{remark}

\bigskip 

We point out some properties that will be used in 
Section \ref{sn2} and Section \ref{sO_K}. 


\begin{remark}
\label{rdecO}
There exists $\tau_0\in\cl{O}_F$ such that $v_K(T_{F/K}(\tau_0))=0$,
otherwise the trace map of the extension $\kappa_F/\kappa_K$ would be identically zero.
We have $\cl{O}_F=\cl{O}_K\tau_0\oplus \cl{O}_F^0$.
In particular, there exists a  
basis $(\tau_i)_{0\les i <n}$
of $\cl{O}_F$ over $\cl{O}_K$ such that
$(\tau_i)_{1\les i <n}$ is a basis of $\cl{O}_F^0$ over $\cl{O}_K$.
\end{remark}

\begin{lemma}
\label{lpiof}
$\pi(\cl{O}_F^0)=\kappa_F^0$.
\end{lemma}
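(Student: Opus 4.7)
The plan is to prove both inclusions $\mr{pr}(\cl{O}_F^0) \subseteq \kappa_F^0$ and $\mr{pr}(\cl{O}_F^0) \supseteq \kappa_F^0$, using the fact that $F/K$ is unramified (so $\pi$ is also a uniformizer of $\cl{O}_F$) together with the compatibility of the trace map with reduction mod $\pi$. (I read the left-hand side of the lemma as $\mr{pr}(\cl{O}_F^0)$; the symbol $\pi$ seems to be a typo for $\mr{pr}$.)

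For the inclusion $\mr{pr}(\cl{O}_F^0)\subseteq \kappa_F^0$, the key observation is that since $F/K$ is unramified of degree $n$, the trace $T_{F/K}$ restricts to a map $\cl{O}_F\to \cl{O}_K$ which, upon reduction modulo $\pi$, coincides with the trace $T_{\kappa_F/\kappa_K}:\kappa_F\to\kappa_K$. Hence, for any $\alpha\in\cl{O}_F^0$, one has $T_{\kappa_F/\kappa_K}(\overline{\alpha}) = \overline{T_{F/K}(\alpha)} = 0$, so $\overline{\alpha}\in \kappa_F^0$.

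For the reverse inclusion, I would use the element $\tau_0\in\cl{O}_F$ from Remark \ref{rdecO}, which satisfies $T_{F/K}(\tau_0)\in\cl{O}_K^*$. Given $\overline{\beta}\in\kappa_F^0$, lift it to some $\beta\in\cl{O}_F$. By the previous paragraph, $T_{F/K}(\beta)$ reduces to $T_{\kappa_F/\kappa_K}(\overline{\beta}) = 0$, so $T_{F/K}(\beta)\in\pi\cl{O}_K$. Then the scalar $c := T_{F/K}(\beta)\cdot T_{F/K}(\tau_0)^{-1}$ lies in $\pi\cl{O}_K$, and the element $\alpha := \beta - c\tau_0\in\cl{O}_F$ satisfies $T_{F/K}(\alpha)=0$, hence $\alpha\in\cl{O}_F^0$, while $\overline{\alpha}=\overline{\beta}-\overline{c}\,\overline{\tau_0}=\overline{\beta}$.

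There is no real obstacle here: the argument is routine once one has the unramified decomposition of Remark \ref{rdecO} and the compatibility of trace with reduction. The only mild point worth being careful about is making sure to use that $\pi$ is a uniformizer of $\cl{O}_F$ (which is what the unramifiedness of $F/K$ provides), so that $\kappa_F=\cl{O}_F/\pi\cl{O}_F$ really is the residue field and the trace on $\cl{O}_F$ descends to the trace on $\kappa_F$.
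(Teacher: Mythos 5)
Your proposal is correct and follows essentially the same route as the paper: both use the element $\tau_0$ from Remark \ref{rdecO} with $T_{F/K}(\tau_0)$ a unit, together with the compatibility of the trace with reduction modulo $\pi$, to correct a lift of an element of $\kappa_F^0$ by a multiple of $\tau_0$. Your scalar $c = T_{F/K}(\beta)T_{F/K}(\tau_0)^{-1}$ is exactly the coefficient $a_0$ that the paper extracts from expanding the lift in the basis $(\tau_i)$.
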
 

\begin{proof}
The inclusion `$\subseteq$' is clear.
For `$\supseteq$',
let $(\tau_i)_{0\les i <n}$ be a basis of $\cl{O}_F$ over $\cl{O}_K$
such that $(\tau_i)_{1\les i <n}$ is a basis of $\cl{O}_F^0$ over $\cl{O}_K$.
Observe that  $(\overline{\tau}_i)_{0\les i <n}$ is a basis of $\kappa_F$ over $\kappa_K$
and that $(\overline{\tau}_i)_{1\les i <n}$ is a basis of $\kappa_F^0$ over $\kappa_K$.
Also, note that $T_{F/K}(\tau_0)\not\equiv_\pi 0$.
Let $x\in \kappa_F^0$. There exists $\alpha \in \cl{O}_F$ such that $\mr{pr}(\alpha)=x$.
We have $\alpha = \sum_{0\les i<n} a_i\tau_i$ for some $a_i\in\cl{O}_K$.
We have $\pi \,|\,a_0$, since  $a_0T_{F/K}(\tau_0)=T_{F/K}(\alpha)\equiv_\pi 0$. 
Let $\beta = \alpha-a_0\tau_0$. Then $\beta\in \cl{O}_F^0$ and $\mr{pr}(\beta)=x$. 
\end{proof}

\begin{lemma}
\label{cgenogenf}
Let $U$ be an $\cl{O}_K$-submodule of $\cl{O}_F$.
Then the following holds.
\begin{enumerate}
\item 
If $\mr{pr}(U)=\kappa_F$ then $U=\cl{O}_F$.

\item
If $U\subseteq \cl{O}_F^0$ and $\mr{pr}(U)=\kappa_F^0$
then $U=\cl{O}_F^0$.
\end{enumerate}
\end{lemma}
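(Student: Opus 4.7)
The plan is to prove both items by a direct application of Nakayama's lemma to finitely generated $\cl{O}_K$-modules, exploiting the fact that $\cl{O}_K$ is a (commutative) local ring with maximal ideal $\pi\cl{O}_K$.

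For item (1), I would first observe that $\cl{O}_F$ is finitely generated over $\cl{O}_K$ (by Remark \ref{rdecO}, a basis is $(\tau_i)_{0\les i<n}$). Next, the hypothesis $\mr{pr}(U)=\kappa_F$ translates as $\cl{O}_F = U + \pi\cl{O}_F$, since $\ker(\mr{pr})=\pi\cl{O}_F$. Because $\pi\cl{O}_K\cdot \cl{O}_F = \pi\cl{O}_F$, Nakayama's lemma applied to $U\subseteq \cl{O}_F$ then forces $U = \cl{O}_F$.

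For item (2), I would repeat the same strategy but on the module $\cl{O}_F^0$. By Remark \ref{rdecO}, $\cl{O}_F^0$ is free of rank $n-1$ over $\cl{O}_K$, hence finitely generated. Given $x\in\cl{O}_F^0$, Lemma \ref{lpiof} guarantees $\mr{pr}(x)\in \kappa_F^0 = \mr{pr}(U)$, so I can find $u\in U$ with $x-u\in \ker(\mr{pr})\cap \cl{O}_F^0 = \pi\cl{O}_F\cap \cl{O}_F^0$. The one small point that needs verification is the identity $\pi\cl{O}_F\cap \cl{O}_F^0 = \pi\cl{O}_F^0$: if $y=\pi z\in\cl{O}_F^0$ then $0=T_{F/K}(y)=\pi\, T_{F/K}(z)$, so $T_{F/K}(z)=0$ and $z\in \cl{O}_F^0$. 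This gives $\cl{O}_F^0 = U + \pi\cl{O}_F^0$, and Nakayama again yields $U=\cl{O}_F^0$.

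Neither part presents a real obstacle; the only nontrivial bookkeeping is the intersection identity $\pi\cl{O}_F\cap \cl{O}_F^0 = \pi\cl{O}_F^0$ used in item (2), which follows from the $K$-linearity of the trace. Everything else is a textbook application of Nakayama to the finitely generated $\cl{O}_K$-modules $\cl{O}_F$ and $\cl{O}_F^0$ together with the identification $\ker(\mr{pr})=\pi\cl{O}_F$ and Lemma \ref{lpiof}.
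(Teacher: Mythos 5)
Your proposal is correct and follows essentially the same route as the paper, which also deduces both items from Nakayama's Lemma together with Lemma \ref{lpiof}. Your extra verification of $\pi\cl{O}_F\cap \cl{O}_F^0=\pi\cl{O}_F^0$ via $K$-linearity of the trace is exactly the kind of detail the paper leaves implicit, and it is right.
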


\begin{proof}
The statement is a consequence of Nakayama's Lemma 
and Lemma \ref{lpiof}.
\end{proof}

\subsection{The case $n=2$}
\label{sn2}

In the context of Section \ref{divalg}, we 
aim to prove the following theorem.

\begin{theorem}
\label{tn2}
Assume that $n=2$ and $p\neq 2$, and let $m\in\bb{N}$. 
Then $p^m sl_1(\Delta)$, regarded as a $\bb{Z}_p$-Lie lattice,
is not self-similar of index $p$.
\end{theorem}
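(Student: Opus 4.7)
The plan is to show that for $n=2$ and $p\ne 2$, every virtual endomorphism $\varphi:M\to L:=p^m sl_1(\Delta)$ of index $p$ fails to be simple, so $L$ is not self-similar of index $p$. By Lemma~\ref{lsiminj2} (whose hypotheses hold since $L$ is just infinite by Remark~\ref{rsimple}), it suffices to treat injective $\varphi$; so I assume $\varphi$ injective, and let $\tilde\varphi:V\to V$ denote its $\bb{Q}_p$-linear extension to $V:=L\otimes_{\bb{Z}_p}\bb{Q}_p=sl_1(D)$. Being nonzero into a simple $\bb{Q}_p$-Lie algebra (Remark~\ref{rsimple}), $\tilde\varphi$ is a Lie automorphism of $V$. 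The strategy is to prove $\tilde\varphi(L)=L$: then $[L:M]=p$ forces $pL\subseteq M$, and $\varphi(pL)=\tilde\varphi(pL)=pL$, so the nonzero ideal $pL$ of $L$ is $\varphi$-invariant and $\varphi$ is not simple.

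To prove $\tilde\varphi(L)=L$, I would exploit that $n=2$: as a $K$-Lie algebra $V$ is of type $A_1$, so $\mr{Aut}_{K\text{-Lie}}(V)=PGL_1(D)(K)=D^*/K^*$ with every element of the form $\mr{Ad}(g)|_V$ for some $g\in D^*$. In general, $\tilde\varphi$ acts on the centroid $K$ of $V$ by some $\sigma\in\mr{Aut}(K/\bb{Q}_p)$, i.e.\ is $\sigma$-semilinear. Because the local invariant $\mr{inv}:\mr{Br}(K)\to\bb{Q}/\bb{Z}$ is Galois-equivariant, $[D^\sigma]=[D]$ in $\mr{Br}(K)$, so there is a $\sigma$-semilinear $\bb{Q}_p$-algebra automorphism $\tilde\sigma$ of $D$. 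Then $\tilde\varphi\circ(\tilde\sigma|_V)^{-1}$ is $K$-linear, hence equals $\mr{Ad}(g)|_V$ for some $g\in D^*$, so $\tilde\varphi=\Phi|_V$ for the $\bb{Q}_p$-algebra automorphism $\Phi:=\mr{Ad}(g)\circ\tilde\sigma$ of $D$. Since $\Phi$ restricts on $Z(D)=K$ to an automorphism fixing $\bb{Q}_p$, it preserves $\cl{O}_K$ (the integral closure of $\bb{Z}_p$) and, by uniqueness of the maximal $\cl{O}_K$-order, the order $\Delta$. As $\Phi$ also preserves the reduced trace, $\Phi(sl_1(\Delta))=sl_1(\Delta)$, yielding $\tilde\varphi(L)=L$.

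The main obstacle will be the identification of $\mr{Aut}_{\bb{Q}_p\text{-Lie}}(V)$ with the restrictions to $V$ of $\mr{Aut}_{\bb{Q}_p\text{-alg}}(D)$, which relies on the triviality of the outer Lie-automorphism group for type $A_1$. This is exactly why the argument is special to $n=2$: for $n\ges 3$ the $A_{n-1}$ diagram symmetry produces an outer Lie automorphism of $V$ that does not come from an algebra automorphism of $D$ (one would need $D\cong D^{\mr{op}}$, which fails for $[D]$ of order $\ges 3$ in $\mr{Br}(K)$), and the case $n\ges 3$ requires the separate treatment of Section~\ref{sO_K}.
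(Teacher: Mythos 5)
Your proof is correct, but it takes a genuinely different route from the paper's. The paper argues combinatorially: Lemma \ref{ln2_1} produces a standard basis of $sl_1(\Delta)$ over $\cl{O}_K$, Lemmas \ref{ln2sinvM2} and \ref{ln2sinvN} compute the $s$-invariants of every maximal proper $\cl{O}_K$- or $\bb{Z}_p$-submodule, and Lemma \ref{ln2_2} combines this with index stability to conclude that an injective virtual endomorphism of index $p$ matches a maximal proper submodule with one of the same type, so that either $[p^mL,p^mL]$ or $p^m(p^mL)$ is a nonzero $\varphi$-invariant ideal. You instead show that the $\bb{Q}_p$-linear extension of any injective virtual endomorphism is a Lie automorphism of $sl_1(D)$, and that every such automorphism equals $\mr{Ad}(g)\circ\tilde\sigma$ with $g\in D^*$ and $\tilde\sigma$ a semilinear algebra automorphism of $D$ (type $A_1$ has trivial outer automorphism group, $PGL_1(D)(K)\simeq D^*/K^*$ by Hilbert 90, and ${}^\sigma D\simeq D$ because the quaternion division algebra over a local field is unique), hence preserves $\cl{O}_K$, the unique maximal order $\Delta$, $sl_1(\Delta)$, and finally $L$, making $pL$ a $\varphi$-invariant ideal; these inputs are standard, so the sketch is sound. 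The trade-off: the paper's computation is elementary, self-contained, and applies to any $3$-dimensional $\cl{O}_K$-Lie lattice admitting a standard basis, and its $s$-invariant machinery is shared with the rest of Section \ref{slie}; your argument needs heavier background (automorphisms of forms of $sl_2$, Skolem--Noether, Brauer-group invariance, uniqueness of maximal orders), but in exchange it is shorter and proves more, since nothing in it uses $[L:M]=p$: it rules out simple virtual endomorphisms of every index $p^k$ with $k\ges 1$. One inaccuracy in your closing aside: for a division algebra of index $n\ges 3$ over a local field one has $D\not\simeq D^{\mr{op}}$, so the outer component of the automorphism group of $sl_1(D)$ has no $K$-rational points and all $K$-linear Lie automorphisms are again inner; your method is therefore not obstructed at $n\ges 3$ by diagram symmetries --- the paper treats $n\ges 3$ by the commutator computations of Section \ref{sO_K} because it needs the stronger Theorem \ref{tMMLLZ_p} (used for Theorem \ref{tmainalt}(\ref{tmainalt2})), which your argument does not yield.
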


The theorem is a consequence of Lemma \ref{ln2_1} and Lemma \ref{ln2_2} proven below.
For the expository convenience of this section, we introduce the terminology 
``standard basis" in the following definition, which the
reader should compare with \cite[Lemma 2.20]{NS2019}. 
We point out that, in general, standard bases do not exist, even for 3-dimensional unsolvable Lie lattices.

\begin{definition}
Let $L$ be a 3-dimensional $\cl{O}_K$-Lie lattice.
A standard basis of $L$ is a basis $(e_0, e_1, e_2)$ of $L$ over $\cl{O}_K$
with the property that $[e_i,e_{i+1}]= u_{i+2}\pi^{s_{i+2}}e_{i+2}$ for all $i\in\bb{Z}/3\bb{Z}$,
where $u_i\in \cl{O}_K^*$ for all $i$,
$s_0=1$, $s_1=s_2=0$, and $-u_1u_2$ is not a square modulo $\pi$ in $\cl{O}_K$, that is,
there is no $\alpha\in\cl{O}_K$ such that $-u_1u_2\equiv_\pi \alpha^2$.
\end{definition}

\begin{lemma}
\label{ln2_1}
Assume that $n=2$ and $p\neq 2$.
Then there exists a standard basis of $sl_1(\Delta)$.
\end{lemma}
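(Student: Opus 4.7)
The plan is to construct an explicit $\omega\in\cl{O}_F^0$ that is a unit of $\cl{O}_F$ with $\omega^2\in\cl{O}_K^*$ a non-square modulo $\pi$, and then take the basis $(e_0,e_1,e_2):=(\omega,\Pi,\omega\Pi)$. The main work splits into two parts: producing $\omega$ with the required properties, and verifying the structure constants via Lemma \ref{lcomm3}.

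First I would produce $\omega$. Since $F/K$ is unramified of degree $2$, pick any $\beta\in\cl{O}_F$ whose reduction generates the (nontrivial) residue field extension $\kappa_F/\kappa_K$; then set $\omega:=\beta-\theta(\beta)$. Clearly $\omega\in\cl{O}_F^0$. Since $\overline{\theta(\beta)}$ is the image of $\overline{\beta}$ under the nontrivial element of $\mr{Gal}(\kappa_F/\kappa_K)$, we have $\overline{\omega}\neq 0$, so $\omega\in\cl{O}_F^*$; and $\theta(\omega^2)=\omega^2$, so $a:=\omega^2\in\cl{O}_K^*$. If $a\equiv\alpha^2\pmod{\pi}$ for some $\alpha\in\cl{O}_K$ then $\overline{\omega}=\pm\overline{\alpha}\in\kappa_K$, but the nontrivial Galois action sends $\overline{\omega}$ to $-\overline{\omega}$, forcing $2\overline{\omega}=0$ and, since $p\neq 2$, the contradiction $\overline{\omega}=0$. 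Hence $a$ is a non-square modulo $\pi$.

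Next I would check that $(\omega,\Pi,\omega\Pi)$ is an $\cl{O}_K$-basis of $sl_1(\Delta)$. Since $\overline{\omega}\notin\kappa_K$, the pair $(1,\omega)$ is an $\cl{O}_K$-basis of $\cl{O}_F$ by Nakayama; the computation $T_{F/K}(\alpha+\beta\omega)=2\alpha$ (valid because $p\neq 2$) shows $\cl{O}_F^0=\cl{O}_K\omega$, and the decomposition $sl_1(\Delta)=\cl{O}_F^0\oplus\cl{O}_F\Pi$ then gives the claim.

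Finally, using Lemma \ref{lcomm3} and the relations $\theta(\omega)=-\omega$, $\Pi^2=\pi$, I would compute
\begin{align*}
[e_0,e_1] &= (\omega-\theta(\omega))\Pi = 2\,e_2,\\
[e_1,e_2] &= (\theta(\omega)-\omega)\Pi^2 = -2\pi\,e_0,\\
[e_2,e_0] &= (\omega\theta(\omega)-\omega^2)\Pi = -2a\,e_1,
\end{align*}
which yields $u_2=2$, $u_0=-2$, $u_1=-2a$, all units since $p\neq 2$, and $s_2=s_1=0$, $s_0=1$. Then $-u_1u_2=4a$, which is a non-square modulo $\pi$ precisely because $a$ is (as $4=2^2$ and $p\neq 2$), confirming the last axiom of a standard basis. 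The only delicate step is the non-square property of $\omega^2$; everything else is a bookkeeping verification.
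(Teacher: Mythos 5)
Your proof is correct and follows essentially the same route as the paper: both take the basis $(\xi,\Pi,\xi\Pi)$ for a trace-zero unit $\xi\in\cl{O}_F$ with $\theta(\xi)=-\xi$, compute the same structure constants via Lemma \ref{lcomm3}, and reduce the non-square condition to the fact that $\overline{\xi}\notin\kappa_K$. The only (cosmetic) difference is that you build the element as $\beta-\theta(\beta)$ for a residue generator $\beta$, while the paper obtains it from the decomposition $\cl{O}_F=\cl{O}_K\oplus\cl{O}_F^0$ (Remark \ref{rdecO}), using that $2\in\cl{O}_K^*$.
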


\begin{proof}
Since $p$ is odd, $2\in \cl{O}_K^*$.
Hence, since $T_{F/K}(1)=2$, there exists $\xi\in\cl{O}_F$ such
that $(1,\xi)$ is a basis of  $\cl{O}_F$ over $\cl{O}_K$
and $T_{F/K}(\xi)= 0$ (see Remark \ref{rdecO}).
By observing that $\theta(\xi)= -\xi$,
so that $N_{F/K}(\xi)=-\xi^2$, we see that $\xi^2\in \cl{O}_K$; moreover, since $\xi\in\cl{O}_F^*$,
we have $\xi^2\in \cl{O}_K^*$. 
By defining $e_0 = \xi$, $e_1=\Pi$, and $e_2= \xi\Pi$,
we get a presentation of $sl_1(\Delta)$ of the form $sl_1(\Delta)=\cl{O}_Ke_0\oplus \cl{O}_Ke_1\oplus \cl{O}_Ke_2$
with $[e_i, e_{i+1}]= u_{i+2}\pi^{s_{i+2}}e_{i+2}$ for $i\in\bb{Z}/3\bb{Z}$, 
where 
$s_0 =1$, $s_1=s_2=0$, $u_0 = -2\pi$, $u_1 = -2\xi^2$, and $u_2= 2$.
Since $-u_1u_2=4\xi^2$, 
it remains to show that $\xi^2$ is not a square modulo $\pi$ in $\cl{O}_K$, which follows
from the fact that the image of $\xi$ in $\kappa_F$ is not in $\kappa_K$.
\end{proof}

\bigskip

For the proof of Lemma \ref{ln2_2} below, we need two lemmas,
for which we introduce the following context.
Let $L$ be a 3-dimensional $\cl{O}_K$-Lie lattice that
admits a standard basis $(e_0, e_1, e_2)$.
We define 
$M_0 = [L,L]=\mr{Span}_{\cl{O}_K}(\pi e_0,e_1,e_2)$,
which is a maximal proper $\cl{O}_K$-submodule of $L$,
and advise the reader to recall the definition of $s$-invariants
from Section \ref{sdvr}.
The next lemma is a version of \cite[Lemma 2.25, Lemma 2.26]{NS2019} 
where the ring of coefficients is $\cl{O}_K$ in place of $\bb{Z}_p$,
and where $M$ is not assumed to be a subalgebra.  
We leave to the reader to check that the new lemma 
has the same proof as its original counterparts.

\begin{lemma}
\label{ln2sinvM2}
Let $M$ be a maximal proper $\cl{O}_K$-submodule of $L$.
Then the following holds.
\begin{enumerate}
\item 
If $M=M_0$ then $s_{\cl{O}_K}(M)= (0, 1^2)$ and $M=[L,L]$. 

\item 
If $M\neq M_0$ then $s_{\cl{O}_K}(M)= (-1,1,2)$ and $M+[M,M]=L$.
\end{enumerate}
\end{lemma}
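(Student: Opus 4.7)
The plan is to parametrize every maximal proper $\cl{O}_K$-submodule of $L$ using the notation $N_{\mu,b}$ from Section~\ref{sdvr}, where $\mu\in\{0,1,2\}$ indexes the ``distinguished'' basis element and $b=(b_\lambda)_{\lambda\neq\mu}$ is a tuple in $\cl{O}_K$. By Lemma~\ref{ldvr1}, the submodule $M_0=\mr{Span}_{\cl{O}_K}(\pi e_0,e_1,e_2)$ corresponds precisely to $\mu=0$ with $b_1\equiv_\pi b_2\equiv_\pi 0$; every other pair $(\mu,b)$ gives $M\neq M_0$.

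For the first item, I would compute $[M_0,M_0]$ directly. From the standard-basis relations $[e_0,e_1]=u_2 e_2$, $[e_1,e_2]=u_0\pi e_0$, $[e_2,e_0]=u_1 e_1$, the three pairwise brackets of the generators $\pi e_0,e_1,e_2$ are $u_2\pi e_2$, $-u_1\pi e_1$, and $u_0\pi e_0$. Since each $u_i$ is a unit in $\cl{O}_K$, we obtain $[M_0,M_0]=\pi L$. Written against the basis $(\pi e_0,e_1,e_2)$ of $M_0$, this immediately yields the invariant exponents $(0,1,1)$, that is, $s_{\cl{O}_K}(M_0)=(0,1^2)$. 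The equality $M_0=[L,L]$ holds by the very definition of $M_0$.

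For the second item, I would treat the three subcases $\mu=0$ (with $b_1$ or $b_2$ a unit modulo $\pi$), $\mu=1$, and $\mu=2$ in parallel, exploiting the cyclic role of the indices. In each subcase I expand the three pairwise brackets of the generators $y_\mu,y_{\lambda_1},y_{\lambda_2}$ of $M$ in the $(e_0,e_1,e_2)$-basis using the standard relations. The decisive bracket is $[y_{\lambda_1},y_{\lambda_2}]$: after clearing the $e_\mu$-contributions one obtains an element of the form $\alpha e_0+\beta e_1+\gamma e_2$ in which at least one of $\beta,\gamma$ is a unit; this element belongs to $[M,M]$ but not to $M$, producing the entry $-1$ in $s_{\cl{O}_K}(M)$. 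The remaining two brackets are of the shape $\pi u\, e_j$ and $\pi^2 u'\, e_j$ with $u,u'\in\cl{O}_K^*$, and account for the entries $1$ and $2$. Assembling these three vectors as the rows of a $3\times 3$ change-of-basis matrix from $M$ to $[M,M]$ and putting it in Smith normal form over $\cl{O}_K$ gives $s_{\cl{O}_K}(M)=(-1,1,2)$.

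The equality $M+[M,M]=L$ then follows from the same data: the element of $[M,M]$ that lies outside $M$ supplies the one missing direction and, together with the three generators of $M$, spans $L$. The main obstacle is the bookkeeping in the case analysis, and in particular making sure the hypothesis that $-u_1u_2$ is not a square modulo $\pi$ is invoked exactly where needed to prevent $[y_{\lambda_1},y_{\lambda_2}]$ from degenerating modulo $\pi$ (which would be necessary for $M$ to equal $M_0$ or to make the $-1$ exponent collapse). As the authors note, once the $N_{\mu,b}$-parametrization is set up the argument is a routine translation of \cite[Lemma 2.25]{NS2019} and \cite[Lemma 2.26]{NS2019}, with $\cl{O}_K$ playing the role of $\bb{Z}_p$.
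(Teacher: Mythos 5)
Your proposal is correct and takes essentially the same route as the paper, which simply notes that this lemma has the same proof as its counterparts \cite[Lemmas 2.25, 2.26]{NS2019}: parametrize the maximal proper $\cl{O}_K$-submodules, compute the pairwise brackets of their generators, extract the relative invariant exponents via the Smith form, and invoke the non-square condition on $-u_1u_2$ precisely to keep the decisive bracket outside $M$. One cosmetic slip: in the subcase $\mu=0$ the two remaining brackets are $\pi u_2e_2$ and $-\pi u_1e_1$ (there is no bracket of the shape $\pi^2u'e_j$), so the entries $1$ and $2$ come out only of the Smith normal form of the full $3\times 3$ matrix rather than from individual brackets --- but that is exactly the computation you propose, and it does yield $(-1,1,2)$.
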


\begin{lemma}
\label{ln2sinvN}
Let $N$ be a maximal proper $\bb{Z}_p$-submodule of $L$.
Then the following holds.
\begin{enumerate}
\item 
If $N$ is not an $\cl{O}_K$-submodule of $L$ then:
 \begin{enumerate}
 \item 
 if $[N,N]\subseteq N$ then $s_{\bb{Z}_p}(N)=(0^{3d-f+1}, 1^{f-1})$;
 \item 
 if $[N,N]\not\subseteq N$ then $s_{\bb{Z}_p}(N)=(-1, 0^{3d-f-1}, 1^{f})$.
 \end{enumerate} 

\item 
If $N=M_0$ 
then $s_{\bb{Z}_p}(N)=(0^{3d-2f},1^{2f})=(0^{3d-2},1^{2})$.

\item 
If $N$ is an $\cl{O}_K$-submodule of $L$ and $N\neq M_0$ 
then: 
 \begin{enumerate}
 \item 
 if $d=1$ then $s_{\bb{Z}_p}(N)= (-1, 1, 2)$;
 \item 
 if $d\ges 2$ then $s_{\bb{Z}_p}(N)=(-1^{f}, 0^{3d-4f}, 1^{3f})= (-1, 0^{3d-4}, 1^{3})$.
 \end{enumerate} 
\end{enumerate}
\end{lemma}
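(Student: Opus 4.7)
The plan is to partition the analysis according to the three scenarios of the statement, computing $s_{\bb{Z}_p}(N)=s_{\bb{Z}_p}(N,[N,N])$ in each. The main tools are the transfer Corollary~\ref{cdvr3} from $\cl{O}_K$-invariants to $\bb{Z}_p$-invariants, Lemma~\ref{lnotO_KNNLL} identifying $[N,N]$ with $[L,L]=M_0$ when $N$ is not an $\cl{O}_K$-submodule, the bracket relations of the standard basis, and Lemma~\ref{ln2sinvM2} for $\cl{O}_K$-submodules. I expect the main obstacle to be the computation in case (1), where Lemma~\ref{ln2sinvM2} does not apply and one must extract the Smith exponents directly from quotient-index information over $\bb{Z}_p$.

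For case (1), Lemma~\ref{lnotO_KNNLL} gives $[N,N]=M_0$, reducing the task to computing $s_{\bb{Z}_p}(N,M_0)$. Writing $p=u\pi^e$ with $u\in\cl{O}_K^*$, one has $pL\subseteq \pi L\subseteq M_0$; combined with $pL\subseteq N$, which follows from $[L:N]=p$, this yields $pN\subseteq M_0$ and $pM_0\subseteq N$, so every exponent in $s_{\bb{Z}_p}(N,M_0)$ lies in $\{-1,0,1\}$. Now split on whether $M_0\subseteq N$, i.e. whether $[N,N]\subseteq N$. If $M_0\subseteq N$, all exponents are nonnegative and their sum equals $v_p([N:M_0])=f-1$, giving the multiset $(0^{3d-f+1},1^{f-1})$. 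If $M_0\not\subseteq N$, the maximality of $N$ forces $N+M_0=L$; the second isomorphism theorem then yields $[N:N\cap M_0]=p^f$ and $[M_0:N\cap M_0]=p$, and identifying $N/(N\cap M_0)$ (resp.\ $M_0/(N\cap M_0)$) with the sum of cyclic summands corresponding to $s_l=1$ (resp.\ $s_l=-1$) shows that the multiplicities are $f$ and $1$, producing $(-1,0^{3d-f-1},1^f)$.

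For case (2), the identity $N=M_0$ together with $[L:N]=p$ and $[L:M_0]=p^f$ forces $f=1$. A direct computation with the standard-basis bracket relations shows that $[M_0,M_0]$ is $\cl{O}_K$-spanned by $\pi u_0 e_0$, $\pi u_1 e_1$, and $\pi u_2 e_2$, so $[N,N]=\pi L$. Reading $\pi L$ against the $\cl{O}_K$-basis $(\pi e_0,e_1,e_2)$ of $M_0$ yields $s_{\cl{O}_K}(N)=(0,1,1)$, and Corollary~\ref{cdvr3} with $f=1$ delivers $s_{\bb{Z}_p}(N)=(0^{3d-2},1^{2})$.

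For case (3), Remark~\ref{rf2noO_K} forces $f=1$ and turns $N$ into a maximal proper $\cl{O}_K$-submodule of $L$; since $N\neq M_0$, Lemma~\ref{ln2sinvM2} provides $s_{\cl{O}_K}(N)=(-1,1,2)$. Applying Corollary~\ref{cdvr3} with $f=1$ and $e=d$, the case $d=1$ is immediate, while for $d\geq 2$ the exponents $-1$, $1$, and $2$ contribute $(-1,0^{d-1})$, $(0^{d-1},1)$, and $(0^{d-2},1^2)$, respectively, concatenating to $(-1,0^{3d-4},1^3)$.
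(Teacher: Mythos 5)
Your proposal is correct, and for the key case it takes a genuinely different route from the paper. For cases (2) and (3) you do essentially what the paper does: observe $f=1$ (Remark \ref{rf2noO_K}), take the $\cl{O}_K$-invariants from Lemma \ref{ln2sinvM2} (in case (2) you recompute $[M_0,M_0]=\pi L$ directly, which is the same thing), and push them through Corollary \ref{cdvr3}; the only cosmetic caveat is that for $d=2$ the exponent $2$ decomposes as $q=1$, $r=0$ rather than $q=0$, $r=2$, but the contributed multiset $(1^2)$ is unchanged. In case (1), however, the paper works with the explicit parametrization $N=N_{\mu,b}$ of Section \ref{sedvr}: it splits according to whether one can take $\mu_2\neq 0$, builds explicit bases of $[L,L]$ adapted to $N$, reduces the remaining subcase to Lemma \ref{ldvr2} via the identification $N=B_{\mu,b}e_0\oplus\cl{O}_Ke_1\oplus\cl{O}_Ke_2$, and only at the end uses the sign criterion to match the two multisets with the dichotomy $[N,N]\subseteq N$ or not. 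You replace all of this by a soft elementary-divisor argument: after $[N,N]=[L,L]=M_0$ (Lemma \ref{lnotO_KNNLL}, same as the paper), the containments $pN\subseteq pL\subseteq \pi L\subseteq M_0$ and $pM_0\subseteq pL\subseteq N$ confine the exponents to $\{-1,0,1\}$, and the indices $[L:N]=p$, $[L:M_0]=p^f$ pin down the multiplicities — via the exponent sum $f-1$ when $M_0\subseteq N$, and via $[N:N\cap M_0]=p^f$, $[M_0:N\cap M_0]=p$ (second isomorphism theorem, with the cyclic-summand identification you describe) when $M_0\not\subseteq N$. This is shorter and more conceptual than the paper's Smith-form computation, and it sidesteps Lemma \ref{ldvr2} and the $N_{\mu,b}$ bookkeeping entirely; what the paper's explicit route buys in exchange is a concrete adapted basis and the template reused elsewhere in Section \ref{sedvr}, but as a proof of this lemma your argument is complete.
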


\begin{proof2}
When $N$ is an  $\cl{O}_K$-submodule of $L$, the result follows from a straightforward computation
by applying Lemma \ref{ln2sinvM2} and Corollary \ref{cdvr3};
one has also to observe that we have $f=1$ in this case (see Remark \ref{rf2noO_K}).
Assume that $N$ is not an $\cl{O}_K$-submodule of $L$.
We will show that the $s$-invariants $s_{\bb{Z}_p}(N)=s_{\bb{Z}_p}(N, [N, N])$ are
either $(0^{3d-f+1}, 1^{f-1})$ or $(-1, 0^{3d-f-1}, 1^{f})$.
Since the $s$-invariants are all nonnegative if and only if $[N,N]\subseteq N$, the result follows.
Let $(e_0,e_1,e_2)$ be a standard basis of $L$, and  recall that 
$[L,L]=\cl{O}_K\pi e_0\oplus \cl{O}_Ke_1\oplus \cl{O}_Ke_2$.
By Lemma \ref{lnotO_KNNLL}, $[N,N]=[L,L]$.
We use the notation
of Section \ref{sedvr}; in particular, 
from the paragraph below Remark \ref{rf2noO_K},
the reader should recall the domains where the indices 
$\lambda=(\lambda_0,\lambda_1,\lambda_2)$ and the pairs $(\mu,b)$ take values,
and 
the definition of $x_\lambda$, $y_\lambda$, $N_{\mu,b}$, and $B_{\mu,b}$.
We divide the proof into two cases.

\begin{enumerate}
\item 
Assume that there exists $(\mu,b)$ such that $N=N_{\mu,b}$ and $\mu_2\neq 0$.
We define 
$$
z_\lambda =
\left\{
\begin{array}{rl}
x_\lambda 
&\quad \mbox{if } \lambda_2\neq 0
\\ 

x_\lambda 
&\quad \mbox{if } \lambda_2 = 0\mbox{ and }\lambda_1\neq 0
\\ 

px_\lambda 
&\quad \mbox{if } \lambda_2 = 0\mbox{ and }\lambda_1 = 0,
\end{array}
\right.$$
and from Remark \ref{rdvr3} one can see that $(z_\lambda)$ is a basis of $[L,L]$ over $\bb{Z}_p$.
We define a new basis $(w_\lambda)$ of $[L,L]$ over $\bb{Z}_p$ by

$$
w_\lambda =
\left\{
\begin{array}{rl}
z_\mu 
&\quad \mbox{if } \lambda = \mu
\\ 

z_\lambda+b_\lambda z_\mu 
&\quad \mbox{if } \lambda \neq \mu \mbox{ and } \lambda_2\neq 0
\\ 

z_\lambda+b_\lambda z_\mu 
&\quad \mbox{if } \lambda \neq \mu \mbox{ and } \lambda_2= 0\mbox{ and } \lambda_1\neq 0
\\ 

z_\lambda+pb_\lambda z_\mu 
&\quad \mbox{if } \lambda \neq \mu \mbox{ and } \lambda_2= 0\mbox{ and } \lambda_1 = 0.
\end{array}
\right.$$
One can compute that
$$
w_\lambda =
\left\{
\begin{array}{rl}
p^{-1}y_\mu
&\quad \mbox{if } \lambda = \mu
\\ 

y_\lambda
&\quad \mbox{if } \lambda \neq \mu \mbox{ and } \lambda_2\neq 0
\\ 

y_\lambda
&\quad \mbox{if } \lambda \neq \mu \mbox{ and } \lambda_2= 0\mbox{ and } \lambda_1\neq 0
\\ 

py_\lambda
&\quad \mbox{if } \lambda \neq \mu \mbox{ and } \lambda_2= 0\mbox{ and } \lambda_1 = 0,
\end{array}
\right.$$
from which 
the $s$-invariants are  computed to be $(-1, 0^{3d-f-1}, 1^{f})$.

\item 
Assume that for all $(\mu,b)$ with $N=N_{\mu,b}$ we have $\mu_2= 0$.
We take a pair $(\mu, b)$ with $N=N_{\mu,b}$.
For all $\lambda$ with $\lambda_2\neq 0$, we have $b_\lambda\equiv_p 0$,
otherwise, we could apply Lemma \ref{ldvr1} (with $R=\bb{Z}_p$ and $\varrho = p$) 
to get a pair $(\nu, c)$ such that
$N=N_{\nu, c}$ and $\nu_2\neq 0$.
We claim that $N=Be_0\oplus \cl{O}_Ke_1\oplus \cl{O}_Ke_2$, where $B=B_{\mu, b}$ 
(the claim is proven below). 
It follows that $s_{\bb{Z}_p}(N)=s_{\bb{Z}_p}(N, [L,L]) = S_{\bb{Z}_p}(B,\pi \cl{O}_K) \cup (0^{2d})$.
Lemma \ref{ldvr2} gives the value of $S_{\bb{Z}_p}(B,\pi \cl{O}_K)$, and
the $s$-invariants of $N$ turn out to be
$(0^{3d-f+1}, 1^{f-1})$ or $(-1, 0^{3d-f-1}, 1^{f})$, as desired.
For the proof of the claim, let $z\in Be_0\oplus \cl{O}_Ke_1\oplus \cl{O}_Ke_2$.
Then, for some sequence $(c_\lambda)$ of elements of $\bb{Z}_p$,
we have
\begin{eqnarray*}
z
&=& 
\left(\sum_{\lambda: \lambda_2=0} c_\lambda \beta_{\lambda_0\lambda_1}\right)e_0+
\left(\sum_{\lambda: \lambda_2\neq 0} c_\lambda \varepsilon_{\lambda_0\lambda_1}\right)e_{\lambda_2}
\\
&=&
\sum_{\lambda: \lambda_2=0} c_\lambda y_\lambda +
\sum_{\lambda: \lambda_2\neq 0} c_\lambda (y_\lambda-p^{-1}b_\lambda y_\mu),
\end{eqnarray*}
where $\varepsilon_{\lambda_0\lambda_1}$ and $ \beta_{\lambda_0\lambda_1}$
are defined in Section \ref{sedvr}.
Since $b_\lambda\equiv_p 0$ when $\lambda_2\neq 0$, we see that $z\in N_{\mu,b}=N$.
Hence, $Be_0\oplus \cl{O}_Ke_1\oplus \cl{O}_Ke_2\subseteq N$. 
Since $Be_0\oplus \cl{O}_Ke_1\oplus \cl{O}_Ke_2$ is a maximal proper $\bb{Z}_p$-submodule of $L$ 
and $N$ is proper, we get the equality.\ep 
\end{enumerate}
\end{proof2}

\bigskip 

Now, we are ready to prove the lemma that completes the proof of Theorem \ref{tn2}.

\begin{lemma}
\label{ln2_2}
Let $L$ be a 3-dimensional $\cl{O}_K$-Lie lattice that
admits a standard basis, and let $m\in\bb{N}$. 
Then $p^m L$, regarded as a $\bb{Z}_p$-Lie lattice,
is not self-similar of index $p$.
\end{lemma}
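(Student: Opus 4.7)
The plan is to argue by contradiction. Suppose $\varphi : N \to L'$ is a simple virtual endomorphism of index $p$, where $L' := p^m L$; we aim to exhibit a nonzero $\varphi$-invariant ideal of $L'$.

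First I would verify that $L$ is just infinite as a $\bb{Z}_p$-Lie lattice. The standard basis condition on $L$ (with $-u_1u_2$ not a square modulo $\pi$) identifies $L\otimes_{\cl{O}_K}K$ with $sl_1(D)$ for some quaternion division $K$-algebra $D$; this is simple over $K$ and, because $D$ is central over $K$, also simple over $\bb{Q}_p$. Lemma \ref{lsjipid2} then gives just-infiniteness of $L$ (hence of $L'$), and Lemma \ref{lsiminj2} forces $\varphi$ to be injective. Put $N' := \varphi(N)$ and $N_0 := p^{-m}N$, $N_0' := p^{-m}N'$ (maximal proper submodules of $L$). As $\varphi$ is a Lie lattice isomorphism $N \simeq N'$, we have $s_{\bb{Z}_p}(N) = s_{\bb{Z}_p}(N')$, and since the $s$-invariants of $N$ (resp.\ $N'$) in $L'$ are uniform $m$-shifts of those of $N_0$ (resp.\ $N_0'$) in $L$, Lemma \ref{ln2sinvN} forces $N_0$ and $N_0'$ to share the same type.

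The argument splits by the inertia degree $f$ of $K/\bb{Q}_p$. If $f \ges 2$, Remark \ref{rf2noO_K} and Lemma \ref{lnotO_KNNLL} give $[M,M] = [L,L]$ for every maximal proper $\bb{Z}_p$-submodule $M$ of $L$, so Lemma \ref{lMMLLnss2} applies. If $f = 1$ and $m = 0$, the classification of Lemma \ref{ln2sinvN} forces $M_0 = [L,L]$ to be the unique maximal proper $\bb{Z}_p$-subalgebra of $L$ (case (1)(a) is vacuous, while cases (1)(b), (3) describe submodules not closed under bracket), so $N = N' = M_0$ and $M_0$ is a $\varphi$-invariant ideal.

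For the remaining case $f = 1$, $m \ges 1$: the key observation is that $[L',L'] = p^{2m}M_0 \subseteq p^{m+1}L = pL' \subseteq N$, so every maximal proper $\bb{Z}_p$-submodule of $L'$ is both a subalgebra and an ideal of $L'$. One then splits on the Lemma \ref{ln2sinvN} type of $N_0$. For type (1)(b), Lemma \ref{lnotO_KNNLL} yields $[N,N] = [L',L'] = [N',N']$, so $[L',L']$ is $\varphi$-invariant. For type (2), $N_0 = M_0$ is unique, so $N = N' = p^m M_0$, an ideal of $L'$, hence $\varphi$-invariant. For type (3), the commutator $[N,N]$ varies with $N_0$, and the invariant ideal must be identified using additional structure — most plausibly via the $\bb{Q}_p$-linear extension $\hat\varphi$ of $\varphi$ to $L'\otimes_{\bb{Z}_p}\bb{Q}_p$, a Lie automorphism of this simple $\bb{Q}_p$-algebra, whose action on the system of characteristic $L'$-sublattices cuts out the required invariant ideal. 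This type (3) case is the main obstacle.
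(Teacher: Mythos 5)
Your proposal correctly establishes just-infiniteness, reduces to $\varphi$ injective via Lemma \ref{lsiminj2}, and uses the $s$-invariant classification of Lemma \ref{ln2sinvN} to force $p^{-m}N$ and $p^{-m}N'$ into the same type. The $f\ges 2$ shortcut through Lemma \ref{lMMLLnss2} and the $f=1$, $m=0$ reduction to $N=N'=M_0$ are both sound, though they split off cases the paper treats uniformly. The genuine gap is exactly where you flag it: the type-(3) subcase with $f=1$, $m\ges 1$. Gesturing toward the $\bb{Q}_p$-linear extension $\hat\varphi$ does not help: an automorphism of the simple $\bb{Q}_p$-Lie algebra $L\otimes\bb{Q}_p$ carries no information about which full sublattice to single out, and the characteristic sublattices ($p^k L$, $[p^kL,p^kL]$, etc.) are not automatically preserved by $\hat\varphi$.

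The missing observation, which the paper uses to close all cases at once, is the second conclusion of Lemma \ref{ln2sinvM2}: for any maximal proper $\cl{O}_K$-submodule $M\neq M_0$ one has $M+[M,M]=L$. Applying this with $M=p^{-m}N$ and $M=p^{-m}N'$ and rescaling by $p^{2m}$ gives
$p^mN+[N,N]=p^{2m}L=p^mN'+[N',N']$.
Since $\varphi$ is a Lie algebra homomorphism with $\varphi(N)=N'$, it maps the left-hand submodule onto the middle one: $\varphi(p^{2m}L)=p^mN'+[N',N']=p^{2m}L$. For $m\ges 1$ one also has $p^{2m}L\subseteq p^{m+1}L\subseteq N$, so the nonzero ideal $p^{2m}L=p^m(p^mL)$ of $p^mL$ is $\varphi$-invariant. (For $m=0$, type (3) cannot occur, as $N$ would not even be a subalgebra.) The paper in fact feeds all three possibilities — not an $\cl{O}_K$-submodule, equal to $M_0$, an $\cl{O}_K$-submodule $\neq M_0$ — through Lemma \ref{lnotO_KNNLL} and Lemma \ref{ln2sinvM2} to produce, in each case, one of the two nonzero $\varphi$-invariant ideals $[p^mL,p^mL]$ or $p^m(p^mL)$; this avoids the $f$- and $m$-casework entirely.
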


\begin{proof}
Let $\varphi:N\rar p^mL$ be a virtual endomorphism of $p^mL$ of index $p$.
We have to show that $\varphi$ is not simple.
We start by proving that  $p^mL$ is just infinite
as $\bb{Z}_p$-Lie lattice.
With the same proof as for \cite[Lemma 9]{NS2022JGT},
one can see that $L$ is just infinite as an $\cl{O}_K$-Lie lattice.
Then, being nonabelian, the $K$-Lie algebra $L\otimes_{\cl{O}_K} K$ is simple
(Lemma \ref{lsjipid2}).
By \cite[I.6.10]{BouLieGrAlg1}, 
$L\otimes_{\cl{O}_K} K$ is simple as a
$\bb{Q}_p$-Lie algebra.
Observe that $L$ is a full $\bb{Z}_p$-lattice 
in the $\bb{Q}_p$-vector space $L\otimes_{\cl{O}_K} K$,
so that  we can identify the $\bb{Q}_p$-Lie algebras
$L\otimes_{\cl{O}_K} K$ and $L\otimes_{\bb{Z}_p}\bb{Q}_p$.
Hence, by a second application of Lemma \ref{lsjipid2}, $L$ is just infinite as a $\bb{Z}_p$-Lie lattice.
Finally, again by Lemma \ref{lsjipid2}, 
$p^mL$ is just infinite as a $\bb{Z}_p$-Lie lattice, as desired.

Now, by Lemma \ref{lsiminj2}, we can assume that $\varphi$ is injective. 
We define $N'=\varphi(N)$, so that $\varphi$ induces an isomorphism
$N\simeq N'$ of $\bb{Z}_p$-Lie algebras, from which we get $s_{\bb{Z}_p}(N)=s_{\bb{Z}_p}(N')$. 
By index stability 
\cite[Corollary 8]{NSindexstable2021}, the index of $N'$ in $p^mL$ is $p$.
It follows that $p^{-m}N$ and $p^{-m}N'$ are maximal proper $\bb{Z}_p$-submodules
of $L$. Since $N$ and $N'$ have the same $s$-invariants, since
the lists of $s$-invariants that appear in Lemma \ref{ln2sinvN} are
different among the (sub)items 
(by looking at the multiplicities of $1$ and $2$, for instance), 
and since the items of the lemma exhaust all the cases,
we see that one of the following possibilities holds:
\begin{itemize}
\item 
none of $p^{-m}N$ and $p^{-m}N'$ is an $\cl{O}_K$-submodule of $L$;
\item 
$p^{-m}N=M_0=p^{-m}N'$;
\item 
both $p^{-m}N$ and $p^{-m}N'$ are $\cl{O}_K$-submodules of $L$ 
different from $M_0$.
\end{itemize}
By Lemma \ref{lnotO_KNNLL} and Lemma \ref{ln2sinvM2}, one of the following possibilities holds:
\begin{itemize}
\item 
$[p^{-m}N, p^{-m}N] = [L,L] = [p^{-m}N', p^{-m}N']$;

\item 
$p^{-m}N= [L,L] = p^{-m}N'$;

\item 
$p^{-m}N + [p^{-m}N, p^{-m}N] = L = p^{-m}N'+ [p^{-m}N', p^{-m}N']$. 
\end{itemize}
Hence, one of the following possibilities holds:
\begin{itemize}
\item 
$[N, N] = [p^{m}L,p^{m}L] = [N', N']$;

\item 
$p^mN=[p^mL,p^mL]=p^mN'$;

\item 
$p^{m}N + [N, N] = p^{m}(p^{m}L) = p^{m}N'+ [N', N']$. 
\end{itemize}
Finally, one of $[p^{m}L,p^{m}L]$ and $\,p^{m}(p^{m}L)$
is a nonzero ideal of $p^mL$ that is $\varphi$-invariant. 
Hence, $\varphi$ is not simple.
\end{proof}

\subsection{The case $n\ges 3$}
\label{sO_K}

In the context of Section \ref{divalg}, we will work over $\cl{O}_K$ and aim to prove the following theorem. 

\begin{theorem}
\label{tMMLLO_K}
Assume that $n\ges 3$ and $(p,n)\neq (3,3)$. 
Let $M$ be a maximal proper $\cl{O}_K$-submodule of $sl_1(\Delta)$.
Then $[M,M]=[sl_1(\Delta),sl_1(\Delta)]$.
\end{theorem}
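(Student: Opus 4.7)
The plan is to exploit the $\bb{Z}/n\bb{Z}$-grading on $L := sl_1(\Delta)$ given by Lemma \ref{lcomm3}, namely $L = L_0 \oplus \bigoplus_{j=1}^{n-1} L_j$ with $L_0 = \cl{O}_F^0$ and $L_j = \cl{O}_F\Pi^j$. I would first determine the graded pieces of $[L,L]$: Lemma \ref{lcomm3} together with Lemma \ref{lcyc1} (additive Hilbert 90 applied to $\kappa_F/\kappa_K$) and a Nakayama-type argument yields $[L,L]_0 = \pi L_0$, while Lemma \ref{lcyc2} combined with Nakayama gives $[L,L]_j = L_j$ for $1\les j\les n-1$; the assumption $n\ges 3$ is used to guarantee that the Lemma \ref{lcyc2} condition $k+1\not\equiv_n 0$ can be met by a valid pair of graded indices.

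Since $[M,M]\subseteq [L,L]$ is automatic, by Nakayama's lemma over $\cl{O}_K$ it suffices to show that the image of $[M,M]$ in the quotient $[L,L]/\pi[L,L]$ is everything. The key observation is that $M$ contains $\pi L$ and hence corresponds to a $\kappa_K$-hyperplane $\bar M$ in $\bar L := L/\pi L$; moreover $[L,L]/\pi[L,L]$ is a $\kappa_K$-vector space canonically isomorphic to $\bar L$ (after shifting the degree-$0$ piece by $\pi$), and the induced bilinear map $\bar L\times \bar L\to [L,L]/\pi[L,L]$ is a truncated graded bracket that agrees with the naive bracket on $\bar L$ in positive degree, vanishes on $\bar L_i\times \bar L_j$ when $i+j>n$, and captures the $\pi$-divided value in degree $0$ when $i+j=n$. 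I would then parametrize $\bar M$ as the kernel of a nonzero $\kappa_K$-functional $\phi = \phi_0 + \phi_1 + \dots + \phi_{n-1}$ using the basis of $\kappa_F/\kappa_K$ from Lemma \ref{lbascyc}, and reformulate the theorem as the statement that the image of $\bar M\times \bar M$ under the truncated bracket exhausts $\bar L$.

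The bulk of the proof is then a case analysis according to which $\phi_j$ vanish, in each case producing explicit brackets of elements of $\bar M$ that cover each graded piece of $\bar L$. The positive-degree pieces are handled using Lemmas \ref{lcyc0} and \ref{lcyc2} together with Remark \ref{rcyc2}, which ensure enough surjectivity inside $\kappa_F$ when starting from a codimension-one subspace. The delicate piece is the degree-$0$ part, which must be covered by brackets $[\bar M\cap \bar L_i, \bar M\cap \bar L_{n-i}]$ under the twisted bracket; I expect this to be the main obstacle and the place where the hypothesis $(p,n)\neq (3,3)$ is essential. When $p=n=3$ the residue characteristic divides $n$, so by Lemma \ref{lbascyc} one cannot normalize $\tau_0=1$, several identities of Section \ref{sliecyc} degenerate, and for certain $M$ the degree-$0$ brackets fail to exhaust $\pi L_0$, which makes the theorem genuinely false in that case.
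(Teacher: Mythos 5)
Your opening reduction is valid and is a genuinely different organization from the paper's. Since $M\supseteq\pi L$, changing an argument of a bracket by an element of $\pi L$ changes the bracket by an element of $\pi[L,L]$, so the truncated graded bracket on $\bar M\times\bar M$ with values in $[L,L]/\pi[L,L]$ is well defined, and by Nakayama over $\cl{O}_K$ the theorem is indeed equivalent to the statement that these truncated brackets span $[L,L]/\pi[L,L]$. The paper never passes to this single quotient: it runs a descending induction on powers of $\pi$ starting from $\pi^2[L,L]\subseteq[M,M]$ (Remark \ref{rp2llmm}, Corollaries \ref{cl01} and \ref{cl2n}), using previously established containments to absorb exactly the cross terms that your truncated bracket either kills (degrees summing to more than $n$) or pushes into degree $0$ (degrees summing to exactly $n$). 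Your framing could in principle streamline that bookkeeping, but note that it does not remove it: since pairs of degrees summing to $n$ contribute to the degree-$0$ piece, you must cover degree $0$ first and then descend through the positive degrees, which is the mod-$\pi$ shadow of the paper's induction. (A minor inaccuracy: the computation of $[L,L]$ in Lemma \ref{lLL} needs only $(p,n)\neq(2,2)$ and rests on Lemmas \ref{ltaneqa} and \ref{lcyc0}; the hypothesis $n\ges 3$ is not what makes it work.)

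The genuine gap is that everything after the reduction is asserted rather than proved: the ``case analysis producing explicit brackets of elements of $\bar M$ that cover each graded piece'' is precisely the content of Lemmas \ref{lcal2}--\ref{lcal3ce} and Corollaries \ref{cl01}, \ref{cl2n}, i.e.\ the entire substance of the proof, and it is exactly where the hypotheses enter; deferring it with ``I expect this to be the main obstacle'' leaves the theorem unproved. Moreover your diagnosis of where the difficulty sits is off. Covering the degree-$0$ piece uses brackets between degrees $j$ and $n-j$ and goes through with no exclusion of $(3,3)$ (Lemmas \ref{lcal2d}, \ref{lcal3b}, \ref{lcal3d}, \ref{lcal3a}, \ref{lcal3ce}); the delicate piece is degree $1$, when the pivot direction of the hyperplane involves $\tau_{n-1}$ in degree $0$ or lies in degree $1$: there one needs $\ol{\tau}_{n-1}\notin\kappa_K$ from Lemma \ref{lspcbasis} and at least two linearly independent elements among the $\ol{\xi}_i=\ol{\tau}_i-\vartheta(\ol{\tau}_i)$ (Remark \ref{rxisigma}, Remark \ref{rcyc2}), and it is this count that fails when $p\mid n$ and $n=3$ — this is where $(p,n)\neq(3,3)$ is used (Lemmas \ref{lcal2c} and \ref{lcal1}). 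Finally, your closing claim that the theorem is ``genuinely false'' for $(p,n)=(3,3)$ is unsupported: the paper merely excludes that case, and you exhibit no maximal submodule $M$ witnessing failure, so neither the theorem nor the sharpness of its hypotheses is established by the proposal as written.
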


The theorem follows from 
Lemma \ref{lMall}, Corollary \ref{cl01}, and Corollary \ref{cl2n}.
The corollaries themselves are a consequence of a rather long
list of technical lemmas.
We do not make the overall assumption that $n\ges 3$.
Notably, Lemma \ref{lLL} works for $n=2$ and $p\neq 2$ 
as well, and it provides a computation
of $[sl_1(\Delta),sl_1(\Delta)]$ alternative to the one given in
\cite{NS2019},  for instance.
On the other hand, the reader may assume that $n\ges 3$ from the beginning
and not loose anything fundamental.

Recall the realization of $sl_1(\Delta)$ given in Section \ref{divalg}.
We denote
$L=sl_1(\Delta)$, $L_0=\cl{O}_F^0$, 
and $L_j= \cl{O}_F\Pi^j$ for $0<j<n$.
Hence,
$$
L = \bigoplus_{0\les j<n}L_j,
$$
and we observe that $L$ is graded over $\bb{Z}/n\bb{Z}$
as an $\cl{O}_K$-Lie algebra.
We define $\Lambda$ to be the set of pairs $(i,j)$ of integers 
such that $0\les i,j <n$ and $(i,j)\neq (0,0)$. We endow $\Lambda$ with the binary relation
$$
(i,j)\les (l,k)
\quad
:\iff
\quad 
j<k \mbox{ or }(j=k\mbox{ and } i\les l),
$$
which is a linear order.
If we denote an element of $\Lambda$ by $\lambda$, say, then we
denote its components by $(\lambda_0,\lambda_1)$.
For $\lambda\in\Lambda$, we define $\Lambda_\lambda=\{\eta\in\Lambda: \eta < \lambda\}$.
Recall that we denote by $\mr{pr}:\cl{O}_F\rar \kappa_F$ the projection,
and that we adopt the notation $\overline{\alpha}=\mr{pr}(\alpha)$
when convenient. 

\begin{lemma}
\label{lspcbasis}
There exists a basis $(\tau_i)_{0\les i <n}$
of $\cl{O}_F$ over $\cl{O}_K$ such that
the following conditions hold.
\begin{enumerate}
\item 
$(\tau_i)_{1\les i <n}$ is a basis of $\cl{O}_F^0$ over $\cl{O}_K$.

\item 
If $p\nmid n$ then $\tau_0=1$.

\item 
If $p\,|\,n$ then $\overline{\tau}_1 = 1$.

\item 
If $n\ges 2$ and $(p,n)\neq (2,2)$ then $\overline{\tau}_{n-1}\not\in \kappa_K$.
\end{enumerate}
\end{lemma}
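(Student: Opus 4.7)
My plan is to reduce the statement to Lemma \ref{lbascyc} applied to the residue extension $\kappa_F/\kappa_K$ and then lift the resulting basis to $\cl{O}_F$. Since $F/K$ is unramified and cyclic Galois of degree $n$, the extension $\kappa_F/\kappa_K$ is cyclic Galois of degree $n$ as well, and $\mr{ch}(\kappa_K)=p$. Applying Lemma \ref{lbascyc} to $\kappa_F/\kappa_K$ yields a basis $(\eta_i)_{0\les i<n}$ of $\kappa_F$ over $\kappa_K$ such that $(\eta_i)_{1\les i<n}$ is a basis of $\kappa_F^0$ over $\kappa_K$, with $\eta_0=1$ when $p\nmid n$, $\eta_1=1$ when $p\,|\,n$, and $\eta_{n-1}\not\in\kappa_K$ when $n\ges 2$ and $(p,n)\neq(2,2)$.

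Next I would lift this basis. For $1\les i<n$, Lemma \ref{lpiof} gives $\mr{pr}(\cl{O}_F^0)=\kappa_F^0$, so I choose $\tau_i\in\cl{O}_F^0$ with $\overline{\tau}_i=\eta_i$. For $\tau_0$ I split cases. If $p\nmid n$, I set $\tau_0=1$; then $\overline{\tau}_0=1=\eta_0$ and $T_{F/K}(\tau_0)=n\in\cl{O}_K^*$. If $p\,|\,n$, I pick any lift $\tau_0\in\cl{O}_F$ of $\eta_0$; since $(\eta_i)_{1\les i<n}$ already spans $\kappa_F^0$, the element $\eta_0$ is not in $\kappa_F^0$, so the reduction of $T_{F/K}(\tau_0)$ is nonzero in $\kappa_K$, and hence $v_K(T_{F/K}(\tau_0))=0$.

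It remains to check the four properties. Because the reductions $(\overline{\tau}_i)_{0\les i<n}=(\eta_i)$ form a basis of $\kappa_F$ over $\kappa_K$, the first item of Lemma \ref{cgenogenf} shows that $(\tau_i)_{0\les i<n}$ generates $\cl{O}_F$ over $\cl{O}_K$, and by rank it is a basis. For item (1), Remark \ref{rdecO} combined with $v_K(T_{F/K}(\tau_0))=0$ yields $\cl{O}_F=\cl{O}_K\tau_0\oplus\cl{O}_F^0$, so the subset $(\tau_i)_{1\les i<n}\subseteq\cl{O}_F^0$, completing $\tau_0$ to a basis of $\cl{O}_F$, must be a basis of $\cl{O}_F^0$ (equivalently, apply Lemma \ref{cgenogenf}(2) to the $\cl{O}_K$-span of $(\tau_i)_{1\les i<n}$). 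Items (2), (3), and (4) are immediate from the choices of $\tau_0$, $\tau_1$, and $\tau_{n-1}$. The proof is essentially a lifting exercise, so no step is a real obstacle; the only point requiring care is the construction of $\tau_0$ in the case $p\,|\,n$, which is handled by Remark \ref{rdecO} together with the observation that $\eta_0\not\in\kappa_F^0$.
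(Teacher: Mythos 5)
Your proof is correct and follows essentially the same route as the paper: apply Lemma \ref{lbascyc} to the residue extension $\kappa_F/\kappa_K$, then lift the resulting basis to $\cl{O}_F$ using Lemma \ref{lpiof} and Nakayama's Lemma (the latter packaged in your case through Lemma \ref{cgenogenf} and Remark \ref{rdecO}). The paper's version is more terse, but the underlying lifting argument and the handling of the $p\nmid n$ versus $p\,|\,n$ cases for $\tau_0$ are the same as yours.
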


\begin{proof}
Let $(t_i)_{0\les i < n}$ be a basis of $\kappa_F$ over $\kappa_K$
with the properties stated in Lemma \ref{lbascyc}.
From Nakayama's Lemma and Lemma \ref{lpiof},
it follows that there exists a basis
$(\tau_i)_{0\les i < n}$ of $\cl{O}_F$ over $\cl{O}_K$
such that 
$(\tau_i)_{1\les i < n}$ is a basis of $\cl{O}_F^0$ over $\cl{O}_K$,
$\tau_0=1$ if $p\nmid n$, and $\overline{\tau}_i=t_i$ for $0\les i <n$.
The other required properties are immediate.
\end{proof}

\bigskip

We fix a basis $(\tau_i)_{0\les i<n}$  of 
$\cl{O}_F$ over $\cl{O}_K$ with the properties
stated in Lemma \ref{lspcbasis}.
For $\eta\in\Lambda$, we define $x_\eta = \tau_{\eta_0}\Pi^{\eta_1}$.
Then $(x_\eta)_{\eta\in\Lambda}$ is a basis of $L$ over $\cl{O}_K$.
For $\lambda\in\Lambda$, $e:\Lambda_\lambda\rar \cl{O}_K$, and
$\eta\in\Lambda$ we define
$$
y_\eta = \left\{
\begin{array}{ll}
x_\eta +e_\eta x_\lambda & \mbox{ if }\eta<\lambda\\
\pi x_\lambda & \mbox{ if }\eta = \lambda\\
x_\eta & \mbox{ if }\eta > \lambda,\\
\end{array}
\right.
$$
and $M_{\lambda, e}=\mr{span}_{\cl{O}_K}(y_\eta:\,\eta\in \Lambda)$,
which is a maximal proper $\cl{O}_K$-submodule of $L$.
For the next lemma see, for instance, \cite[Section 5.2]{AWalgmod}.

\begin{lemma}
\label{lMall}
Let $M$ be a maximal proper $\cl{O}_K$-submodule of $sl_1(\Delta)$. Then there exist
$\lambda\in\Lambda$ and $e:\Lambda_\lambda\rar \cl{O}_K$
such that $M=M_{\lambda,e}$. 
\end{lemma}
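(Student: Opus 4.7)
The plan is to exploit the fact that, over the discrete valuation ring $\cl{O}_K$, a maximal proper submodule $M$ of the free module $L=sl_1(\Delta)$ has simple quotient $L/M\cong \kappa_K$, and then use the linear order on $\Lambda$ to extract a canonical ``leading index'' $\lambda$ for $M$.

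First, I would observe that $L/M$ is a simple $\cl{O}_K$-module, hence isomorphic to $\kappa_K=\cl{O}_K/\pi\cl{O}_K$. In particular, $\pi L\subseteq M$, so $\pi x_\eta\in M$ for every $\eta\in\Lambda$. Since $M$ is proper, not all the basis elements $x_\eta$ lie in $M$, so there is a (unique) largest $\lambda\in\Lambda$ with $x_\lambda\notin M$. For every $\eta\in\Lambda$ with $\eta>\lambda$ we then have $x_\eta\in M$ by the maximality of $\lambda$.

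Next, since $\overline{x_\lambda}$ is a nonzero element of the one-dimensional $\kappa_K$-vector space $L/M$, it generates $L/M$. Hence, for each $\eta\in\Lambda_\lambda$ there exists $e_\eta\in\cl{O}_K$, unique modulo $\pi$, such that $x_\eta-e_\eta x_\lambda\in M$; fixing such lifts defines a function $e:\Lambda_\lambda\to\cl{O}_K$. By construction, each generator $y_\eta$ of $M_{\lambda,e}$ lies in $M$, so $M_{\lambda,e}\subseteq M$.

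To conclude equality, I would check that $M_{\lambda,e}$ is itself a maximal proper $\cl{O}_K$-submodule of $L$. The change-of-basis matrix from $(x_\eta)_{\eta\in\Lambda}$ to $(y_\eta)_{\eta\in\Lambda}$ is triangular with respect to the order on $\Lambda$ and its diagonal consists of $1$'s except for one entry equal to $\pi$; so $L/M_{\lambda,e}$ is generated by $\overline{x_\lambda}$ and annihilated by $\pi$, hence isomorphic to $\kappa_K$. Thus $M_{\lambda,e}$ is maximal proper, and the inclusion $M_{\lambda,e}\subseteq M$ forces $M_{\lambda,e}=M$. The only computational point worth checking carefully is this last index calculation, which is precisely the standard Hermite-normal-form fact referenced in \cite[Section 5.2]{AWalgmod}; I do not anticipate any conceptual obstacle.
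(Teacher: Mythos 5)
Your proof is correct. For this lemma the paper itself does not write out an argument: it simply points to the theory of Hermite canonical forms over a discrete valuation ring (\cite[Section 5.2]{AWalgmod}), the intended reasoning being that a maximal proper $\cl{O}_K$-submodule has a generating matrix in Hermite form with diagonal $1,\dots,1,\pi,1,\dots,1$, which is exactly the shape of $M_{\lambda,e}$. Your route is a self-contained replacement for that citation: from the simplicity of $L/M$ you get $L/M\simeq\kappa_K$, hence $\pi L\subseteq M$; the largest $\lambda$ with $x_\lambda\notin M$ plays the role of the pivot; the coefficients $e_\eta$ come from writing $\overline{x_\eta}$ as a $\kappa_K$-multiple of $\overline{x_\lambda}$ in the quotient; and the only place any normal-form/determinant fact enters is the verification that $M_{\lambda,e}$ is itself maximal proper, which, as you say, is immediate since the matrix expressing $(y_\eta)$ in terms of $(x_\eta)$ is triangular with determinant a unit times $\pi$, so $L/M_{\lambda,e}$ is nonzero, cyclic, and killed by $\pi$. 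Two cosmetic remarks: the paper's generators are $y_\eta=x_\eta+e_\eta x_\lambda$, so you should choose $e_\eta$ with $x_\eta+e_\eta x_\lambda\in M$ (the negative of the element you defined), which changes nothing; and the uniqueness of $e_\eta$ modulo $\pi$ is not needed, only existence. In short, your argument proves directly the special case of the Hermite-form statement that the paper outsources; what the paper's citation buys is brevity, what yours buys is independence from \cite{AWalgmod}.
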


\begin{remark}
\label{rdownup}
In most proofs of the statements below we construct 
an $\cl{O}_K$-submodule $U$ of $\cl{O}_F$
(respectively, of $\cl{O}_F^0$), usually through a set of generators,
say $(\alpha_i)_{i\in I}$, where $I$ is a set of indices.
In these situations, the aim is to prove that $U=\cl{O}_F$
(respectively, $U=\cl{O}_F^0$). This is achieved by proving that
$\mr{pr}(U)=\kappa_F$ (respectively, $\mr{pr}(U)=\kappa_F^0$), and by
applying Corollary \ref{cgenogenf}.
Observe that $\mr{pr}(U)$ is generated by $(\overline{\alpha}_i)_{i\in I}$
over $\kappa_K$. 
The proof of $\mr{pr}(U)=\kappa_F$ (respectively, $\mr{pr}(U)=\kappa_F^0$) is achieved by applying the
relevant results of Section \ref{sliecyc} to the residue field extension
$\kappa_F/\kappa_K$ and the generator $\vartheta$ given by the reduction modulo
$\pi$ of the restriction of $\theta$ to $\cl{O}_F$.
We observe that $(\overline{\tau}_i)_{0\les i <n}$ 
is a basis of $\kappa_F$ over $\kappa_K$ that satisfies the properties of Lemma
\ref{lbascyc}, and we note that $\mr{ch}(\kappa_K)=p$.
We will make constant use of Lemma \ref{lcomm3} without further mention.
Also, recall that $\Pi^n=\pi$.
\end{remark}

\begin{lemma}
\label{lLL}
Assume that $(p,n)\neq (2,2)$. Then 
$$
[L,L]=\pi L_0\oplus \left(\bigoplus_{1\les j<n}L_j\right).
$$
\end{lemma}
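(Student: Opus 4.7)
The plan is to exploit the $\bb{Z}/n\bb{Z}$-grading of $L$ as an $\cl{O}_K$-Lie algebra, so that $[L,L]=\bigoplus_{k=0}^{n-1}[L,L]_k$, where $[L,L]_k$ is the $\cl{O}_K$-span of those brackets $[L_i,L_j]$ with $i+j\equiv k\pmod n$. By Lemma \ref{lcomm3}, every such bracket is computable explicitly, and whenever $i+j\ges n$ a factor of $\pi$ enters through $\Pi^n=\pi$. I will show separately that $[L,L]_0=\pi L_0$ and $[L,L]_k=L_k$ for each $1\les k<n$.

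For $[L,L]_0$: since $L_0\subseteq\cl{O}_F$ is commutative, $[L_0,L_0]=0$; each remaining contributor $[L_j,L_{n-j}]$ carries a factor of $\pi$, giving $[L,L]_0\subseteq\pi L_0$. For the reverse inclusion, inside $[L_1,L_{n-1}]$ I would focus on the subfamily $[\Pi,\beta\Pi^{n-1}]=(\theta(\beta)-\beta)\pi$ for $\beta\in\cl{O}_F$. Reduction modulo $\pi$ sends the $\cl{O}_K$-span of $\{\theta(\beta)-\beta:\beta\in\cl{O}_F\}$ to the $\kappa_K$-span of $\{\vartheta(\bar\beta)-\bar\beta:\bar\beta\in\kappa_F\}$, which equals $\kappa_F^0$ by the additive Hilbert 90 (Lemma \ref{lcyc1}). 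Since the $\cl{O}_K$-span already lies in $\cl{O}_F^0$, Corollary \ref{cgenogenf}(2) promotes the equality to $\cl{O}_F^0$, yielding $[L,L]_0\supseteq \pi L_0$.

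For $[L,L]_k$ with $1\les k<n$, the inclusion $[L,L]_k\subseteq L_k$ is immediate from the grading. For $k\ges 2$ I would use $[L_1,L_{k-1}]$, whose generators are $(\alpha\theta(\beta)-\beta\theta^{k-1}(\alpha))\Pi^k$ for $\alpha,\beta\in\cl{O}_F$. The scalar coefficients, reduced modulo $\pi$, form the $\kappa_K$-span of $\{\bar\alpha\vartheta(\bar\beta)-\bar\beta\vartheta^{k-1}(\bar\alpha):\bar\alpha,\bar\beta\in\kappa_F\}$, which is all of $\kappa_F$ by Lemma \ref{lcyc2} applied with integer $k-1$ (the condition $(k-1)+1\not\equiv_n 0$ is ensured by $1\les k<n$). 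Corollary \ref{cgenogenf}(1) then lifts this to $\cl{O}_F$, giving $[L_1,L_{k-1}]=L_k$.

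For $k=1$, the only grade-one brackets without a factor of $\pi$ come from $[L_0,L_1]$, whose generators are $\beta(\alpha-\theta(\alpha))\Pi$ with $\alpha\in\cl{O}_F^0$ and $\beta\in\cl{O}_F$. The image of $[L_0,L_1]$ in $L_1/\pi L_1\simeq\kappa_F\,\Pi$ is the $\kappa_F$-subspace generated by $\{\bar\alpha-\vartheta(\bar\alpha):\bar\alpha\in\kappa_F^0\}$, which is nonzero precisely when some element of $\kappa_F^0$ is not fixed by $\vartheta$. This is exactly Lemma \ref{ltaneqa} with $j=1$, valid since $(\mr{ch}(\kappa_K),n)=(p,n)\neq (2,2)$; the subspace is therefore all of $\kappa_F$, and Corollary \ref{cgenogenf}(1) yields $[L_0,L_1]=L_1$. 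The main obstacle is precisely this $k=1$ step: it is the sole place where the hypothesis $(p,n)\neq (2,2)$ is needed, and it genuinely fails when $p=n=2$, since there $\kappa_F^0$ is the $(-1)$-eigenspace of $\vartheta$, which in characteristic $2$ collapses onto the fixed subspace and forces $\bar\alpha-\vartheta(\bar\alpha)=0$ on all of $\kappa_F^0$.
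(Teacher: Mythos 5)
Your proof is correct, and it takes a somewhat different route from the paper's in two of the three pieces. The inclusion $[L,L]_0\subseteq\pi L_0$ and the division of $[L,L]$ into graded components are the same; the differences are which brackets you compute and which of the cyclic-extension lemmas you invoke. For $\pi L_0\subseteq[L,L]$, you use $[\Pi,\beta\Pi^{n-1}]=(\theta(\beta)-\beta)\pi$ together with additive Hilbert 90 (Lemma \ref{lcyc1}), whereas the paper uses $[\tau_0\Pi,\tau_l\Pi^{n-1}]=\pi\bigl(\tau_0\theta(\tau_l)-\tau_l\theta^{-1}(\tau_0)\bigr)$ together with Lemma \ref{lcyc0}; your version is a touch cleaner since it avoids having to consider $\tau_0$. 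For the degrees $1\les k<n$, the paper treats them uniformly via $[L_0,L_k]$ and Lemma \ref{ltaneqa}, while you split off $k=1$ (handled by $[L_0,L_1]$ and Lemma \ref{ltaneqa}) from $k\ges 2$ (handled by $[L_1,L_{k-1}]$ and Lemma \ref{lcyc2}). This split is a genuine observation: your $k\ges 2$ argument does not use the hypothesis $(p,n)\neq(2,2)$ at all, so you isolate the obstruction to the single case $k=1$, $n=2$, $p=2$, where, as you correctly note, $\kappa_F^0$ coincides with the fixed subfield of $\vartheta$ in characteristic $2$ and the step collapses. The paper's uniform approach is economical (one lemma covers all $j$) and sets up the bracket pattern $[x_{i0},x_{lj}]$ that recurs in the later, longer calculations of Section \ref{sO_K}, but it applies Lemma \ref{ltaneqa} for every $j$ and so does not visibly localize where $(p,n)\neq(2,2)$ enters. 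Both are valid; each makes a different feature of the argument transparent.
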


\begin{proof}
Since the lemma is trivial for $n=1$, we assume that $n\ges 2$.
We denote $L'=[L,L]$ and $L'_j=L'\cap L_j$, for $0\les j <n$. 
Observe that $L'_j$ is generated over $\cl{O}_K$ by the brackets 
$[x_\eta,x_\mu]$ where $\eta,\mu\in\Lambda$ and $\eta_1+\mu_1\equiv_n j$;
also, $L'$ is homogeneous, that is, $L'=\bigoplus_{0\les j <n}L'_j$.
Observe that if $\eta_1+\mu_1=0$ then $[x_\eta,x_\mu]=0$. Hence, $L'_0\subseteq \pi L_0$.
It is enough to show that $\pi L_0\subseteq L'_0$ and $L_j\subseteq L'_j$ for $0<j<n$.

We first argue for $L_j$.
Let $0<j<n$. 
The brackets $[x_{i0},x_{lj}]=(\tau_i-\theta^j(\tau_i))\tau_l\Pi^j$ 
belong to $L'_j$ for $1\les i<n$ and $0\les l<n$.
We will show that $N_j:=\mr{span}_{\cl{O}_K}([x_{i0},x_{lj}]: \,1\les i<n\mbox{ and } 0\les l<n)$
is equal to $L_j$, from which $L_j\subseteq L'_j$ follows.
We have $N_j=U_j\Pi^j$, where 
$U_j=
\mr{span}_{\cl{O}_K}([\tau_i-\theta^j(\tau_i)]\tau_l:  \,1\les i<n\mbox{ and } 0\les l<n)$.
Now the desired result follows from Remark \ref{rdownup} and Lemma \ref{ltaneqa}.

Finally we argue for $\pi L_0$.
The brackets $[x_{01},x_{l,n-1}]=\pi (\tau_0\theta(\tau_l)-\tau_l\theta^{-1}(\tau_0))$ 
belong to $L'_0$ for $0\les l<n$.
We will show that $N_0:=\mr{span}_{\cl{O}_K}([x_{01},x_{l,n-1}]: \,0\les l<n)$
is equal to $\pi L_0$, from which $\pi L_0\subseteq L'_0$ follows.
We have $N_0=\pi U_0$, where 
$U_0=
\mr{span}_{\cl{O}_K}(\tau_0\theta(\tau_l)-\tau_l\theta^{-1}(\tau_0):  \,0\les l<n)$.
Now the desired result follows from Remark \ref{rdownup} and Lemma \ref{lcyc0}.
\end{proof}

\begin{remark}
\label{rp2llmm}
Let $M$ be a maximal proper $\cl{O}_K$-submodule of $L$.
Clearly, $[M,M]\subseteq [L,L]$; hence, in order to prove that
$[L,L]= [M,M]$, it is enough to prove that $[L,L]\subseteq [M,M]$.
For  $(p,n)\neq (2,2)$,
it is enough to prove that $\pi L_0\subseteq [M,M]$
and $L_m\subseteq [M,M]$ for $0<m<n$;
see Lemma \ref{lLL}.
Observe that $\pi L\subseteq M$, 
hence, $\pi^2[L,L]\subseteq [M,M]$.
\end{remark}

\medskip 

For the next sequence of lemmas and corollaries, we take 
$\lambda\in\Lambda$ and $e:\Lambda_\lambda\rar \cl{O}_K$, 
and we define $M=M_{\lambda, e}$.
We order the lemmas according to the order in
which they are applied in the corollaries.

\begin{lemma}
\label{lcal2}
Assume that $\lambda_1=0$,
and let $m$ be an integer such that  $2\les m < n$.
Then 
$L_m\subseteq [M,M]$.
\end{lemma}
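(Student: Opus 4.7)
The plan is to exhibit $L_m$ as the $\cl{O}_K$-span of brackets of generators of $M$ whose indices lie above $\lambda$ in $\Lambda$. Since $\lambda_1 = 0$, every $\eta = (\eta_0, \eta_1) \in \Lambda$ with $\eta_1 \ges 1$ satisfies $\eta > \lambda$, so the corresponding generator is $y_\eta = x_\eta$. In particular, for any $0 \les i_1, i_2 < n$, both $x_{(i_1,1)} = \tau_{i_1}\Pi$ and $x_{(i_2,m-1)} = \tau_{i_2}\Pi^{m-1}$ lie in $M$; note that $m-1 \ges 1$ since $m \ges 2$.

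Next I would compute, via Lemma \ref{lcomm3}, that
$$
[x_{(i_1,1)},\, x_{(i_2,m-1)}] = \bigl(\tau_{i_1}\theta(\tau_{i_2}) - \tau_{i_2}\theta^{m-1}(\tau_{i_1})\bigr)\Pi^{m},
$$
and let $N_m$ denote the $\cl{O}_K$-span of these brackets as $(i_1, i_2)$ varies; by construction $N_m \subseteq [M,M]$. Writing $N_m = U_m \Pi^m$ with
$$
U_m = \mr{span}_{\cl{O}_K}\bigl(\tau_{i_1}\theta(\tau_{i_2}) - \tau_{i_2}\theta^{m-1}(\tau_{i_1}) : 0 \les i_1, i_2 < n\bigr),
$$
it suffices to show that $U_m = \cl{O}_F$, since then $N_m = L_m$.

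For the final step I would apply the strategy outlined in Remark \ref{rdownup}: reduce modulo $\pi$ and invoke Lemma \ref{lcyc2} for the residue extension $\kappa_F/\kappa_K$ with the parameter $k = m-1$. The hypothesis $k + 1 = m \not\equiv_n 0$ of Lemma \ref{lcyc2} is precisely guaranteed by $2 \les m < n$, so the lemma yields $\mr{span}_{\kappa_K}\bigl(\bar{\tau}_{i_1}\vartheta(\bar{\tau}_{i_2}) - \bar{\tau}_{i_2}\vartheta^{m-1}(\bar{\tau}_{i_1})\bigr) = \kappa_F$, i.e.\ $\mr{pr}(U_m) = \kappa_F$. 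Corollary \ref{cgenogenf} then upgrades this to $U_m = \cl{O}_F$, and we conclude $L_m = U_m \Pi^m = N_m \subseteq [M,M]$.

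There is no real obstacle: the only point requiring attention is that the bracket indices $(i_1,1)$ and $(i_2,m-1)$ must both exceed $\lambda$ in the order on $\Lambda$, which is automatic from $\lambda_1 = 0$ and $m-1 \ges 1$, and that Lemma \ref{lcyc2} is applicable, which is exactly ensured by the range $2 \les m < n$.
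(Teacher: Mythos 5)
Your proof is correct and follows essentially the same route as the paper: you bracket the generators $y_{(i,1)}=x_{(i,1)}$ and $y_{(l,m-1)}=x_{(l,m-1)}$ (both above $\lambda$ since $\lambda_1=0$), identify the resulting span as $U\Pi^m$, and conclude via Remark \ref{rdownup} together with Lemma \ref{lcyc2} applied with $k=m-1$, exactly as in the paper's argument. The justification that $k+1=m\not\equiv_n 0$ follows from $2\les m<n$ is the same key point the paper relies on.
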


\begin{proof} 
Observe that $n\ges 3$.
Denote $\eta=(i,1)$ and $\mu=(l,m-1)$,
for $0\les i <n$ and $0\les l<n$.
Observe that $\lambda<\eta$ and $\lambda <\mu$, 
so
that 
$
[y_\eta,y_\mu]
=
[x_\eta,x_\mu]
$,
which belongs to $[M,M]$.
We will show that $N:=\mr{span}_{\cl{O}_K}([x_{i1},x_{l,m-1}]:
\,0\les i<n\mbox{ and } 0\les l<n)$
is equal to $L_m$, from which $L_m\subseteq [M,M]$ follows.
Since
$
[x_{i1},x_{l,m-1}]
=
(\tau_{i}\theta(\tau_{l})-\tau_{l}\theta^{m-1}(\tau_{i}))\Pi^m
$,
we have $N=U\Pi^m$, where 
$U=
\mr{span}_{\cl{O}_K}(\tau_{i}\theta(\tau_{l})-\tau_{l}\theta^{m-1}(\tau_{i}): 
 \,0\les i<n\mbox{ and } 0\les l<n)$.
Now the desired result follows from Remark \ref{rdownup} and Lemma \ref{lcyc2}.
\end{proof}

\begin{lemma}
\label{lcal2b}
Assume that $n\ges 3$, $\lambda_1=0$, and $\lambda_0<n-1$.
Then 
$L_1\subseteq [M,M]$.
\end{lemma}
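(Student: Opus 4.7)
The plan is to exhibit a single family of brackets $[y_\eta, y_\mu]$ of elements of $M$ whose $\cl{O}_K$-span already fills $L_1$. Since $L_1$ is the degree-$1$ component of the grading, I will pick $\eta$ with $\eta_1 = 0$ and $\mu$ with $\mu_1 = 1$. To avoid the correction term $e_\eta x_\lambda$ that appears in $y_\eta$ when $\eta < \lambda$, and the extra factor of $\pi$ in $y_\lambda$, I want both $\eta$ and $\mu$ to lie strictly above $\lambda$ in the order on $\Lambda$. Since $\lambda_1 = 0$, every index $\mu = (l, 1)$ automatically satisfies $\mu > \lambda$, so $y_\mu = x_\mu$ for all $0 \les l < n$; and the hypothesis $\lambda_0 < n - 1$ is exactly what allows the choice $\eta := (n-1, 0)$ to also satisfy $\eta > \lambda$, giving $y_\eta = x_\eta = \tau_{n-1}$.

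With these choices, Lemma \ref{lcomm3} yields
$$
[y_\eta, y_\mu] \;=\; [\tau_{n-1},\, \tau_l \Pi] \;=\; \tau_l\bigl(\tau_{n-1} - \theta(\tau_{n-1})\bigr)\Pi \;\in\; [M,M].
$$
As $l$ ranges over $0,\ldots,n-1$, the $\cl{O}_K$-linear combinations of the $\tau_l$ span $\cl{O}_F$, producing the inclusion $\cl{O}_F(\tau_{n-1} - \theta(\tau_{n-1}))\Pi \subseteq [M,M]$.

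It remains to argue that $\tau_{n-1} - \theta(\tau_{n-1})$ is a unit in $\cl{O}_F$, so that this inclusion becomes $\cl{O}_F\Pi = L_1 \subseteq [M,M]$. Since $F/K$ is unramified, $\cl{O}_F$ is local with maximal ideal $\pi\cl{O}_F$, so it suffices to verify that the reduction modulo $\pi$ is nonzero in $\kappa_F$. Here the specific basis fixed via Lemma \ref{lspcbasis} comes in: part (4) of that lemma, applicable because $n \ges 3$, gives $\overline{\tau}_{n-1} \notin \kappa_K$, and since $\kappa_K$ is the fixed field of the induced generator $\vartheta \in \mr{Gal}(\kappa_F/\kappa_K)$, this forces $\vartheta(\overline{\tau}_{n-1}) \neq \overline{\tau}_{n-1}$, which is the desired nonvanishing. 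I do not expect a significant obstacle: the argument reduces to a one-bracket computation, and the hypothesis $\lambda_0 < n-1$ is precisely what lets the pivotal index $(n-1, 0)$ sit strictly above $\lambda$, sidestepping any $\pi$-scaling or correction term.
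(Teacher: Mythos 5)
Your proposal is correct and follows essentially the same route as the paper: the same choice of indices $\eta=(n-1,0)$ and $\mu=(l,1)$ (both strictly above $\lambda$ thanks to $\lambda_1=0$ and $\lambda_0<n-1$), the same brackets $\tau_l(\tau_{n-1}-\theta(\tau_{n-1}))\Pi$, and the same key input $\overline{\tau}_{n-1}\notin\kappa_K$ from Lemma \ref{lspcbasis}. The only cosmetic difference is the finishing step: you observe directly that $\tau_{n-1}-\theta(\tau_{n-1})$ is a unit of $\cl{O}_F$ (using that $F/K$ is unramified), whereas the paper reduces modulo $\pi$ and invokes the Nakayama-type argument of Remark \ref{rdownup} and Lemma \ref{cgenogenf}; both are valid.
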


\begin{proof} 
Denote $\eta=(n-1,0)$ and $\mu=(l,1)$, for $0\les l<n$.
Observe that $\lambda<\eta$ and $\lambda <\mu$, 
so
that 
$
[y_\eta,y_\mu]
=
[x_\eta,x_\mu]
$,
which belongs to $[M,M]$.
We will show that $N:=\mr{span}_{\cl{O}_K}([x_{n-1,0},x_{l1}]:
\, 0\les l<n)$
is equal to $L_1$, from which $L_1\subseteq [M,M]$ follows.
Since,
$
[x_{n-1,0},x_{l1}]
=
(\tau_{n-1}-\theta^{}(\tau_{n-1}))\tau_{l}\Pi
$,
we have $N=U\Pi$, where 
$U=
\mr{span}_{\cl{O}_K}([\tau_{n-1}-\theta^{}(\tau_{n-1})]\tau_{l}: 
0\les l<n)$.
Now the desired result follows from Remark \ref{rdownup} and 
the fact that $\overline{\tau}_{n-1}\not\in\kappa_K$.
\end{proof}

\begin{lemma}
\label{lcal2c}
Assume that $n\ges 3$,
$(p,n)\neq (3,3)$,
$\lambda_1=0$, and $\lambda_0= n-1$.
Then 
$L_1\subseteq [M,M]$.
\end{lemma}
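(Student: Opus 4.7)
The plan is to mimic the strategy of Lemma \ref{lcal2b}, but with a more delicate choice of bracket generators, since the maximal degree-zero index $\lambda=(n-1,0)$ no longer admits a ``free'' strictly-above-$\lambda$ companion like $x_{(n-1,0)}$. I would start by considering the brackets $[y_{(i,0)},y_{(l,1)}]$ for $1\les i\les n-2$ and $0\les l<n$: these pair an element $\eta=(i,0)$ strictly below $\lambda$, so that $y_\eta=\tau_i+e_{(i,0)}\tau_{n-1}$, with $\mu=(l,1)$ strictly above $\lambda$, so that $y_\mu=\tau_l\Pi$. A direct application of Lemma \ref{lcomm3} then yields
\[
[y_{(i,0)},y_{(l,1)}] \;=\; \tau_l\bigl(\xi_i+e_{(i,0)}\xi_{n-1}\bigr)\Pi,
\]
where $\xi_j:=\tau_j-\theta(\tau_j)$.

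Setting $U:=\mr{span}_{\cl{O}_K}\{\tau_l(\xi_i+e_{(i,0)}\xi_{n-1}):1\les i\les n-2,\,0\les l<n\}$, one has $U\Pi\subseteq [M,M]\cap L_1$, so by Remark \ref{rdownup} together with Corollary \ref{cgenogenf} the problem reduces to proving $\mr{pr}(U)=\kappa_F$. Since $(\overline{\tau}_l)_{0\les l<n}$ is a $\kappa_K$-basis of $\kappa_F$, the family $\{\overline{\tau}_l\alpha\}_l$ spans $\kappa_F$ over $\kappa_K$ as soon as $\alpha\neq 0$, and it would therefore suffice to exhibit a single index $i\in\{1,\ldots,n-2\}$ for which $\overline{\xi_i+e_{(i,0)}\xi_{n-1}}\neq 0$ in $\kappa_F$.

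I would then argue by contradiction, assuming $\overline{\xi}_i=-\overline{e}_{(i,0)}\,\overline{\xi}_{n-1}$ for every $i$ in the above range, and invoking Remark \ref{rxisigma}. If $p\nmid n$, one has $i_0=0$ and $(\overline{\xi}_j)_{1\les j\les n-1}$ is $\kappa_K$-linearly independent; the relation for $i=1$, available since $n\ges 3$, would then give a nontrivial $\kappa_K$-linear dependence between $\overline{\xi}_1$ and $\overline{\xi}_{n-1}$, a contradiction. If $p\,|\,n$, one has $i_0=1$ and $\overline{\xi}_1=0$, so the relation at $i=1$ only forces $\overline{e}_{(1,0)}=0$ (using $\overline{\xi}_{n-1}\neq 0$, which follows from item~(4) of Lemma \ref{lspcbasis}); the contradiction must then be extracted from some index $i$ with $2\les i\les n-2$, by means of the independence of $(\overline{\xi}_j)_{j\neq 1}$.

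The crux of the matter, and the source of the excluded case, is that the range $2\les i\les n-2$ is nonempty only when $n\ges 4$. If $n=3$ and $p\,|\,n$ then $p=3$ and only $i=1$ is at one's disposal, whereupon the argument above collapses and no contradiction can be drawn; this is the unique obstruction, and it is avoided precisely by the hypothesis $(p,n)\neq(3,3)$. All the remaining brackets $[y_\eta,y_\mu]$ landing in $L_1$ — namely those with $\eta=\lambda$, or with $\eta_1,\mu_1\ges 1$ and $\eta_1+\mu_1\equiv_n 1$ but $\eta_1+\mu_1\neq 1$ — carry an extra factor of $\pi$ coming from $\Pi^n=\pi$, so they vanish modulo $\pi$ and cannot salvage the argument in the excluded case.
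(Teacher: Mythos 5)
Your proposal is correct and is essentially the paper's own argument: the same brackets $[y_{(i,0)},y_{(l,1)}]$, the same formula $\tau_l(\xi_i+e_{(i,0)}\xi_{n-1})\Pi$, and the same appeal to Remark \ref{rxisigma} and Remark \ref{rdownup}, with the hypothesis $(p,n)\neq(3,3)$ entering exactly where you place it. The only cosmetic difference is that the paper fixes a single good index from the start ($i_0=1$ if $p\nmid n$, $i_0=2$ if $p\mid n$, noting $i_0<n-1$) and observes nonvanishing directly, whereas you span over all $i\les n-2$ and locate the same index via a contradiction argument.
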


\begin{proof} 
Let $i_0=1$ if $p\nmid n$, and $i_0=2$ if $p\,|\,n$.
Observe that $i_0<n-1$.
Denote $\eta=(i_0,0)$ and $\mu=(l,1)$, for 
$0\les l<n$.
Observe that $\eta< \lambda<\mu$, 
so
that 
$
[y_\eta,y_\mu]
=
[x_\eta,x_\mu]
+
e_\eta
[x_\lambda,x_\mu]
$,
which belongs to $[M,M]$.
By denoting $\xi_j=\tau_j-\theta(\tau_j)$ for $j\in \{i_0,n-1\}$,
we have 
$
[y_\eta,y_\mu]
=
\tau_l(\xi_{i_0}+e_{i_00}\xi_{n-1})\Pi
$.
We will show that $N:=\mr{span}_{\cl{O}_K}([y_{i_00},y_{l1}]:
\, 0\les l<n)$
is equal to $L_1$, from which $L_1\subseteq [M,M]$ follows.
We have $N=U\Pi$, where 
$U=
\mr{span}_{\cl{O}_K}(\tau_l(\xi_{i_0}+e_{i_00}\xi_{n-1}): 0\les l<n)$.
Observe that $\xi_{i_0}+e_{i_00}\xi_{n-1}\not\equiv_\pi 0$; see Remark \ref{rxisigma}.
Now the desired result follows from Remark \ref{rdownup}.
\end{proof}

\begin{lemma}
\label{lcal2d}
Assume that $\lambda_1=0$.
Then 
$\pi L_0\subseteq [M,M]$.
\end{lemma}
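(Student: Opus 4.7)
The plan is to imitate the part of the proof of Lemma \ref{lLL} that produces $\pi L_0\subseteq[L,L]$, with the small twist that the brackets must be produced from elements of $M$ rather than from all of $L$. The key observation is that, since $\lambda_1=0$, every index $\eta\in\Lambda$ with $\eta_1\ges 1$ automatically satisfies $\eta>\lambda$, so $y_\eta=x_\eta$ for all such $\eta$.

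Concretely, I will take $\eta=(i,1)$ and $\mu=(l,n-1)$ for $0\les i,l<n$. Both indices lie strictly above $\lambda$, so $y_\eta=x_\eta$ and $y_\mu=x_\mu$, and Lemma \ref{lcomm3} gives
$$
[y_\eta,y_\mu]=[x_{i,1},x_{l,n-1}]=\bigl(\tau_i\theta(\tau_l)-\tau_l\theta^{-1}(\tau_i)\bigr)\Pi^n=\pi\bigl(\tau_i\theta(\tau_l)-\tau_l\theta^{-1}(\tau_i)\bigr).
$$
Let $N$ be the $\cl{O}_K$-span of these brackets; then $N\subseteq[M,M]$ and $N=\pi U$, where
$$
U=\mr{span}_{\cl{O}_K}\bigl(\tau_i\theta(\tau_l)-\tau_l\theta^{-1}(\tau_i):\,0\les i,l<n\bigr).
$$

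The remaining task is to show $U=\cl{O}_F^0$. Each generator lies in $\cl{O}_F^0$ because $T_{F/K}\circ\theta^{-1}=T_{F/K}$, so $U\subseteq\cl{O}_F^0$. For the reverse inclusion I will follow Remark \ref{rdownup}: it suffices to show $\mr{pr}(U)=\kappa_F^0$, after which Corollary \ref{cgenogenf} yields $U=\cl{O}_F^0$. Fixing $i=0$ and using that $(\overline{\tau}_l)_{0\les l<n}$ is a $\kappa_K$-basis of $\kappa_F$, the $\kappa_K$-span of $\{\overline{\tau}_0\vartheta(\overline{\tau}_l)-\overline{\tau}_l\vartheta^{-1}(\overline{\tau}_0):0\les l<n\}$ coincides with $\{\overline{\tau}_0\vartheta(\beta)-\beta\vartheta^{-1}(\overline{\tau}_0):\beta\in\kappa_F\}$, which equals $\kappa_F^0$ by Lemma \ref{lcyc0} applied to the nonzero element $\overline{\tau}_0$. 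Hence $\mr{pr}(U)=\kappa_F^0$, and the desired conclusion $\pi L_0=\pi\cl{O}_F^0=N\subseteq[M,M]$ follows.

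I do not foresee any serious obstacle: the whole argument is driven by the ordering observation at the start, which forces $y_\eta=x_\eta$ and $y_\mu=x_\mu$ and thereby reduces the problem to the exact computation already performed in Lemma \ref{lLL}. The degenerate case $n=2$ (where $\eta_1=\mu_1=1$) requires no special treatment, as $\theta^{-1}=\theta$ in that setting, and the generator $\overline{\tau}_0$ is nonzero because $(\overline{\tau}_i)_{0\les i<n}$ is a $\kappa_K$-basis of $\kappa_F$.
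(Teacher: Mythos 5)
Your proposal is correct and follows essentially the same route as the paper: since $\lambda_1=0$, the brackets $[y_{(i,1)},y_{(l,n-1)}]=[x_{(i,1)},x_{(l,n-1)}]=\pi(\tau_i\theta(\tau_l)-\tau_l\theta^{-1}(\tau_i))$ lie in $[M,M]$, and reducing modulo $\pi$ and invoking Lemma \ref{lcyc0} via Remark \ref{rdownup} and Corollary \ref{cgenogenf} gives $\pi L_0\subseteq[M,M]$. The only cosmetic difference is that the paper fixes $i=0$ (i.e.\ $\eta=(0,1)$) from the outset, which is exactly the specialization you make anyway in the spanning argument.
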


\begin{proof} 
For $n=1$ the result is trivial.
Assume $n\ges 2$.
Denote $\eta=(0,1)$ and $\mu=(l,n-1)$, for $0\les l<n$.
Observe that $\lambda<\eta$ and $\lambda <\mu$, 
so
that 
$
[y_\eta,y_\mu]
=
[x_\eta,x_\mu]
$,
which belongs to $[M,M]$.
We will show that $N:=\mr{span}_{\cl{O}_K}([x_{01},x_{l,n-1}]:
\, 0\les l<n)$
is equal to $\pi L_0$, from which $\pi L_0\subseteq [M,M]$ follows.
Since
$
[x_{01},x_{l,n-1}]
=
\pi  (\tau_{0}\theta(\tau_{l})-\tau_{l}\theta^{-1}(\tau_{0}))
$,
we have $N=\pi U$, where 
$U=
\mr{span}_{\cl{O}_K}(\tau_{0}\theta(\tau_{l})-\tau_{l}\theta^{-1}(\tau_{0}): 
0\les l<n)$.
Now the desired result follows from Remark \ref{rdownup} and Lemma \ref{lcyc0}.
\end{proof}

\begin{lemma}
\label{lcal3b}
Assume that $n\ges 3$,
$\lambda_1=1$, and
$\lambda_0<n-1$.
Then 
$\pi L_0\subseteq [M,M]$.
\end{lemma}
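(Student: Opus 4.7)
The plan is to mimic the strategy of Lemma \ref{lcal2d}, exploiting the hypothesis $\lambda_0 < n-1$ to locate a pair of indices strictly above $\lambda$ whose brackets land in $\pi L_0$ and span all of it.

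Concretely, I would set $\eta = (n-1, 1)$ and $\mu = (l, n-1)$ for $0 \leq l < n$. Since $\lambda_1 = 1$ and $\lambda_0 < n-1$, the linear order on $\Lambda$ gives $\eta > \lambda$, and since $\mu_1 = n-1 > 1 = \lambda_1$, also $\mu > \lambda$. Consequently $y_\eta = x_\eta$ and $y_\mu = x_\mu$, so
\[
[y_\eta, y_\mu] = [x_{n-1,1}, x_{l,n-1}] = \bigl(\tau_{n-1}\theta(\tau_l) - \tau_l\theta^{n-1}(\tau_{n-1})\bigr)\Pi^n
= \pi\bigl(\tau_{n-1}\theta(\tau_l) - \tau_l\theta^{-1}(\tau_{n-1})\bigr),
\]
which lies in $[M,M]$.

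Next, I would set
\[
N := \mathrm{span}_{\cl{O}_K}\bigl([x_{n-1,1}, x_{l,n-1}]:\ 0 \leq l < n\bigr) = \pi U,
\]
where $U = \mathrm{span}_{\cl{O}_K}\bigl(\tau_{n-1}\theta(\tau_l) - \tau_l\theta^{-1}(\tau_{n-1}):\ 0 \leq l < n\bigr)$, and prove that $U = \cl{O}_F^0$. This is the heart of the argument, but under the standing assumption $n \geq 3$ (which excludes the excluded case $(p,n) = (2,2)$) the basis from Lemma \ref{lspcbasis} satisfies $\overline{\tau}_{n-1} \not\in \kappa_K$, hence in particular $\overline{\tau}_{n-1} \neq 0$. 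By the mechanism explained in Remark \ref{rdownup}, the reduction of $U$ modulo $\pi$ is the $\kappa_K$-span of $\{\overline{\tau}_{n-1}\vartheta(\overline{\tau}_l) - \overline{\tau}_l\vartheta^{-1}(\overline{\tau}_{n-1}): 0 \leq l < n\}$, and since $(\overline{\tau}_l)_l$ spans $\kappa_F$ over $\kappa_K$ and $\overline{\tau}_{n-1} \neq 0$, Lemma \ref{lcyc0} (applied with $\alpha = \overline{\tau}_{n-1}$) identifies this span with $\kappa_F^0$. Corollary \ref{cgenogenf}(2) then gives $U = \cl{O}_F^0 = L_0$, whence $N = \pi L_0 \subseteq [M,M]$.

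The only genuine subtlety is ensuring that both $\eta$ and $\mu$ can be chosen strictly greater than $\lambda$, so that the deforming correction terms $e_\eta x_\lambda$ disappear and we are computing honest brackets of basis vectors. The inequality $\lambda_0 < n-1$ is precisely what makes $\eta = (n-1,1)$ available above $\lambda$; without it one would be forced to take $\eta$ below $\lambda$ and carry along correction terms, as is done later in more delicate cases (cf.\ Lemma \ref{lcal2c}). Given this choice, the remainder reduces to the standard cyclic Galois computation already packaged in Lemma \ref{lcyc0}.
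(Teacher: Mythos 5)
Your proof is correct and follows essentially the same route as the paper: the paper's own (abbreviated) proof of this lemma takes exactly the indices $\eta=(n-1,1)$ and $\mu=(l,n-1)$ and runs the argument of Lemma \ref{lcal2d}, reducing modulo $\pi$ and invoking Lemma \ref{lcyc0} via Remark \ref{rdownup}, just as you do. Your observation that $\lambda_0<n-1$ is what places $\eta$ strictly above $\lambda$ (so no correction terms appear) is precisely the point of the hypothesis.
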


\begin{proof}
By considering the indices $\eta=(n-1,1)$ and $\mu=(l,n-1)$, for $0\les l<n$,
the proof is similar to the one of Lemma \ref{lcal2d}.
\end{proof}

\begin{lemma}
\label{lcal3d}
Assume that $n\ges 3$,
$\lambda_1=1$, and
$\lambda_0=n-1$.
Then 
$\pi L_0\subseteq [M,M]$.
\end{lemma}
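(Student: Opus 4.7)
The plan is to mimic the strategy of Lemma \ref{lcal2c}: because $\lambda=(n-1,1)$, the natural candidate $(n-1,1)$ for one of the two bracket indices collides with $\lambda$ itself, so I will fix a smaller index $\eta<\lambda$ (for which $y_\eta=x_\eta+e_\eta x_\lambda$ carries an unavoidable correction term) and let the other index range above $\lambda$. The $\bb{Z}/n\bb{Z}$-grading forces the two $\Pi$-exponents to sum to $n$, so one must be $1$ and the other $n-1$. Concretely, I take $\eta=(0,1)$ and $\mu=(l,n-1)$ for $0\les l<n$; since $n\ges 3$, one has $\eta<\lambda<\mu$, so that $y_\mu=x_\mu$ and $y_\eta=(\tau_0+e_{01}\tau_{n-1})\Pi$.

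Setting $\gamma=\tau_0+e_{01}\tau_{n-1}\in\cl{O}_F$, Lemma \ref{lcomm3} yields
\[
[y_\eta,y_\mu]\;=\;\pi\bigl(\gamma\,\theta(\tau_l)-\tau_l\,\theta^{-1}(\gamma)\bigr)\;\in\;[M,M].
\]
Let $N$ be the $\cl{O}_K$-span of these brackets as $l$ varies; then $N=\pi U$ with $U=\mr{span}_{\cl{O}_K}(\gamma\,\theta(\tau_l)-\tau_l\,\theta^{-1}(\gamma):0\les l<n)\subseteq\cl{O}_F^0$. To finish, it is enough to show $U=\cl{O}_F^0$, which by Remark \ref{rdownup} reduces to proving $\mr{pr}(U)=\kappa_F^0$.

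This last step follows from Lemma \ref{lcyc0} applied to the residue extension $\kappa_F/\kappa_K$ (with generator $\bar\theta$ and the fixed element $\bar\gamma$): as $\bar\tau_l$ ranges over the basis $(\bar\tau_i)$ of $\kappa_F$ over $\kappa_K$, the reductions $\bar\gamma\,\bar\theta(\bar\tau_l)-\bar\tau_l\,\bar\theta^{-1}(\bar\gamma)$ span $\kappa_F^0$, \emph{provided} $\bar\gamma\ne 0$. The main obstacle is thus the non-vanishing of $\bar\gamma$, but this is the easy part: $\bar\tau_0$ and $\bar\tau_{n-1}$ are distinct elements of the basis $(\bar\tau_i)_{0\les i<n}$, hence $\kappa_K$-linearly independent, so $\bar\gamma=\bar\tau_0+\bar e_{01}\,\bar\tau_{n-1}\ne 0$ regardless of the value of $\bar e_{01}$. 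This is precisely why the choice $\eta_0=0$ neutralises the correction term, and the lemma follows.
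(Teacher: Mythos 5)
Your proof is correct and follows essentially the same approach as the paper: the paper also takes $\eta=(0,1)$, $\mu=(l,n-1)$, sets $\sigma_0=\tau_0+e_\eta\tau_{n-1}$ (your $\gamma$), notes $\sigma_0\not\equiv_\pi 0$, and then concludes via the argument of Lemma \ref{lcal2d} (i.e., Remark \ref{rdownup} and Lemma \ref{lcyc0}). Your write-up just spells out the final reduction step that the paper delegates to Lemma \ref{lcal2d}.
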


\begin{proof} 
Denote $\eta=(0,1)$ and $\mu=(l,n-1)$, for 
$0\les l<n$,
and observe that 
$\eta<\lambda <\mu$.
Denote $\sigma_0=\tau_0+e_{\eta}\tau_{n-1}$,
and observe that $\sigma_0\not\equiv_\pi  0$.
After noticing that
$
[y_\eta,y_\mu]
=
[x_\eta,x_\mu]
+
e_\eta
[x_\lambda,x_\mu]
=
\pi (\sigma_0\theta(\tau_l)-\tau_l\theta^{-1}(\sigma_0))
$,
the proof proceeds as in the case of Lemma \ref{lcal2d}. 
\end{proof}

\begin{lemma}
\label{lcal0}
Assume that $\lambda_1\ges 1$.
Let $k$ and $m$ be integers such that $k\ges 0$ and $1\les m < n$,
and assume that $(\lambda_1,m)\neq (1,1)$.
Assume that $\pi^{k+1}L\subseteq [M,M]$
and $\pi^{k}L_j\subseteq [M,M]$ for $m<j<n$.
Then $\pi^k L_m\subseteq [M,M]$.
\end{lemma}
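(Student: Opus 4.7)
I plan to exhibit enough brackets of elements of $M$ whose components modulo $\pi^{k+1}L+\sum_{m<j<n}\pi^k L_j\subseteq [M,M]$ span $\pi^k L_m$. The natural starting point is to pick $\eta,\mu\in\Lambda$ with $\eta_1+\mu_1\equiv m\pmod n$, compute $[y_\eta,y_\mu]$, and discard the components lying in the subspaces provided by the inductive hypothesis. The power $\pi^k$ is to be produced by combining the wraparound identity $\Pi^n=\pi$, the relation $y_\lambda=\pi x_\lambda$, and the fact that $[M,M]$ is an $\cl{O}_K$-submodule, so that multiplication by $\pi^{k-1}\in\cl{O}_K$ preserves $[M,M]$.

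In the principal case I would take both $\eta>\lambda$ and $\mu>\lambda$, so that $y_\eta=x_\eta$ and $y_\mu=x_\mu$ and the bracket becomes
\[
[x_\eta,x_\mu]=\bigl(\tau_{\eta_0}\theta^{\eta_1}(\tau_{\mu_0})-\tau_{\mu_0}\theta^{\mu_1}(\tau_{\eta_0})\bigr)\Pi^{\eta_1+\mu_1}.
\]
Since $\lambda_1\ges 1$ and $(\lambda_1,m)\neq(1,1)$, one can select $\eta_1,\mu_1\ges \lambda_1+1$ (or, where forced, $\eta_1=\lambda_1$ together with $\eta_0>\lambda_0$) with $\eta_1+\mu_1\in\{m,m+n\}$ and $\eta_1$ coprime to $n$; then $\vartheta:=\theta^{\eta_1}$ still generates $\mr{Gal}(F/K)$ and the scalar factor takes the form $\alpha\vartheta(\beta)-\beta\vartheta^{k'}(\alpha)$ with $k'+1\equiv \eta_1+\mu_1\equiv m\pmod n$, hence non-zero modulo $n$. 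Lemma \ref{lcyc2} together with Remark \ref{rdownup} then yield that the $\cl{O}_K$-span of the scalar coefficients as $\eta_0,\mu_0$ vary fills $\cl{O}_F$. Multiplying the resulting brackets through by $\pi^k$, $\pi^{k-1}$, or $\pi^{k-2}\in\cl{O}_K$ according to how many factors of $\pi$ already emerge from $\Pi^{\eta_1+\mu_1}$ deposits the whole of $\pi^k L_m$ inside $[M,M]$.

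When $\lambda$ sits too close to the top of $\Lambda$ for the clean choice $\eta,\mu>\lambda$ to be feasible (notably when $\lambda_0=n-1$), I would mimic Lemma \ref{lcal2c} and take $\eta<\lambda$, relying on the fact that the combination $\xi_{\eta_0}+e_\eta\xi_{n-1}$ is a unit modulo $\pi$ in $\cl{O}_F$ (cf.\ Remark \ref{rxisigma}); alternatively, taking $\eta=\lambda$ and $\mu>\lambda$ gives $[y_\lambda,y_\mu]=\pi[x_\lambda,x_\mu]$, which already carries one factor of $\pi$, after which Lemma \ref{lcyc1} or Lemma \ref{lcyc0} controls the residue-field span. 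The main obstacle will be the bookkeeping of this case analysis, matching each configuration of $(\lambda_0,\lambda_1,m)$ to a specific index choice and verifying that the residue-field span computation avoids the degenerate regime $k'+1\equiv 0\pmod n$ of Lemma \ref{lcyc2}; the excluded pair $(\lambda_1,m)=(1,1)$ is precisely the one in which the constraint $\eta_1+\mu_1\equiv 1\pmod n$ with both $\eta_1,\mu_1\ges 2$ collapses the relabelled parameter to the degenerate value, and a separate argument would be needed.
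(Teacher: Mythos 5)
There is a genuine gap in your proposed approach. Your ``principal case'' takes both $\eta>\lambda$ and $\mu>\lambda$, which forces $\eta_1,\mu_1\ges\lambda_1$. When $\lambda_1$ is large relative to $m$ this choice is simply unavailable: for instance, with $n=5$, $\lambda_1=4$, $m=1$ you would need $\eta_1+\mu_1\equiv 1\pmod 5$ with $\eta_1=\mu_1=4$ (both second coordinates are forced to be $4$), giving $8\equiv 3\pmod 5$, so no admissible pair $(\eta,\mu)$ exists. Moreover, even when your principal case is available, any choice with $\eta_1+\mu_1=m+n$ produces a bracket $[x_\eta,x_\mu]$ already lying in $\pi L_m$ rather than $L_m$. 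For $k=0$ this is fatal: you cannot multiply by $\pi^{-1}$, so you only reach $\pi L_m\subseteq[M,M]$, not $L_m\subseteq[M,M]$. Your remark about ``multiplying by $\pi^{k-1}$ or $\pi^{k-2}$'' silently assumes $k\ges 1$ or $k\ges 2$, but the lemma is needed (and used in Corollaries~\ref{cl01} and~\ref{cl2n}) at $k=0$. Your fallback $\eta=\lambda$ has the same problem: $[y_\lambda,y_\mu]=\pi[x_\lambda,x_\mu]$ introduces a gratuitous factor of $\pi$.

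The paper's proof goes the opposite way: it always chooses $\eta$ with a \emph{small} second coordinate ($\eta_1=0$ in cases $\lambda_1\neq m$, $\eta_1=1$ in case $\lambda_1=m\ges 2$), which, since $\lambda_1\ges 1$, puts $\eta<\lambda$ and keeps $\eta_1+\mu_1=m$ exactly, avoiding any wraparound and hence any loss of a power of $\pi$. The price is that $y_\eta=x_\eta+e_\eta x_\lambda$ (and possibly $y_\mu=x_\mu+e_\mu x_\lambda$) carries correction terms, but these are exactly the point of the inductive hypotheses: $[x_\lambda,x_\mu]$ and $[x_\eta,x_\lambda]$ land either in $L_j$ with $j>m$ (covered by $\pi^kL_j\subseteq[M,M]$) or in $\pi L$ (covered by $\pi^{k+1}L\subseteq[M,M]$). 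Once the corrections are absorbed, $\pi^k[x_\eta,x_\mu]\in[M,M]$ and the residue-field span is handled by Lemma~\ref{ltaneqa} (not Lemma~\ref{lcyc2}) in the cases $\lambda_1\neq m$, since the scalar factor is $(\tau_i-\theta^m(\tau_i))\tau_l$, a product of a trace-type difference with an arbitrary element. Your reliance on the generator-change trick and Lemma~\ref{lcyc2} with coprimality of $\eta_1$ and $n$ is a plausible idea but is not what is needed once you adopt the paper's index choice, and it is not sufficient on its own because it does not resolve the availability and $k=0$ issues above.
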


\begin{proof2}
Observe that we have $n\ges 3$.
We divide the proof into three cases.
\begin{enumerate}
\item
\label{lcal01}
Case $\lambda_1<m$. 
Denote $\eta=(i,0)$ and $\mu=(l,m)$,
for $1\les i <n$ and $0\les l<n$.
Observe that $\eta<\lambda<\mu$, so
that 
$
[y_\eta,y_\mu]
=
[x_\eta,x_\mu]
+
e_\eta
[x_\lambda,x_\mu]
$.
We also have 
$
[x_\lambda,x_\mu]\in L_{\lambda_1+m}$
when
$\lambda_1+m<n$,
and 
$
[x_\lambda,x_\mu]\in \pi L$
when
$\lambda_1+m\ges n$.
From the assumptions, 
since $\lambda_1+m > m$,
one can deduce that
$
\pi^k
[x_\eta,x_\mu]
\in [M,M]
$.
Observe that $[x_\eta,x_\mu]=[x_{i0},x_{lm}]=(\tau_i-\theta^m(\tau_i))\tau_l\Pi^m$.
We will show that $N_1:=\mr{span}_{\cl{O}_K}([x_{i0},x_{lm}]:\,1\les i<n\mbox{ and } 0\les l<n)$
is equal to $L_m$, from which $\pi^kL_m\subseteq [M,M]$ follows.
We have $N_1=U_1\Pi^m$, where 
$U_1=
\mr{span}_{\cl{O}_K}([\tau_i-\theta^m(\tau_i)]\tau_l:  \,1\les i<n\mbox{ and } 0\les l<n)$.
Now the desired result follows from Remark \ref{rdownup} and Lemma \ref{ltaneqa}.

\item
\label{lcal02}
Case $\lambda_1> m$. 
Denote $\eta=(i,0)$ and $\mu=(l,m)$,
for $1\les i <n$ and $0\les l<n$.
Observe that $\eta, \mu <\lambda$, 
so
that 
$
[y_\eta,y_\mu]
=
[x_\eta,x_\mu]
+
e_\eta
[x_\lambda,x_\mu]
+
e_\mu[x_\eta,x_\lambda]
$.
We have 
$
[x_\lambda,x_\mu]\in L_{\lambda_1+m}$
when
$\lambda_1+m<n$,
and 
$
[x_\lambda,x_\mu]\in \pi L$
when 
$\lambda_1+m\ges n$;
also,
$
[x_\eta, x_\lambda]\in L_{\lambda_1}$.
From the assumptions, 
since $\lambda_1+m>\lambda_1 > m$,
one can deduce that
$
\pi^k
[x_\eta,x_\mu]
\in [M,M]
$.
From here, the argument for this case is completed in exactly the same 
way as for case (\ref{lcal01}).

\item 
\label{lcal03}
Case $\lambda_1= m$. Observe that $m\ges 2$. 

Denote $\eta=(i,1)$ and $\mu=(l,m-1)$,
for $0\les i <n$ and $0\les l<n$.
Observe that $\eta, \mu <\lambda$, 
so
that 
$
[y_\eta,y_\mu]
=
[x_\eta,x_\mu]
+
e_\eta
[x_\lambda,x_\mu]
+
e_\mu[x_\eta,x_\lambda]
$.
We have 
$
[x_\lambda,x_\mu]\in L_{2m-1}$
when
$2m-1<n$,
and 
$
[x_\lambda,x_\mu]\in \pi L$
when 
$2m-1\ges n$;
also,
$
[x_\eta, x_\lambda]\in L_{m+1}$
when $m+1<n$, and 
$
[x_\eta, x_\lambda]\in \pi L$
when $m+1\ges n$. 
From the assumptions, 
since $2m-1\ges  m+1 > m$,
one can deduce that
$
\pi^k
[x_\eta,x_\mu]
\in [M,M]
$.
After noticing that
$[x_\eta,x_\mu]=[x_{i1},x_{l,m-1}]=
(\tau_{i}\theta(\tau_{l})-\tau_{l}\theta^{m-1}(\tau_{i}))\Pi^m$,
the proof proceeds as in the case of Lemma \ref{lcal2}.\ep 
\end{enumerate}
\end{proof2}

\begin{lemma}
\label{lcal1}
Assume that $n\ges 3$, $(p,n)\neq (3,3)$,
and $\lambda_1 = 1$.
Let $k$ be an integer such that $k\ges 0$,
and assume that 
$\pi^{k}L_2\subseteq [M,M]$.
Then $\pi^k L_1\subseteq [M,M]$.
\end{lemma}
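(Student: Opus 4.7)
The plan is to combine two bracket strategies: an unconditional \emph{wraparound} argument that settles the case $k\ges 1$, and a \emph{non-wraparound} argument, powered by the hypothesis $\pi^kL_2\subseteq[M,M]$, that handles the remaining case $k=0$.

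For the wraparound step I would consider $[y_\eta, y_\mu]=[x_\eta,x_\mu]$ with $\eta=(i,n-1)$ and $\mu=(l,2)$, for $0\les i,l<n$; since $\lambda_1=1$ and $n\ges 3$, both indices exceed $\lambda$, so the bracket is directly in $[M,M]$. The $\Pi$-degrees sum to $n+1$, so each such bracket lies in $\pi L_1$ and equals $\pi(\tau_i\theta^{n-1}(\tau_l)-\tau_l\theta^2(\tau_i))\Pi$. As $\gcd(n-1,n)=1$, the residue automorphism $\vartheta^{n-1}$ is a generator of $\mr{Gal}(\kappa_F/\kappa_K)$, and a direct application of Lemma~\ref{lcyc2} shows that the coefficient expressions reduce modulo $\pi$ to a $\kappa_K$-spanning set of $\kappa_F$. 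Combining Remark~\ref{rdownup} with Corollary~\ref{cgenogenf} yields $\pi L_1\subseteq [M,M]$, which already proves the lemma whenever $k\ges 1$.

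For the case $k=0$ one must strengthen $\pi L_1\subseteq [M,M]$ to $L_1\subseteq[M,M]$; equivalently, the image of $L_1\cap[M,M]$ in $L_1/\pi L_1\simeq \kappa_F$ must be all of $\kappa_F$. I would produce such image vectors from the non-wraparound brackets $[y_\eta,y_\mu]$ with $\eta_1+\mu_1=1$, modding out the unique $L_2$-term they produce (absorbed by the hypothesis). The identities of Lemma~\ref{lcomm3} then yield $\tau_a\xi_b\Pi\in [M,M]$ for $a>\lambda_0$, $b\ges 1$ (from indices where exactly one of $\eta,\mu$ exceeds $\lambda$), together with mixed elements $(\tau_i+e_\eta\tau_{\lambda_0})\xi_l\Pi\in [M,M]$ for $i<\lambda_0$, $l\ges 1$ (from indices both below $\lambda$), plus the analogous transposed contributions.

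It remains to verify that these coefficient-factors $\kappa_K$-span $\kappa_F$ modulo $\pi$. Setting $V_\xi:=\mr{span}_{\kappa_K}(\overline{\xi}_b:b\ges 1)$ and letting $W$ denote the $\kappa_K$-span of the $\overline{\tau}$-factors, the goal reduces to $W\cdot V_\xi=\kappa_F$. For $\lambda_0\les n-3$ one picks two indices $a,a'>\lambda_0$ with linearly independent $\overline{\tau}_a,\overline{\tau}_{a'}$ and applies Remark~\ref{rcyc2}; for $\lambda_0\in\{n-2,n-1\}$ the mixed contributions are essential, and two choices of $i<\lambda_0$ supplying coefficient vectors linearly independent over $\kappa_K$ (using $\overline{\tau}_0=1$ when $p\nmid n$ or $\overline{\tau}_1=1$ when $p\mid n$) again allow Remark~\ref{rcyc2} to conclude. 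The main obstacle, and the precise reason for the exclusion $(p,n)\neq(3,3)$, is controlling $V_\xi$ when $p\mid n$: there $\overline{\xi}_1=0$, so $V_\xi$ has codimension one in $\kappa_F^0$, and in the Artin--Schreier case $(3,3)$ the space $V_\xi$ collapses to $\kappa_K$ itself, preventing $W\cdot V_\xi$ from ever exceeding $\kappa_F^0$; for every other $(p,n)$ with $p\mid n$ one has $\dim V_\xi\ges 2$, giving room for the same spanning argument. Assembling everything yields $L_1\subseteq [M,M]+\pi L_1=[M,M]$, as required.
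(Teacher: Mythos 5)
The wraparound argument ($\eta=(i,n-1)$, $\mu=(l,2)$, degree sum $n+1$) is valid and is a genuinely different observation from the paper's: since $\eta,\mu>\lambda$ whenever $\lambda_1=1$ and $n\ges 3$, the brackets $[x_\eta,x_\mu]=\pi(\tau_i\theta^{n-1}(\tau_l)-\tau_l\theta^2(\tau_i))\Pi$ lie in $[M,M]$, and Lemma~\ref{lcyc2} applied with the generator $\vartheta^{n-1}$ and $k=n-2$ (the condition $k+1\not\equiv_n 0$ holds) gives the spanning. This shows $\pi L_1\subseteq[M,M]$ \emph{unconditionally}, so the hypothesis $\pi^kL_2\subseteq[M,M]$ is only ever needed when $k=0$. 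The paper does not split on $k$; it handles all $k$ uniformly with a single choice $\eta=(i,0)$, $\mu=(l,1)$ and absorbs the $L_2$-term using the hypothesis, which is the argument you reproduce for $k=0$.

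However, the spanning step of your $k=0$ argument has a genuine gap. You set up the goal $W\cdot V_\xi=\kappa_F$ correctly and identify $V_\xi=\mr{span}(\overline{\xi}_i:\,1\les i<n)$, which has dimension $n-1$ when $p\nmid n$ and $n-2$ when $p\mid n$. But you then apply Remark~\ref{rcyc2} by choosing two linearly independent $\tau$-type vectors and, implicitly, taking $V=V_\xi$; this requires $\dim V_\xi=n-1$, which fails whenever $p\mid n$, precisely the delicate regime. Moreover, the case split on $\lambda_0$ is incomplete: when $n=3$ and $\lambda_0=n-2=1$, the pure branch has only one index $a>\lambda_0$ and the mixed branch only one index $i<\lambda_0$, so neither branch yields two vectors. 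The paper avoids both problems by observing that the $\overline{\sigma}_l$ for $l\neq\lambda_0$ are always linearly independent (a unipotent modification of the basis $(\overline{\tau}_l)$), so $W$ always has dimension $n-1$; one then picks \emph{two} linearly independent $\overline{\xi}_i$ (available by Remark~\ref{rxisigma} exactly because $n\ges 3$ and $(p,n)\neq(3,3)$) and applies Remark~\ref{rcyc2} with $V=W$. This needs no case split on $\lambda_0$. You do gesture at this correct orientation in the final sentence ("$\dim V_\xi\ges 2$, giving room for the same spanning argument"), but as written the preceding case analysis asks rcyc2 to do something it cannot do for $p\mid n$. Your identification $V_\xi=\kappa_K$ in the excluded case $(p,n)=(3,3)$ is correct (a nice consequence of $\tau_2\in F^0$ and $2\equiv -1\pmod 3$), but the proof of the included cases still needs the rcyc2 application to be set up with $V=W$.
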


\begin{proof2}
Denote $\eta=(i,0)$ and $\mu=(l,1)$,
for $1\les i <n$, $0\les l<n$,
and $l\neq \lambda_0$.
Observe that $\eta<\lambda$;
also, 
$\lambda<\mu$ when $l>\lambda_0$,
and
$\mu<\lambda$ when $l<\lambda_0$.
Define 
$
z_{il}= 
[x_{i0},x_{l1}]
$
when $l>\lambda_0$,
and 
$
z_{il}= 
[x_{i0},x_{l1}]
+
e_l
[x_{i0},x_\lambda]
$
when $l<\lambda_0$,
where $e_l=e_{l1}$. 
Hence,
$
[y_\eta,y_\mu]
=
z_{il}
+
e_\eta
[x_\lambda,x_\mu]
$.
We have 
$
[x_\lambda,x_\mu]\in L_{2}$.
From the assumptions
one can deduce that
$
\pi^k
z_{il}
\in [M,M]
$.
For $1\les i <n$ 
we define 
$\xi_i=\tau_i-\theta(\tau_i)$, while for 
$0\les l < n$ and $l\neq \lambda_0$
we define 
$\sigma_l = \tau_l$ when $l>\lambda_0$,
and
$\sigma_l = \tau_l+e_l\tau_{\lambda_0}$ when $l<\lambda_0$.
Observe that
$z_{il}=
\xi_i\sigma_l\Pi$.  
We will show that 
$N:=\mr{span}_{\cl{O}_K}(z_{il}:\,1\les i<n, \,0\les l<n, \mbox{ and } l\neq\lambda_0)$
is equal to $L_1$, from which $\pi^kL_1\subseteq [M,M]$ follows.
We have $N=U\Pi$, where 
$U=
\mr{span}_{\cl{O}_K}(\xi_i\sigma_l:  \,1\les i<n,\, 0\les l<n, \mbox{ and } l\neq\lambda_0)$.
Now we want to apply Remark \ref{rdownup}.
In order to do so, we observe that $(\overline{\sigma}_l)_{l\neq \lambda_0}$ spans
a subspace of dimension $n-1$ over $\kappa_K$. 
Hence, the desired result follows from Remark \ref{rxisigma} and Remark \ref{rcyc2}.

\ep
\end{proof2}

\begin{lemma}
\label{lcal3a}
Assume that 
$2\les \lambda_1\les n-2$,
and let $k$ be an integer such that $k\ges 1$.
Assume that $\pi^{k}L_j\subseteq [M,M]$ for $0<j<n$.
Then 
$\pi^kL_0\subseteq [M,M]$.
\end{lemma}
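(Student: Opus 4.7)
The plan is to extract elements of $\pi^k L_0$ from commutators $[y_\eta, y_\mu]$ with $\eta = (i,1)$ and $\mu = (l,n-1)$, where $i$ and $l$ each range over $\{0,1,\dots,n-1\}$. Note that the hypothesis $2 \les \lambda_1 \les n-2$ forces $n \ges 4$ and delivers the key order relations $\eta < \lambda < \mu$ in $\Lambda$, since $\eta_1 = 1 < \lambda_1$ and $\mu_1 = n-1 > \lambda_1$. Consequently
\[
[y_\eta, y_\mu] \;=\; [x_\eta, x_\mu] + e_\eta [x_\lambda, x_\mu].
\]

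The two brackets on the right are analyzed separately. Lemma \ref{lcomm3} together with $\Pi^n = \pi$ gives
$[x_\eta, x_\mu] = \pi(\tau_i \theta(\tau_l) - \tau_l \theta^{-1}(\tau_i))$, so this contribution already sits in $\pi L_0$. For the second, $[x_\lambda, x_\mu]$ has graded degree $\lambda_1 + (n-1) \equiv_n \lambda_1 - 1$, and since $\lambda_1 + (n-1) \ges n+1$ we actually have $[x_\lambda, x_\mu] \in \pi L_{\lambda_1-1}$, where the index satisfies $1 \les \lambda_1 - 1 \les n-3$.

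Exploiting $k \ges 1$, I would consider $\pi^{k-1}[y_\eta, y_\mu] \in [M,M]$. The term $e_\eta \pi^{k-1} [x_\lambda, x_\mu]$ lies in $\pi^k L_{\lambda_1 - 1}$, which is contained in $[M,M]$ by the standing hypothesis since $\lambda_1 - 1 \ges 1$; hence it can be discarded. What survives is
\[
\pi^k(\tau_i \theta(\tau_l) - \tau_l \theta^{-1}(\tau_i)) \,\in\, [M,M]
\]
for all $0 \les i,l < n$. Setting
$U := \mr{span}_{\cl{O}_K}(\tau_i \theta(\tau_l) - \tau_l \theta^{-1}(\tau_i) : 0 \les i,l < n)$, which lies in $\cl{O}_F^0$ by $\theta$-invariance of the trace, reduces the lemma to proving $U = \cl{O}_F^0 = L_0$.

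For this last step I would follow the template of Remark \ref{rdownup}: it suffices to check $\mr{pr}(U) = \kappa_F^0$, and then Corollary \ref{cgenogenf} promotes this to $U = \cl{O}_F^0$. The residue-level statement is precisely a $\kappa_K$-linear consequence of Lemma \ref{lcyc0}, applied with $\alpha = \overline{\tau}_{i_0}$ for any index $i_0$ with $\overline{\tau}_{i_0} \neq 0$: as $\beta$ ranges over the basis $(\overline{\tau}_l)_{0 \les l < n}$ of $\kappa_F$ over $\kappa_K$, the elements $\overline{\tau}_{i_0}\vartheta(\beta) - \beta\vartheta^{-1}(\overline{\tau}_{i_0})$ span $\kappa_F^0$. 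The only technical care needed is ensuring that $[x_\lambda, x_\mu]$ can actually be absorbed by the inductive hypothesis, which is guaranteed by the simultaneous constraints $\lambda_1 \ges 2$ (so that $\lambda_1 - 1 \ges 1$) and $k \ges 1$ (so that the extra factor of $\pi$ coming from $\Pi^{\lambda_1 + n - 1}$ is available).
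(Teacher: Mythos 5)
Your proof is correct and follows essentially the same route as the paper: both choose $\eta$ at grading level $1$ and $\mu$ at level $n-1$, use $\eta<\lambda<\mu$ to write $[y_\eta,y_\mu]=[x_\eta,x_\mu]+e_\eta[x_\lambda,x_\mu]$, absorb the second term via the hypothesis on $\pi^kL_{\lambda_1-1}$, and conclude via Remark~\ref{rdownup} together with Lemma~\ref{lcyc0}. The only cosmetic difference is that you let $i$ range over all of $\{0,\dots,n-1\}$, whereas the paper fixes $\eta=(0,1)$ and only varies $l$, which already suffices.
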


\begin{proof}
Observe that $n\ges 4$. 
Denote $\eta=(0,1)$ and $\mu=(l,n-1)$,
for $0\les l<n$.
Observe that $\eta<\lambda<\mu$, so
that 
$
[y_\eta,y_\mu]
=
[x_\eta,x_\mu]
+
e_\eta
[x_\lambda,x_\mu]
$.
Since $n<\lambda_1+n-1<2n$,
we have 
$
[x_\lambda,x_\mu]
\in 
\pi L_{\lambda_1-1}
$.
From the assumptions,
since $0<\lambda_1-1<n$,
one can deduce that
$
\pi^{k-1}
[x_\eta,x_\mu]
\in [M,M]
$.
After noticing that
$[x_\eta,x_\mu]=[x_{01},x_{l,n-1}]=
\pi (\tau_{0}\theta(\tau_{l})-\tau_{l}\theta^{-1}(\tau_{0}))$,
the proof proceeds as in the case of Lemma \ref{lcal2d}.
\end{proof}

\begin{lemma}
\label{lcal3ce}
Assume that $n\ges 3$ and
$\lambda_1 = n-1$.
Let $k$ be an integer such that $k\ges 1$,
and assume that $\pi^{k}L_j\subseteq [M,M]$ for $0<j<n$.
Then 
$\pi^kL_0\subseteq [M,M]$.
\end{lemma}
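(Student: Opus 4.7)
The plan is to mirror the strategy of Lemma \ref{lcal3d} (the analogous $\lambda_1=1$, $\lambda_0=n-1$ case): choose indices $\eta,\mu$ so that $[y_\eta,y_\mu]$ lies in $\pi L_0$ up to terms in some $\pi L_j$ with $j>0$ (which the hypothesis lets us absorb into $[M,M]$), and then vary one of the indices and invoke the residue-field version of Lemma \ref{lcyc0} to span all of $\pi^k L_0$.

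Concretely, I would pick any $\eta_0 \in \{0,\dots,n-1\} \setminus \{\lambda_0\}$ (which exists since $n \ges 2$) and set $\eta = (\eta_0, n-1)$ and $\mu = (l, 1)$ for $0 \les l < n$. Since $\mu_1 = 1 < n-1 = \lambda_1$, one has $\mu < \lambda$ in every instance, while $\eta < \lambda$ when $\eta_0 < \lambda_0$ and $\eta > \lambda$ when $\eta_0 > \lambda_0$. Expanding $[y_\eta, y_\mu]$ using the definitions of the $y$'s and applying Lemma \ref{lcomm3}, the three bracket terms that arise satisfy $[x_\eta, x_\mu] \in \pi L_0$, $[x_\lambda, x_\mu] \in \pi L_0$ (both index sums equal $n$), and $[x_\eta, x_\lambda] \in \pi L_{n-2}$ (since $\eta_1 + \lambda_1 = 2n-2$). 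Because $n \ges 3$ gives $n-2 \ges 1$, multiplying by $\pi^{k-1}$ allows the hypothesis $\pi^k L_{n-2} \subseteq [M,M]$ to absorb the $[x_\eta, x_\lambda]$-contribution, leaving
\[
\pi^k \bigl( \sigma\, \theta^{-1}(\tau_l) - \tau_l\, \theta(\sigma) \bigr) \in [M,M]
\]
for every $0 \les l < n$, with $\sigma = \tau_{\eta_0}$ if $\eta_0 > \lambda_0$ and $\sigma = \tau_{\eta_0} + e_\eta \tau_{\lambda_0}$ if $\eta_0 < \lambda_0$.

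In both sub-cases $\overline{\sigma} \neq 0$ in $\kappa_F$: directly in the first, since $\overline{\tau_{\eta_0}}$ is a basis element; and in the second because $\overline{\tau_{\eta_0}}$ and $\overline{\tau_{\lambda_0}}$ are distinct basis elements of $\kappa_F$ over $\kappa_K$, so their nontrivial $\kappa_K$-combination is nonzero. Lemma \ref{lcyc0} applied in the residue extension $\kappa_F / \kappa_K$ with the generator $\vartheta^{-1}$ (see Remark \ref{rcyc0}) then says that, as $\tau_l$ varies over the basis $(\tau_l)_{0 \les l < n}$, the image spans $\kappa_F^0$ over $\kappa_K$. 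Combining this with Remark \ref{rdownup} and Corollary \ref{cgenogenf}(2) upgrades the conclusion to a genuine $\cl{O}_K$-spanning of $\cl{O}_F^0 = L_0$, yielding $\pi^k L_0 \subseteq [M, M]$.

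The main obstacle is the sub-case $\eta_0 < \lambda_0$: there the extra term $e_\eta [x_\lambda, x_\mu]$ itself lies in $\pi L_0$ rather than in some $\pi L_j$ with $j > 0$, so it cannot be absorbed via the hypothesis. The key observation is that this term instead recombines with $[x_\eta, x_\mu]$ into a single expression of Lemma \ref{lcyc0}-type by replacing the coefficient $\tau_{\eta_0}$ with the modified element $\sigma = \tau_{\eta_0} + e_\eta \tau_{\lambda_0}$, which remains nonzero modulo $\pi$ and thus still drives the spanning argument.
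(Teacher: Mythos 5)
Your proof is correct and follows essentially the same route as the paper's: fix the degree-$(n-1)$ factor with $\cl{O}_F$-index different from $\lambda_0$, vary the degree-$1$ factor, absorb the cross term lying in $\pi L_{n-2}$ using the hypothesis, fold the unabsorbable $\pi L_0$ term into a modified element $\sigma$ that is nonzero modulo $\pi$, and conclude via Remark \ref{rcyc0} together with Remark \ref{rdownup}. The only difference is cosmetic: the paper splits on $\lambda_0<n-1$ versus $\lambda_0=n-1$ with the fixed index taken to be $n-1$ or $0$, whereas you allow any fixed index $\eta_0\neq\lambda_0$ and split on $\eta_0$ versus $\lambda_0$.
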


\begin{proof2}
We divide the proof into two parts.
\begin{enumerate}
\item 
\label{lcal3ce.1}

Case $\lambda_0<n-1$.
Denote $\eta=(i,1)$ and $\mu=(n-1,n-1)$,
for $0\les i <n$.
Observe that $\eta<\lambda<\mu$, so
that 
$
[y_\eta,y_\mu]
=
[x_\eta,x_\mu]
+
e_\eta
[x_\lambda,x_\mu]
$.
Since $\lambda_1+\mu_1=2n-2$ and
in particular $n<\lambda_1+\mu_1<2n$,
we have 
$
[x_\lambda,x_\mu]
\in 
\pi L_{n-2}
$.
From the assumptions,
since $0<n-2<n$,
one can deduce that
$
\pi^{k-1}
[x_\eta,x_\mu]
\in [M,M]
$.
Observe that $[x_\eta,x_\mu]=[x_{i1},x_{n-1,n-1}]=
\pi (\tau_{i}\theta(\tau_{n-1})-\tau_{n-1}\theta^{-1}(\tau_{i}))$.
We will show that $N:=\mr{span}_{\cl{O}_K}([x_{i1},x_{n-1,n-1}]:\,0\les i<n)$
is equal to $\pi L_0$, from which $\pi^kL_0\subseteq [M,M]$ follows.
We have $N=\pi U$, where 
$U=
\mr{span}_{\cl{O}_K}(\tau_{i}\theta(\tau_{n-1})-\tau_{n-1}\theta^{-1}(\tau_{i}):  \,0\les i <n)$.
Now the desired result follows from Remark \ref{rdownup} and Remark \ref{rcyc0}.

\item 
Case $\lambda_0=n-1$.
Denote $\eta=(i,1)$ and $\mu=(0,n-1)$,
for $0\les i <n$.
Observe that $\eta<\lambda$ and $\mu<\lambda$, so
that 
$
[y_\eta,y_\mu]
=
[x_\eta,x_\mu]
+
e_\eta
[x_\lambda,x_\mu]
+
e_\mu[x_\eta,x_\lambda]
$.
Since $\lambda_1+\mu_1=2n-2$ and
in particular $n<\lambda_1+\mu_1<2n$,
we have 
$
[x_\lambda,x_\mu]
\in 
\pi L_{n-2}
$.
Define $z_i= 
[x_\eta,x_\mu]
+
e_\mu[x_\eta,x_\lambda]
$
for $0\les i <n$,
and 
define 
$\sigma_0=\tau_0+e_{\mu}\tau_{n-1}$.
Observe that $\sigma_0\not\equiv_\pi  0$.
From the assumptions,
since $0<n-2<n$,
one can deduce that
$
\pi^{k-1}z_i\in [M,M]
$.
After noticing that
$
z_i
=
\pi (\tau_i\theta(\sigma_0)-\sigma_0\theta^{-1}(\tau_i))
$,
the proof proceeds as in the case of item (\ref{lcal3ce.1}).\ep 
\end{enumerate}
\end{proof2}

\begin{corollary}
\label{cl01}
Assume that $n\ges 3$,
$(p,n)\neq (3,3)$,
 and
$\lambda_1 \les 1$.
Then 
$[M,M]=[L,L]$.
\end{corollary}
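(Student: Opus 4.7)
The plan is to split the argument according to whether $\lambda_1 = 0$ or $\lambda_1 = 1$. By Remark \ref{rp2llmm} combined with Lemma \ref{lLL} (applicable since $n \ges 3$ implies $(p,n)\neq(2,2)$), in both cases it suffices to establish $\pi L_0 \subseteq [M,M]$ together with $L_m \subseteq [M,M]$ for every $1 \les m < n$.

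First I would treat the case $\lambda_1 = 0$, which amounts to a direct assembly of the preparatory lemmas. Lemma \ref{lcal2} immediately gives $L_m \subseteq [M,M]$ for all $2 \les m < n$, and Lemma \ref{lcal2d} gives $\pi L_0 \subseteq [M,M]$. For $L_1$, since $\lambda \in \Lambda$ and $\lambda_1 = 0$ force $\lambda_0 \ges 1$, I would split according to whether $\lambda_0 < n-1$, invoking Lemma \ref{lcal2b}, or $\lambda_0 = n-1$, invoking Lemma \ref{lcal2c}; the latter is the only step in this case where the assumption $(p,n) \neq (3,3)$ is used.

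The case $\lambda_1 = 1$ is more delicate, since Lemma \ref{lcal0} explicitly excludes $(\lambda_1, m) = (1,1)$ and requires bootstrap hypotheses involving powers of $\pi$. The plan is a two-step descent in the $\pi$-power. Starting from the automatic inclusion $\pi^2[L,L] \subseteq [M,M]$ of Remark \ref{rp2llmm}, and combining it with the direct conclusion $\pi L_0 \subseteq [M,M]$ obtained from Lemma \ref{lcal3b} (if $\lambda_0 < n-1$) or Lemma \ref{lcal3d} (if $\lambda_0 = n-1$), I would first secure $\pi^2 L \subseteq [M,M]$. Then I would apply Lemma \ref{lcal0} with $k=1$, descending $m$ from $n-1$ down to $2$, to get $\pi L_m \subseteq [M,M]$ for each such $m$; the omitted case $m=1$ is then recovered from Lemma \ref{lcal1}, producing $\pi L \subseteq [M,M]$. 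Repeating exactly the same descent with $k=0$ yields $L_m \subseteq [M,M]$ for every $1 \les m < n$, and the conclusion follows.

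The main obstacle is the bookkeeping in the case $\lambda_1 = 1$: one must carefully maintain the hypothesis $\pi^{k+1} L \subseteq [M,M]$ throughout the descent on $m$, and separately secure the degree-$0$ component $\pi L_0 \subseteq [M,M]$ by a non-inductive argument, since Lemma \ref{lcal0} only propagates among the components $L_m$ with $m \ges 1$. Both cases rely on the assumption $(p,n) \neq (3,3)$: in the case $\lambda_1 = 0$ through Lemma \ref{lcal2c}, and in the case $\lambda_1 = 1$ through Lemma \ref{lcal1}.
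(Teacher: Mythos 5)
Your proposal is correct and follows essentially the same route as the paper: the $\lambda_1=0$ case is assembled from Lemmas \ref{lcal2}, \ref{lcal2b}, \ref{lcal2c}, \ref{lcal2d} with Remark \ref{rp2llmm}, and the $\lambda_1=1$ case uses Lemmas \ref{lcal3b}/\ref{lcal3d} for $\pi L_0$ followed by the descent via Lemmas \ref{lcal0} and \ref{lcal1}. The only difference is cosmetic: the paper states the descent as a general descending induction on $k$ (reducing $\pi^{k+1}[L,L]\subseteq[M,M]$ to $\pi^{k}[L,L]\subseteq[M,M]$), whereas you unroll it into the two explicit rounds $k=1$ and $k=0$, which is equivalent since the induction starts at $\pi^{2}[L,L]\subseteq[M,M]$.
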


\begin{proof2}
We divide the proof into two parts.
\begin{enumerate}
\item 
Case $\lambda_1=0$.
The result follows from
Remark \ref{rp2llmm} and Lemmas \ref{lcal2}, \ref{lcal2b}, \ref{lcal2c}, 
and \ref{lcal2d}.

\item 
Case $\lambda_1=1$.
We will prove that if 
$k\ges 0$ is an integer such that 
$\pi^{k+1}[L,L]\subseteq [M,M]$
then $\pi^{k}[L,L]\subseteq [M,M]$.
Since  $\pi^{2}[L,L]\subseteq [M,M]$,
see Remark \ref{rp2llmm},
the corollary follows by descending induction on $k$.
Let $k\ges 0$ be an integer such that 
$\pi^{k+1}[L,L]\subseteq [M,M]$.
We have $\pi^{k+1}L_j\subseteq [M,M]$ for $0<j<n$.
By applying Lemma \ref{lcal3b} in case $\lambda_0<n-1$,
and by applying Lemma \ref{lcal3d} in case $\lambda_0=n-1$,
we see that $\pi^{k+1}L_0\subseteq [M,M]$.
Hence, $\pi^{k+1}L\subseteq [M,M]$.
By Lemma \ref{lcal0} and descending induction on $m$,
$\pi^kL_m\subseteq [M,M]$ for $2\les m <n$.
By Lemma \ref{lcal1}, $\pi^kL_1\subseteq [M,M]$.
It follows that $\pi^k[L,L]\subseteq [M,M]$.\ep 
\end{enumerate}
\end{proof2}

\begin{corollary}
\label{cl2n}
Assume that 
$\lambda_1\ges 2$. 
Then 
$[M,M]=[L,L]$.
\end{corollary}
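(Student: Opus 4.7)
The proof will proceed by descending induction on $k$, paralleling the case $\lambda_1=1$ of Corollary \ref{cl01}. Specifically, I plan to show that for every integer $k\ges 0$, the inclusion $\pi^{k+1}[L,L]\subseteq [M,M]$ implies $\pi^{k}[L,L]\subseteq [M,M]$. Combined with the base case $\pi^2[L,L]\subseteq [M,M]$ supplied by Remark \ref{rp2llmm}, this yields $[L,L]\subseteq [M,M]$; since the reverse inclusion is trivial, the corollary follows.

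To execute the inductive step, fix $k\ges 0$ with $\pi^{k+1}[L,L]\subseteq [M,M]$. By Lemma \ref{lLL}, this already gives $\pi^{k+1}L_j\subseteq [M,M]$ for every $j$ with $0<j<n$. The first task is to upgrade this to $\pi^{k+1}L\subseteq [M,M]$ by handling the degree-$0$ piece. I split the cases $2\les\lambda_1\les n-2$ and $\lambda_1=n-1$, applying Lemma \ref{lcal3a} in the first and Lemma \ref{lcal3ce} in the second (with $k+1\ges 1$ playing the role of the lemma's $k$); in either case the hypotheses $\pi^{k+1}L_j\subseteq[M,M]$ for $0<j<n$ are exactly what the inductive assumption supplies, so we obtain $\pi^{k+1}L_0\subseteq [M,M]$ and hence $\pi^{k+1}L\subseteq [M,M]$.

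With $\pi^{k+1}L\subseteq [M,M]$ in hand, I then bring the exponent down to $k$ on the graded pieces $L_m$ with $1\les m<n$ by applying Lemma \ref{lcal0} and performing a descending induction on $m$ from $n-1$ to $1$. At each stage the required hypotheses of that lemma are met, because we already know $\pi^{k+1}L\subseteq [M,M]$ together with $\pi^{k}L_j\subseteq [M,M]$ for the indices $m<j<n$ that have been treated in previous stages. Crucially, the excluded configuration $(\lambda_1,m)=(1,1)$ never occurs since $\lambda_1\ges 2$, so Lemma \ref{lcal0} is available uniformly for all $1\les m<n$. Together with $\pi^{k+1}L_0\subseteq [M,M]$ this yields $\pi^k[L,L]\subseteq [M,M]$ via Lemma \ref{lLL}, closing the induction.

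The only real subtlety in the plan is the ordering: one must first drop $L_0$ to exponent $k+1$ before invoking Lemma \ref{lcal0} on the higher graded pieces, because that lemma needs $\pi^{k+1}L\subseteq [M,M]$ as input. Beyond this organizational point the argument is actually cleaner than in Corollary \ref{cl01}, since the special case $m=1$ that required the dedicated Lemma \ref{lcal1} there is now subsumed by Lemma \ref{lcal0}, thanks to the hypothesis $\lambda_1\ges 2$.
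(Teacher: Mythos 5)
Your proposal is correct and follows essentially the same route as the paper's proof: descending induction on $k$ starting from $\pi^2[L,L]\subseteq[M,M]$, using Lemma \ref{lcal3a} (for $\lambda_1<n-1$) and Lemma \ref{lcal3ce} (for $\lambda_1=n-1$) to handle $\pi^{k+1}L_0$, and then Lemma \ref{lcal0} with descending induction on $m$ for the pieces $L_m$, $1\les m<n$. The remarks about the ordering of steps and the non-occurrence of $(\lambda_1,m)=(1,1)$ are accurate and match the paper's argument.
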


\begin{proof}
Observe that $n\ges 3$.
We will prove that if 
$k\ges 0$ is an integer such that 
$\pi^{k+1}[L,L]\subseteq [M,M]$
then $\pi^{k}[L,L]\subseteq [M,M]$.
Since  $\pi^{2}[L,L]\subseteq [M,M]$,
see Remark \ref{rp2llmm},
the corollary follows by descending induction on $k$.
Let $k\ges 0$ be an integer such that 
$\pi^{k+1}[L,L]\subseteq [M,M]$.
We have $\pi^{k+1}L_j\subseteq [M,M]$ for $0<j<n$.
By applying Lemma \ref{lcal3a} in case $\lambda_1<n-1$,
and by applying Lemma \ref{lcal3ce} in case $\lambda_1=n-1$,
we see that $\pi^{k+1}L_0\subseteq [M,M]$.
Hence, $\pi^{k+1}L\subseteq [M,M]$.
By Lemma \ref{lcal0} and descending induction on $m$,
$\pi^kL_m\subseteq [M,M]$ for $1\les m <n$.
It follows that $\pi^k[L,L]\subseteq [M,M]$.
\end{proof}

\subsection{Proof of the theorems}
\label{proofCDEss}

\textbf{Proof of Theorem \ref{tMMLLZ_p.2}}.
The theorem follows from 
Remark \ref{rf2noO_K}, Lemma \ref{lnotO_KNNLL}, 
and Theorem \ref{tMMLLO_K}. \ep

\bigskip

\noindent
\textbf{Proof of Theorem \ref{taim2.2}}.
Denote $L=sl_1(\Delta)$.
In case $n=1$ we have $L=\{0\}$, and the result is trivial. 
Assume that $n\ges 2$, in which case $L$ is just infinite
as a $\bb{Z}_p$-Lie lattice
(Remark \ref{rsimple}) and $\mr{dim}_{\bb{Z}_p}(L)\ges 3$.
In case $f\ges 2$, the result follows from 
Theorem \ref{tMMLLZ_p.2} and Lemma \ref{lMMLLnss2}. 
Assume that $(p,n)\not\in\{(2,2),(3,3)\}$. 
In case $n=2$ we have $p\neq 2$, and the result follows from Theorem \ref{tn2}. 
In case $n\ges 3$ we have $(p,n)\neq (3,3)$,
and the result follows from Theorem \ref{tMMLLZ_p.2} and  Lemma \ref{lMMLLnss2}.
\ep

\bigskip

\noindent 
\textbf{Proof of Theorem \ref{tmainalt}}.
For $n=1$, the group $G:=SL_1^{mne}(D)$ is trivial,
and the statements are true.
Assume that $n\ges 2$, and note that $p\ges 3$.
By \cite[Proposition 4.3 and Proposition 4.4]{Ers2CohN1_arxiv},
for $l\ges ne$,
$SL_1^l(D)$ is a uniform pro-$p$ group
and its associated powerful $\bb{Z}_p$-Lie lattice 
is $sl_1^l(\Delta) := sl_1(\Delta)\cap \mfr{p}^l$.
Since $sl_1^{mne}(\Delta)= p^msl_1(\Delta)$, 
the powerful $\bb{Z}_p$-Lie lattice associated with $G$ is $L_G= p^msl_1(\Delta)$.
Now, item (\ref{tmainalt1}) follows from \cite[Proposition A]{NS2019} and Theorem \ref{taim2.2}. 
For item (\ref{tmainalt2}), observe that $n\ges 3$ and $p\ges 11$;
moreover, since $G$ is a finitely generated pro-$p$ group,
$H$ has index $p$ in $G$.
By \cite[Theorem 3.1]{NS2019}, $H$ is saturable and the index of $L_H$
in $L_G$ is $p$, where $L_H$ is the $\bb{Z}_p$-Lie lattice associated
with $H$.
Also, by \cite[Theorem B]{GSpsat}, $[L_G,L_G]=[G,G]$ and $[L_H, L_H]= [H,H]$.
Observe that $p^{-m}L_H$ is a submodule of $p^{-m}L_G= sl_1(\Delta)$
of index $p$.
By Theorem \ref{tMMLLZ_p.2}, $[p^{-m}L_H, p^{-m}L_H]=[sl_1(\Delta),sl_1(\Delta)]$,
hence, $[L_H, L_H]=[L_G, L_G]$. The desired $[H,H]=[G,G]$ follows.\ep


\section{Proof of Theorem  \ref{tshssL}, Theorem \ref{conjE}, and Theorem \ref{thBext}}
\label{proofGHI}

The `only if' part of Theorem \ref{tshssL} follows from \cite[Theorem 2.34]{NSGGD22} and
Theorem \ref{tunsnshss} below. 
The `if' part follows from \cite[Remark 2.35, Proposition 2.40]{NSGGD22}. 

\begin{theorem}
\label{tunsnshss}
Assume that $p\ges 5$, and let $L$ be an unsolvable $\bb{Z}_p$-Lie lattice.
Then $L$ is not strongly hereditarily self-similar of index $p$.
\end{theorem}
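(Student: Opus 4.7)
The approach is to exhibit a nonzero subalgebra $M \subseteq L$ that fails to be self-similar of index $p$; since strong hereditary self-similarity of index $p$ passes to subalgebras, this will force $L$ not to be strongly hereditarily self-similar of index $p$, as required.

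First I would tensor to $\bb{Q}_p$: because $L$ is unsolvable, the $\bb{Q}_p$-Lie algebra $\cl{L}:=L\otimes_{\bb{Z}_p}\bb{Q}_p$ is unsolvable as well. By the Levi--Malcev decomposition (a classical fact in characteristic zero), $\cl{L}$ admits a nonzero semisimple Lie subalgebra, which splits as a direct sum of simple $\bb{Q}_p$-Lie ideals; pick one such ideal $\cl{S}$. The centroid $K$ of $\cl{S}$ is a finite extension of $\bb{Q}_p$, making $\cl{S}$ a central simple $K$-Lie algebra. Now I invoke the classification of simple Lie algebras over $p$-adic fields recalled in the introduction: this produces inside $\cl{L}$ a $\bb{Q}_p$-Lie subalgebra isomorphic to $sl_1(D)$ for some central division $K$-algebra $D$ of index $n\geq 2$.

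Next I would pass to the integral setting and apply Theorem \ref{taim2}. Choose a $\bb{Q}_p$-basis of this realization of $sl_1(D)$ inside $\cl{L}$ and rescale each basis vector by a common power of $p$ so that it lies in $L$; the $\bb{Z}_p$-subalgebra of $L$ generated by the rescaled vectors is a full $\bb{Z}_p$-Lie sublattice of $sl_1(D)$. Since any two full $\bb{Z}_p$-lattices in the same $\bb{Q}_p$-vector space are commensurable, for $m$ sufficiently large the standard lattice $p^m sl_1(\Delta)$ (with $\Delta$ the maximal $\cl{O}_K$-order in $D$) is contained in this sublattice, and hence in $L$. Thus $p^m sl_1(\Delta)$ sits as a subalgebra of $L$, and since $p\geq 5$ the pair $(p,n)$ avoids both $(2,2)$ and $(3,3)$, so Theorem \ref{taim2} applies and yields that $p^m sl_1(\Delta)$ is not self-similar of index $p$, producing the desired subalgebra.

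The hardest step is the classification-driven reduction to an $sl_1(D)$-subalgebra. For an anisotropic simple summand $\cl{S}$ the cited classification produces $\cl{S}\cong sl_1(D)$ directly; the delicate cases are isotropic simple pieces, and notably the low-rank case $\mathfrak{sl}_2(\bb{Q}_p)$, which by a dimension count cannot contain any $sl_1(D)$ with $D$ a division algebra. Such cases will require extra work, either by extracting an anisotropic rank-one subalgebra from a higher-rank isotropic $\cl{S}$ via Borel--Tits structure theory, or by handling $\mathfrak{sl}_2(\bb{Q}_p)$-type lattices through a direct Lie-lattice calculation analogous to that of Section \ref{sn2}. I expect this classification-dependent step to be the principal obstacle in completing the argument.
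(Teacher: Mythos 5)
Your overall strategy --- Levi decomposition, passage to a simple $\bb{Q}_p$-subalgebra, realization of a scaled copy of $sl_1(\Delta)$ inside $L$, and an appeal to Theorem \ref{taim2} (legitimate since $p\ges 5$ rules out $(2,2)$ and $(3,3)$) --- coincides with the paper's argument in one of its two cases, and that part of your write-up, including the commensurability step placing $p^m sl_1(\Delta)$ inside $L$, is sound. The genuine gap is the one you flag yourself and then leave open: the classification \cite[Theorem (2), page 6]{Schoeneberg2014} identifies only \emph{anisotropic} absolutely simple $K$-Lie algebras with algebras $sl_1(D)$, so when the simple piece $\cl{S}$ is isotropic --- most pointedly $\cl{S}\simeq \mathfrak{sl}_2(\bb{Q}_p)$, which contains no $sl_1(D)$-subalgebra at all --- your reduction produces nothing, and neither of your proposed remedies is carried out. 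In particular, ``a direct Lie-lattice calculation analogous to Section \ref{sn2}'' does not transfer: the computation there hinges on a standard basis in which $-u_1u_2$ is a non-square modulo $\pi$, i.e.\ precisely on the non-split form; the split lattice $sl_2(\bb{Z}_p)$ is a different object and cannot be expected to fail self-similarity itself, which is why one must instead locate a suitable proper subalgebra of it. As written, your proposal proves the theorem only when some simple constituent of a Levi subalgebra is anisotropic.

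The paper closes the missing case with a different dichotomy: it splits according to whether the chosen simple subalgebra $\cl{M}$ contains a nonzero nilpotent element. If it does, the Jacobson--Morozov theorem \cite[Chapter VIII, Section 11, Proposition 2]{BouLieGrAlg3} produces an $sl_2$-triple, hence a copy of $sl_2(\bb{Z}_p)$ with $p^mM\subseteq L$ for some $m$, and \cite[Proposition 2.41]{NSGGD22} supplies a nonzero subalgebra of $p^mM$ that is not self-similar of index $p$. If it does not, then $\cl{M}$ is anisotropic by \cite[Lemma 3.2.1]{Schoeneberg2014}, and one is exactly in the case you handled, concluding with Theorem \ref{taim2}. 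To complete your argument you would need to supply the analogue of \cite[Proposition 2.41]{NSGGD22} for the split rank-one situation (or quote it), since the Borel--Tits route you sketch cannot manufacture an $sl_1(D)$-subalgebra inside $\mathfrak{sl}_2(\bb{Q}_p)$; note also that the paper's ``nilpotent element'' dichotomy, unlike your ``anisotropic summand'' one, requires no case analysis on the rank or the isotropy type of $\cl{S}$.
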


\begin{proof}
Observe that $d:=\mr{dim}(L) \ges 3$.
We will prove that there exists
a nonzero $\bb{Z}_p$-subalgebra $L_0$ of $L$ that is not self-similar of index $p$.
Let $\cl{L} = L\otimes_{\bb{Z}_p}\bb{Q}_p$, which is an unsolvable $\bb{Q}_p$-Lie
algebra of dimension $d$. 
By the Levi decomposition theorem 
\cite[page 91]{JacLieA} and by the unsolvability of $\cl{L}$,
there exists a nonzero semisimple $\bb{Q}_p$-subalgebra $\cl{L}_1$ of $\cl{L}$.
By the structure theorem of semisimple Lie algebras 
\cite[page 71]{JacLieA}, there exists a simple $\bb{Q}_p$-subalgebra $\cl{M}$ of $\cl{L}_1$. 
We divide the proof into two cases according whether $\cl{M}$ contains a nonzero
nilpotent element or not. 

First, we assume that $\cl{M}$ contains a nonzero nilpotent element $e$. 
By the Jacobson-Morozov theorem (see \cite[Chapter VIII, Section 11, Proposition 2]{BouLieGrAlg3}), 
the element $e$ may be completed to an $sl_2$-triple, that is,
there exist $f,h\in\cl{M}$ such that
the set $\{e,f,h\}$ is linearly independent over $\bb{Q}_p$,
$[h,e]=2e$, $[h,f]=-2f$, and $[e,f]=h$. Let ${M}$ be the $\bb{Z}_p$-subalgebra
of $\cl{L}$ generated by $e$, $f$, and $h$, and observe that $M$ is a $\bb{Z}_p$-Lie lattice
isomorphic to $sl_2(\bb{Z}_p)$; moreover, there exists $m\in\bb{N}$ such that $p^mM\subseteq L$.
By \cite[Proposition 2.41]{NSGGD22}, there exists a nonzero $\bb{Z}_p$-subalgebra $L_0$ of $p^mM$ which is not
self-similar of index $p$.

Second, we assume that $\cl{M}$ contains no nonzero nilpotent elements.
By \cite[Lemma 3.2.1]{Schoeneberg2014},
$\cl{M}$ is anisotropic;
hence,
by \cite[Theorem (2), page 6]{Schoeneberg2014},
$\cl{M}$ is isomorphic, as $\bb{Q}_p$-Lie algebra, 
to the derived algebra $[D,D]$
of a finite dimensional division $\bb{Q}_p$-algebra $D$. 
Let $K$ be the center of $D$. 
Hence, $K$ is a finite field extension of $\bb{Q}_p$,
and $D$ is a {central} division $K$-algebra.
Let $\cl{O}_K$ be the integral closure of $\bb{Z}_p$ in $K$,
and let $\Delta$ be the unique maximal $\cl{O}_K$-order in $D$.
Observe that, since $sl_1(D) = [D,D]$,
$sl_1(\Delta)$ is a $\bb{Z}_p$-Lie subalgebra of $[D,D]$.  
Let $M$ be the $\bb{Z}_p$-Lie lattice inside $\cl{M}$ that corresponds to $sl_1(\Delta)$
via an isomorphism $\cl{M}\rar sl_1(D)$. There exists $m\in\bb{N}$ such that
$p^mM\subseteq L$. By Theorem \ref{taim2.2}, we can take $L_0=p^mM$. 
\end{proof}

\bigskip

\noindent
\textbf{Proof of Theorem \ref{conjE}}.
We denote by $L^d(s)$ and $G^d(s)$ the Lie lattices and groups appearing in
item (2) of the statements of Theorem \ref{tshssL} and Theorem \ref{conjE}, respectively.
We proceed with the proof by first assuming that $G$ is isomorphic to one of the groups listed in the statement.
Then $G$ is solvable, and we can apply \cite[Theorem A]{NSGGD22} to deduce the desired result
(see also \cite[Proposition 3.7]{NSGGD22}).
Now assume that $G$ is
strongly hereditarily self-similar of index $p$. 
If $d\les 2$ then $G$ is solvable, and again we can apply \cite[Theorem A]{NSGGD22}.
We assume that $d\ges 3$ and observe that in this case $p\ges 5$.
By \cite[Remark 3.2]{NSGGD22}, we can apply 
\cite[Proposition 3.1]{NSGGD22}
and deduce that $G$ is saturable and the $\bb{Z}_p$-Lie lattice $L$ associated with $G$ is
strongly hereditarily self-similar of index $p$. 
By Theorem \ref{tshssL}, $L$ is isomorphic to either $\bb{Z}_p^d$ or to $L^d(s)$
for some $s\in\bb{N}$;
in fact, in the latter case we have $s\ges 1$, since $L$ is residually nilpotent
by \cite[Theorem B]{GSKpsdimJGT}. 
Then, since the group associated with $\bb{Z}_p^d$ (respectively, $L^d(s)$) is 
$\bb{Z}_p^d$ (respectively, $G^d(s)$), see \cite[Remark 3.6]{NSGGD22},
$G$ is isomorphic to either $\bb{Z}_p^d$ or $G^d(s)$, as desired.

\ep

\bigskip

\noindent
\textbf{Proof of Theorem \ref{thBext}}.
Denote $d=\mr{dim}(G)$, and consider the following property.
\begin{enumerate}
\setcounter{enumi}{-1}
\item 
\label{peqc.0}
$G\simeq \bb{Z}_p^d$,
or $d\ges2$ and there exists an integer $s\ges 1$ such that 
$G\simeq \bb{Z}_p \ltimes \bb{Z}_p^{d-1}$, where $\bb{Z}_p$
acts on $\bb{Z}_p^{d-1}$ by multiplication by $1+p^s$.
\end{enumerate}
The equivalences 
(\ref{peqc.0})$\Leftrightarrow$(\ref{peqc.1}),
(\ref{peqc.0})$\Leftrightarrow$(\ref{peqc.3}),
(\ref{peqc.0})$\Leftrightarrow$(\ref{peqc.5}), and
(\ref{peqc.0})$\Leftrightarrow$(\ref{peqc.6})
follow from 
Theorem \ref{conjE},
\cite[Corollary 2.4]{KSjalg11}, 
\cite[Corollary 1.13]{KSquart14}, and 
\cite[Theorem 1.2]{ST2020frattini},
respectively.
The implication 
(\ref{peqc.0})$\Rightarrow$(\ref{peqc.2})
is a consequence of \cite[Theorem A, Theorem B]{NSGGD22}
and the fact that the groups appearing in item (\ref{peqc.0}) are solvable.
The implication
(\ref{peqc.2})$\Rightarrow$(\ref{peqc.4})
follows from the Norm Residue Theorem (cf. \cite{HWnormresth}).
For $p=2$, the implication
(\ref{peqc.4})$\Rightarrow$(\ref{peqc.0}) 
follows from the assumption $p>d$, so that $d=1$ and $G\simeq\bb{Z}_2$; 
on the other hand, for $p\neq 2$, the implication follows from
\cite[Theorem B, Proposition 3.4]{Quad14}
and the fact that a $p$-adic analytic pro-$p$ group does not admit
a nonabelian free pro-$p$ subgroup.\ep


\begin{footnotesize}

\providecommand{\bysame}{\leavevmode\hbox to3em{\hrulefill}\thinspace}
\providecommand{\MR}{\relax\ifhmode\unskip\space\fi MR }
\providecommand{\MRhref}[2]{%
  \href{http://www.ams.org/mathscinet-getitem?mr=#1}{#2}
}
\providecommand{\href}[2]{#2}

\end{footnotesize}

\vspace{5mm}
\newpage

\noindent
Francesco Noseda\\
Mathematics Institute\\
Federal University of Rio de Janeiro\\
Av. Athos da Silveira Ramos, 149\\
21941-909, Rio de Janeiro, RJ\\
Brazil  \\
{\tt noseda@im.ufrj.br}\\

\noindent
Ilir Snopce \\
Mathematics Institute\\
Federal University of Rio de Janeiro\\
Av. Athos da Silveira Ramos, 149\\
21941-909, Rio de Janeiro, RJ\\
Brazil  \\
{\tt ilir@im.ufrj.br}\\

\noindent
\today
\end{document}